\documentclass[12pt,a4paper]{article}
\usepackage{amsthm}
\usepackage{amsmath}
\usepackage{amsfonts} 
\usepackage{enumerate}
\usepackage[shortlabels]{enumitem}
\usepackage{calc}
\usepackage[all]{xy}
\usepackage[colorlinks=true, citecolor=blue, linkcolor=red, pagebackref=true]{hyperref}
\usepackage{comment}

\AtBeginDocument{%
\setlength{\abovedisplayskip}{10pt plus 3pt minus 6pt}
\setlength{\belowdisplayskip}{10pt plus 3pt minus 6pt}
\setlength{\abovedisplayshortskip}{0pt plus 3pt}
\setlength{\belowdisplayshortskip}{4pt plus 3pt minus 4pt}
}

\theoremstyle{plain}
\newtheorem {lemma}{Lemma}[section]
\newtheorem {proposition}[lemma]{Proposition}
\newtheorem {theorem}[lemma]{Theorem}
\newtheorem {corollary}[lemma]{Corollary}

\theoremstyle{definition}
\newtheorem {definition}[lemma]{Definition}
\newtheorem {remark}[lemma]{Remark}
\newtheorem {example}[lemma]{Example}

\def\N{\mathbb N}
\newcommand{\Mat}{\operatorname{\mathbb{M}}}
\newcommand{\T}{\operatorname{T}}
\newcommand{\diag}{\operatorname{diag}}
\newcommand{\GL}{\operatorname{GL}}
\newcommand{\Gg}{\operatorname{G}}
\newcommand{\Ef}{\operatorname{F}}
\newcommand{\Ee}{\operatorname{E}}
\newcommand{\De}{\operatorname{D}}
\newcommand{\Pe}{\operatorname{P}}
\newcommand{\Ob}{\operatorname{Ob}}
\newcommand{\Hom}{\operatorname{Hom}}
\newcommand{\Mor}{\operatorname{Mor}}
\newcommand{\reg}{\operatorname{reg}}
\newcommand{\sink}{\operatorname{sink}}
\newcommand{\id}{\operatorname{id}}

\def\0{{\mathbf 0}}

\def\x{{\bf x}}

\def\b{{\bf b}}

\def\e{{\bf e}}
\def\B{{\mathcal{B}}}
\def\C{{\mathcal{C}}}

\def\E{{\mathcal{E}}}
\def\I{{\mathcal{I}}}

\def\Es{{\mathcal{S}}}
\def\V{{\mathcal{V}}}

\def\Ge{{\mathbf{G}}}
\def\F{{\mathbf{F}}}
\def\EE{{\mathbf{E}}}
\def\H{{\mathbf{H}}}
\def\P{{\mathbf{P}}}

\title{The general linear groupoid of a Leavitt path algebra}
\author{Raimund Preusser}
\date{}
\AtEndDocument{\bigskip{\footnotesize%
  \textsc{School of Mathematics and Statistics, Nanjing University of Information Science and Technology, Nanjing, China} \par  
  \textit{E-mail address:} \texttt{raimund.preusser@gmx.de} \par
}}

\begin{document}
\maketitle
\begin{abstract}
\noindent
The general linear groupoid $\Ge(L(E))$ of a Leavitt path algebra $L(E)$ is isomorphic to the groupoid whose objects are all $L(E)$-modules of the form $\bigoplus_{i=1}^n v_iL(E)$ where each $v_i$ is a vertex, and whose morphisms are all isomorphisms between these modules. We find a generating set for $\Ge(L(E))$ and consequently obtain generating sets for all general linear groups $\GL_n(L(E))$ over $L(E)$ (including the group $\GL_1(L(E))$ of invertible elements of $L(E)$). We prove similar results for Leavitt path algebras of hypergraphs (which generalise the Leavitt path algebras of separated graphs and vertex-weighted graphs).
\end{abstract}
\let\thefootnote\relax\footnotetext{\textit{2020 Mathematics Subject Classification.} 16S88, 20H25, 16U60.}
\let\thefootnote\relax\footnotetext{{\it Keywords and phrases.} Leavitt path algebras, general linear groups, invertible elements.}
%\tableofcontents

\section{Introduction}
Leavitt path algebras are algebras associated to graphs. They were introduced by Abrams and Aranda Pino \cite{AMP} and independently by Ara, Moreno and Pardo \cite{AA}. The definition of a Leavitt path algebra was inspired on the one hand by Leavitt's algebras $L(1,n)$ of module type $(1,n)$, and on the other hand by the graph $C^*$-algebras. The field of Leavitt path algebras is a very active research area and has connections to many other branches of mathematics,
such as functional analysis, symbolic dynamics, K-theory and noncommutative geometry. We refer the reader to the book \cite{lpabook} for more information about these algebras.

Despite the extensive literature on Leavitt path algebras, their invertible elements and general linear groups are poorly understood. In this paper, we find for every finite graph $E$ a generating set for the general linear groupoid $\Ge(L(E))$ of the Leavitt path algebra $L(E)$ with coefficients in a field $K$. The groupoid $\Ge(L(E))$ is isomorphic to the groupoid whose objects are all $L(E)$-modules of the form $\bigoplus_{i=1}^n v_iL(E)$ where each $v_i$ is a vertex, and whose morphisms are all isomorphisms between these modules. Since each general linear group $\GL_n(L(E))~(n\geq 1)$ is isomorphic to an isotropy group of $\Ge(L(E))$, we consequently obtain generating sets for all general linear groups over $L(E)$. 

We prove similar results for Leavitt path algebras of hypergraphs. These algebras were introduced in \cite{Raimund2}, they simultaneously generalise the Leavitt path algebras of separated graphs and vertex-weighted graphs. Leavitt path algebras of separated graphs were introduced by Ara and Goodearl \cite{aragoodearl} in an attempt to provide a class of algebras that generalises the usual Leavitt path algebras and covers all Leavitt algebras $L(m,n)$ (the usual Leavitt path algebras only cover the Leavitt algebras $L(1,n)$). With the same goal Hazrat defined weighted Leavitt path algebras \cite{H-1}. The Leavitt path algebras of vertex-weighted graphs form a subclass of the class of weighted Leavitt path algebras, which is still large enough to embrace the usual Leavitt path algebras as well as all algebras $L(m,n)$. Abrams and Hazrat demonstrated that Leavitt path algebras of vertex-weighted graphs are closely connected to sandpile models and chip-firing games \cite{AH}.

The rest of the paper is organised as follows.

In Section 2, we consider rings $R$ with a fixed complete set of orthogonal idempotents $\I=\{e^{(1)},\dots,e^{(p)}\}$. For $\sigma\in\I^n$ define the finitely generated projective $R$-module $R^{\sigma}=\bigoplus_{i=1}^n\sigma_iR$. If $\sigma\in\I^m$ and $\tau\in \I^n$, then the $R$-linear maps between $R^\sigma$ and $R^\tau$ are in 1-1 correspondence to matrices $A\in\Mat_{n\times m}(R)$ such that $A_{ij}\in\tau_iR\sigma_j$ for all $i$ and $j$. We denote the set of all such matrices by $\Mat_{\tau\times\sigma}(R)$.

We denote by $\GL_{\tau\times\sigma}(R)$ the set of all matrices in $\Mat_{\tau\times\sigma}(R)$ that define an isomorphism between $R^\sigma$ and $R^\tau$. The \textit{general linear groupoid} $\Ge(R)$ is the groupoid with $\Ob(\Ge(R))=\bigcup_{n\geq 1}\I^n$ and $\Hom_{\Ge(R)}(\sigma,\tau)=\GL_{\tau\times \sigma}(R)$ for all objects $\sigma$ and $\tau$ (composition of morphisms is matrix multiplication). The groups $\GL_\sigma(R)=\GL_{\sigma\times\sigma}(R)$ are the isotropy groups of $\Ge(R)$. They embrace the usual general linear groups of $R$. %The elementary and diagonal subgroups $\Ee_\sigma(R)$ and $\De_\sigma(R)$ are defined in the obvious way.

In Section 3, we prove a version of Bergman's result \cite[Corollary 2.15]{bergman74} for the case that $R_0$ is a finite product of skew fields. Namely we show the following. Let
\begin{itemize}
\item $R_0=K^{(1)}\times\dots \times K^{(p)}$ where $K^{(1)},\dots,K^{(p)}$ are skew fields,
\item $(R_\lambda)_{\lambda\in\Lambda}$ be a family of faithful $R_0$-rings,
\item $R$ be the coproduct of the $R_\lambda$ in the
category of $R_0$-rings.
\end{itemize}
Moreover, let $\Ge(R)$ be the general linear groupoid of $R$ with respect to $\I=\{e^{(1)},\dots,e^{(p)}\}$, where $e^{(j)}$ denotes the element of $R_0$ with $1$ in the $j$-th place and $0$'s elsewhere. Theorem \ref{thm:main} asserts that if Condition (P) in Definition \ref{def:condP} is satisfied, then $\Ge(R)$ is generated by the subgroupoids $\Ge(R,\mu)$ where $\mu$ ranges over $\Lambda\cup\{0\}$. Here $\Ge(R,\mu)$ denotes the subgroupoid of $\Ge(R)$ generated by the sets $\GL_{\tau\times\sigma}(R_\mu)$ and the $\mu$-based transvections (see Definition \ref{def:transvection}). We consequently obtain generating sets for all general linear groups $\GL_\sigma(R)$. 

In Section 4, we apply the main result from Section 3 to Leavitt path algebras $L(E)$ of finite graphs $E$ with coefficients in a field $K$. First we show that there are integers $m$ and $n$ such that $\Mat_n(L(E))\cong R_1\amalg_{R_0} R_2$ where $R_0=K^m$ and $R_1$ and $R_2$ are $R_0$-rings. Then we apply Theorem~\ref{thm:main} to obtain a generating set for the groupoid $\Ge(\Mat_n(L(E)))$. Now one can use the fact that there is a surjective functor $\Ge(\Mat_n(L(E)))\to \Ge(L(E))$, which is induced by the Morita functor $\operatorname{Mod}(\Mat_n(L(E)))\longrightarrow \operatorname{Mod}(L(E))$, to obtain a generating set for $\Ge(L(E))$ (here $\Ge(L(E))$ denotes the general linear groupoid of $L(E)$ with respect to the vertex set $E^0$). We consequently obtain generating sets for all general linear groups $\GL_\sigma(L(E))$. 

In Section 5, we apply the main result from Section 3 to Leavitt path algebras $L(H)$ of finite hypergraphs $H$ with coefficients in a field $K$. By doing so we obtain a generating set for the general linear groupoid $\Ge(L(H))$ (with respect to the vertex set $H^0$), and consequently generating sets for all general linear groups $\GL_\sigma(L(H))$. 

\section{Linear algebra over rings with a complete set of orthogonal idempotents}
In this paper, all rings will be associative and unital unless otherwise stated. All homomorphisms and modules will be assumed unital, and module will mean right module.

In this section, $R$ denotes a ring and $\I=\{e^{(1)},\dots,e^{(p)}\}$ a complete set of orthogonal idempotents in $R$ (i.e. $e^{(i)}e^{(j)}=\delta_{ij}e^{(i)}~(1\leq i,j\leq p)$ and $e^{(1)}+\dots +e^{(p)}=1$).
\subsection{Pseudo-free modules}

\begin{definition}\label{def:pseudobasis}
Let $M$ be an $R$-module. A finite subset $\B\subseteq M$ is called a \textit{pseudo-basis} for $M$ if for each $1\leq j\leq p$ there is a subset $\B_j\subseteq Me^{(j)}$ such that
\begin{enumerate}[(i)]
\item $\B$ is the disjoint union of the $\B_j$, and
\item every element $\x\in M$ can be uniquely written as a linear combination $\x=\sum_{b\in \B}\b r_{\b}$ where almost all $r_{\b}$ are zero and $r_{\b}\in e^{(j)}R$ if $\b\in \B_j$. 
\end{enumerate}
If an $R$-module $M$ has a pseudo-basis, we say that $M$ is \textit{pseudo-free}.
\end{definition}

Example~\ref{ex:mat3} below shows that an $R$-module can have pseudo-bases with different cardinalities.

\begin{example}\label{ex:mat3}
Suppose that $R=\Mat_3(S)$ for some ring $S$, and let $e_{ij}~(1\leq i,j\leq 3)$ denote the standard matrix units in $R$. Clearly $e^{(1)}:=e_{11}$ and $e^{(2)}:=e_{22}+e_{33}$ form a complete set of orthogonal idempotents in $R$. Let $M=e^{(2)}R$. One checks easily that
\[\B=\{e_{22}+e_{33}\}\quad\text{ and }\quad \C=\{e_{21},e_{31}\}\]
\smallskip
are pseudo-bases for $M$ (here $\B_1=\emptyset$, $\B_2=\{e_{22}+e_{33}\}$, $\C_1=\{e_{21},e_{31}\}$ and $\C_2=\emptyset$).
\end{example}

\begin{definition}
A pseudo-basis is called \textit{ordered} if it is equipped with an ordering of its elements. Suppose $\B$ is an ordered pseudo-basis with elements $\b_1,\dots,\b_n$ (in this order) such that $\b_i\in \B_{j_i}~(1\leq i\leq n)$. Then $\sigma=(e^{(j_1)},\dots,e^{(j_n)})\in \I^n$ is called the \textit{signature} of $\B$.
\end{definition}

\begin{example}\label{ex:Rn}
Let $\sigma\in \I^n$ where $n\geq 1$. We denote by $R^\sigma$ the submodule 
\[R^\sigma=\{\x=(x_1,\dots,x_{n})^T\in R^{n}\mid x_i\in \sigma_i R \text{ for all }1\leq i\leq n\}\] of the free $R$-module $R^{n}$. Clearly \[\E=\{\e^{(1)},\e^{(2)},\dots,\e^{(n)}\}\]
is a pseudo-basis for $R^\sigma$, where for $1\leq i\leq n$, $\e^{(i)}\in R^\sigma$ is the vector whose $i$-th component is $\sigma_i$ and whose other components are $0$. We view $\E$ as ordered pseudo-basis with signature $\sigma$ in the obvious way.
\end{example}

We call the $R$-module $R^{\sigma}$ defined in Example~\ref{ex:Rn} the \textit{pseudo-free $R$-module of signature $\sigma$}. Note that $R^\sigma$ is a direct summand of the free $R$-module $R^{n}$. Hence $R^\sigma$ is a finitely generated projective $R$-module.

\subsection{Linear maps between pseudo-free modules}

\begin{definition}\label{def:Mat}
Let $\sigma\in \I^m$ and $\tau \in \I^n$. We denote by $\Mat_{\sigma\times \tau}(R)$ the set of all $m\times n$-matrices $A$ such that $A_{ij}\in \sigma_iR\tau_j$ for any $1\leq i\leq m$ and $1\leq j\leq n$.
\end{definition}
We define the set $\Es=\Es_\I:=\bigcup_{n\geq 1}\I^n$. %If $\sigma\in \I^n\subseteq \Es$, then we set $|\sigma|:=n$.
If $\sigma,\tau\in \Es$ and $A\in \Mat_{\tau\times \sigma}(R)$, then we denote by $\phi_A$ the $R$-linear map $R^\sigma\to R^\tau$ which maps $\x\mapsto A\x$.

\begin{lemma}\label{lem:linmap}
Let $\sigma,\tau\in \Es$ and $\phi:R^\sigma\to R^\tau$ an $R$-linear map. Then there is a unique matrix $A\in \Mat_{\tau\times \sigma}(R)$ such that $\phi=\phi_A$ (namely the matrix $A$ whose $j$-th column is $\phi(\e^{(j)})$).
%, where $\E=\{\e^{(1)},\e^{(2)},\dots,\e^{(\hat\m)}\}$ is the standard pseudo-basis for $R^{\m}$).
\end{lemma}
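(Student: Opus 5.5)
The plan is the standard argument that a linear map out of a free module is determined by the images of its basis vectors, adapted to the pseudo-basis $\E=\{\e^{(1)},\dots,\e^{(m)}\}$ of $R^\sigma$ from Example~\ref{ex:Rn}, where $\sigma\in\I^m$ and $\tau\in\I^n$. Existence and uniqueness will come from the decomposition $\x=\sum_j\e^{(j)}x_j$ for $\x\in R^\sigma$.

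\emph{Existence.} Let $A$ be the $n\times m$ matrix whose $j$-th column is $\phi(\e^{(j)})\in R^\tau$. First I check that $A\in\Mat_{\tau\times\sigma}(R)$. Since $\phi(\e^{(j)})\in R^\tau$, we have $A_{ij}\in\tau_iR$. Since $\e^{(j)}=\e^{(j)}\sigma_j$ (because $\sigma_j$ is idempotent) and $\phi$ is $R$-linear,
\[\phi(\e^{(j)})=\phi(\e^{(j)}\sigma_j)=\phi(\e^{(j)})\sigma_j,\]
so $A_{ij}=A_{ij}\sigma_j\in\tau_iR\sigma_j$. Next, for $\x=(x_1,\dots,x_m)^T\in R^\sigma$ we have $x_j=\sigma_jx_j$, hence $\x=\sum_j\e^{(j)}x_j$. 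By linearity,
\[\phi(\x)=\sum_j\phi(\e^{(j)})x_j=A\x=\phi_A(\x).\]
I also note that $\phi_A$ really maps $R^\sigma$ into $R^\tau$: the entries of $A\x$ lie in $\tau_iR$ because $A_{ij}\in\tau_iR$.

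\emph{Uniqueness.} Suppose $\phi_A=\phi_B$ with $A,B\in\Mat_{\tau\times\sigma}(R)$. Then $A\e^{(j)}=B\e^{(j)}$ for every $j$. Now $A\e^{(j)}$ is the $j$-th column of $A$ multiplied on the right by $\sigma_j$, and this equals the $j$-th column of $A$ itself, since $A_{ij}\sigma_j=A_{ij}$. The same holds for $B$, so $A$ and $B$ have the same columns and $A=B$.

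There is no real obstacle here. The only point needing care is the use of the idempotents $\sigma_j$, which ensures that the entries land in $\tau_iR\sigma_j$ and that evaluating at $\e^{(j)}$ recovers the columns exactly.
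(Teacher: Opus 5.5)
Your proposal is correct and is exactly the routine verification the paper has in mind (its proof just reads ``Straightforward''). You decompose $\x=\sum_j\e^{(j)}x_j$ and use the idempotents $\sigma_j$ to place the entries in $\tau_iR\sigma_j$ and to recover the columns as $A\e^{(j)}$, which handles both existence and uniqueness.
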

\begin{proof}
Straightforward.
\end{proof}

If $\B$ is an ordered pseudo-basis for $M$ with signature $\sigma\in\Es$, then the \textit{coordinate map}
\[[\cdot]_\B:M\to R^\sigma\]
which maps each $\x\in M$ to its coordinate vector $[\x]_\B$ with respect to $\B$ is an $R$-module isomorphism. Hence every nonzero pseudo-free $R$-module is isomorphic to $R^\sigma$ for some $\sigma\in \Es$, and therefore is finitely generated projective.

\begin{proposition}\label{prop:matrep}
Let $M$ and $N$ be $R$-modules. Let $\B$ be an ordered pseudo-basis for $M$ with signature $\sigma$, and $\C$ an ordered pseudo-basis for $N$ with signature $\tau$. Then for any $R$-linear map $\phi:M\to N$ there is a unique matrix $A\in \Mat_{\tau\times \sigma}(R)$ such that the diagram
\[
\xymatrix@C=1cm@R=1cm{
M\ar[r]^{\phi} \ar[d]_{[\cdot]_\B} & N \ar[d]^{[\cdot]_{\C}} \\
R^\sigma \ar[r]_{\phi_A} & R^\tau
}
\]
commutes (namely the matrix $A$ whose $j$-th column is $[\phi(\b^{(j)})]_\C$, where $\b^{(j)}$ is the $j$-th element of $\B$).
\end{proposition}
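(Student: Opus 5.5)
The plan is to reduce the statement to Lemma~\ref{lem:linmap} by transporting $\phi$ along the coordinate isomorphisms. The coordinate maps $[\cdot]_\B:M\to R^\sigma$ and $[\cdot]_\C:N\to R^\tau$ are $R$-module isomorphisms, as noted just before the proposition. Hence the composite
\[\psi:=[\cdot]_\C\circ\phi\circ[\cdot]_\B^{-1}:R^\sigma\to R^\tau\]
is $R$-linear. A matrix $A\in\Mat_{\tau\times\sigma}(R)$ makes the diagram commute, i.e.\ $\phi_A\circ[\cdot]_\B=[\cdot]_\C\circ\phi$, if and only if $\phi_A=\psi$, because $[\cdot]_\B$ is bijective. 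By Lemma~\ref{lem:linmap} there is exactly one $A\in\Mat_{\tau\times\sigma}(R)$ with $\phi_A=\psi$. This gives existence and uniqueness at once.

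To identify $A$ explicitly, I use that Lemma~\ref{lem:linmap} says the $j$-th column of $A$ is $\psi(\e^{(j)})$. So the remaining step is to check that $[\b^{(j)}]_\B=\e^{(j)}$. Suppose $\b^{(j)}\in\B_k$, so that $\sigma_j=e^{(k)}$. Since $\b^{(j)}\in Me^{(k)}$, we have $\b^{(j)}=\b^{(j)}e^{(k)}$. This is an expansion of $\b^{(j)}$ with coefficient $e^{(k)}\in e^{(k)}R$ at $\b^{(j)}$ and zeros elsewhere, which is admissible in Definition~\ref{def:pseudobasis}(ii). By uniqueness of that expansion, $[\b^{(j)}]_\B$ is the vector with $\sigma_j$ in position $j$ and $0$ elsewhere, i.e.\ $\e^{(j)}$. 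It follows that $\psi(\e^{(j)})=[\phi(\b^{(j)})]_\C$, which is the claimed description of the columns.

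I expect no real obstacle. The only point needing care is the coordinate identity $[\b^{(j)}]_\B=\e^{(j)}$, which depends on the idempotent conventions ($\b\in Me^{(k)}$ and coefficients in $e^{(k)}R$). I would also note that $[\phi(\b^{(j)})]_\C$ automatically has $i$-th entry in $\tau_iR\sigma_j$. Indeed, $\phi(\b^{(j)})=\phi(\b^{(j)})\sigma_j$, and the coordinate map is $R$-linear, so each entry lies in $\tau_iR\cdot\sigma_j$. This is consistent with $A\in\Mat_{\tau\times\sigma}(R)$, although Lemma~\ref{lem:linmap} already guarantees it.
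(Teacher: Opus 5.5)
Your proposal is correct and takes the same approach as the paper, whose proof is the one line ``the proposition follows from Lemma~\ref{lem:linmap}.'' You transport $\phi$ along the coordinate isomorphisms and apply that lemma, and your check that $[\b^{(j)}]_\B=\e^{(j)}$ supplies the small detail the paper leaves implicit for the explicit description of the columns.
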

\begin{proof}
The proposition follows from Lemma~\ref{lem:linmap}.
\end{proof}

The matrix $A$ in Proposition~\ref{prop:matrep} is called the \textit{matrix representation} of $\phi$ with respect to the pseudo-bases $\B$ and $\C$. 

\subsection{General linear groups and the general linear groupoid}

\begin{definition}
Let $\sigma,\tau\in \Es$. We call a matrix $A\in\Mat_{\sigma\times \tau}(R)$ \textit{invertible} if there is a matrix $B\in \Mat_{\tau\times \sigma}(R)$ such that 
\[AB=I_\sigma\quad\text{and}\quad BA=I_\tau\]
where for $\rho\in \I^n$, $I_\rho=\diag(\rho_1,\rho_2,\dots,\rho_{n})\in \Mat_{\rho\times \rho}(R)$. 
If $A$ is invertible, then we call the uniquely determined matrix $B$ with the property above the \textit{inverse} of $A$ and denote it by $A^{-1}$. We denote the subset of $\Mat_{\sigma\times \tau}(R)$ consisting of all invertible matrices by $\GL_{\sigma\times \tau}(R)$. Instead of $\GL_{\sigma\times \sigma}(R)$ we will usually write $\GL_{\sigma}(R)$. The group $\GL_{\sigma}(R)$ is called the \textit{general linear group} corresponding to $\sigma$. 
\end{definition}

The groups $\GL_{\sigma}(R)~(\sigma\in \Es)$ cover the usual general linear groups over $R$, as the proposition below shows.

\begin{proposition}\label{prop:iso}
If $\sigma=(\underbrace{e^{(1)},\dots,e^{(1)}}_{n\text{ terms}},\underbrace{e^{(2)},\dots,e^{(2)}}_{n\text{ terms}},\dots,\underbrace{e^{(p)},\dots,e^{(p)}}_{n\text{ terms}})\in \Es$, then $\GL_{\sigma}(R)\cong \GL_{n}(R)$.
\end{proposition}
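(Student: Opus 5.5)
The idea is to identify $R^\sigma$ with the free module $R^n$. Then $\GL_\sigma(R)\cong\operatorname{Aut}_R(R^\sigma)\cong\operatorname{Aut}_R(R^n)\cong\GL_n(R)$.

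First, observe that for any $\sigma,\tau\in\Es$, Lemma~\ref{lem:linmap} together with the uniqueness of the matrix gives $\phi_A\circ\phi_B=\phi_{AB}$ and $\phi_{I_\sigma}=\id_{R^\sigma}$. Hence a matrix $A\in\Mat_{\sigma\times\sigma}(R)$ is invertible in the above sense if and only if $\phi_A$ is an automorphism of $R^\sigma$. It follows that $A\mapsto\phi_A$ is a group isomorphism $\GL_\sigma(R)\to\operatorname{Aut}_R(R^\sigma)$. The standard analogue for free modules gives $\GL_n(R)\cong\operatorname{Aut}_R(R^n)$, via $A\mapsto(\x\mapsto A\x)$.

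Next, I would construct an $R$-module isomorphism $\psi:R^n\to R^\sigma$. Index the coordinates of $R^\sigma$ by pairs $(j,k)$ with $1\le j\le p$ and $1\le k\le n$, so that the $(j,k)$-th coordinate lies in $e^{(j)}R$. Define
\[\psi(x_1,\dots,x_n)^T=\bigl(e^{(j)}x_k\bigr)_{(j,k)}.\]
Its inverse is
\[\bigl(y_{(j,k)}\bigr)\mapsto\Bigl(\sum_{j=1}^p y_{(j,k)}\Bigr)_k.\]
Both maps are $R$-linear, since left multiplication commutes with the right action. The two compositions are identities: one uses $\sum_j e^{(j)}=1$, and the other uses orthogonality, which gives $e^{(j)}\sum_{j'}y_{(j',k)}=y_{(j,k)}$ because $y_{(j',k)}\in e^{(j')}R$.

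Conjugation by $\psi$ then gives $\operatorname{Aut}_R(R^\sigma)\cong\operatorname{Aut}_R(R^n)$, and composing the three isomorphisms proves the claim. At the matrix level, the isomorphism sends $A\in\GL_n(R)$ to the $\sigma\times\sigma$ matrix whose $((j,k),(j',k'))$ entry is $e^{(j)}A_{kk'}e^{(j')}$.

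There is no real obstacle here. The only care needed is the bookkeeping that invertibility in the groupoid sense (with respect to $I_\sigma$ rather than the identity matrix) matches bijectivity of $\phi_A$, and that step is handled by the uniqueness in Lemma~\ref{lem:linmap}.
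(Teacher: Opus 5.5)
Your proof is correct. Unwound, it produces exactly the isomorphism the paper uses. Your formula sending $A\in\GL_n(R)$ to the matrix with $((j,k),(j',k'))$ entry $e^{(j)}A_{kk'}e^{(j')}$ is the paper's map $B\mapsto(e^{(i)}Be^{(j)})_{i,j}$. In the other direction, for $A\in\GL_\sigma(R)$ the conjugate of $\phi_A$ by your module isomorphism $R^n\to R^\sigma$ has matrix $\sum_{i,j}{}^{(i}\!A^{j)}$, which is the paper's forward map.

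The difference lies only in the justification. The paper works purely at the matrix level: it writes down the two block maps and leaves the checks to the reader. Those checks are that summing the blocks is multiplicative, since the cross terms ${}^{(i}\!A^{j)}\,{}^{(k}\!B^{l)}$ with $j\neq k$ vanish because $e^{(j)}e^{(k)}=0$, and that the two maps are mutually inverse, using $\sum_i e^{(i)}=1$. The paper's version is shorter and gives the explicit formula immediately.

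Your route makes the reason for the statement visible, namely $R^\sigma\cong R^n$ as $R$-modules. It also gets the homomorphism property and bijectivity for free by transport of structure. The cost is first identifying $\GL_\sigma(R)$ with $\operatorname{Aut}_R(R^\sigma)$ through the uniqueness in Lemma~\ref{lem:linmap}, and you carry out that step correctly.
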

\begin{proof}
We can write each $A\in \GL_{\sigma}(R)$ in the form 
\[A=\begin{pmatrix}
^{(1}\!A^{1)}&\dots&^{(1}\!A^{p)}\\
\vdots&\ddots&\vdots\\
^{(p}\!A^{1)}&\dots&^{(p}\!A^{p)}
\end{pmatrix}\]
where each $^{(i}\!A^{j)}$ is an $n\times n$ matrix whose entries lie in $e^{(i)}Re^{(j)}$.
There is an isomorphism $\GL_{\sigma}(R)\to\GL_{n}(R)$ mapping 
\[A
\longmapsto
\sum_{i,j=1}^p{}^{(i}\!A^{j)}.
\]
Its inverse maps
\[B\longmapsto\begin{pmatrix}
e^{(1)}Be^{(1)}&\dots&e^{(1)}Be^{(p)}\\
\vdots&\ddots&\vdots\\
e^{(p)}Be^{(1)}&\dots&e^{(p)}Be^{(p)}
\end{pmatrix}.
\]
\end{proof}

\begin{definition}\label{def:catP}
We denote by $\P(R)=\P_\I(R)$ the category with
\begin{itemize}
\item $\Ob(\P(R))=\Es$,
\item $\Hom_{\P(R)}(\sigma,\tau)=\Mat_{\tau\times \sigma}(R)$ for all $\sigma,\tau\in \Es$,
\item composition of morphisms is matrix multiplication, and
\item the identity morphism of an object $\sigma$ is the matrix $I_\sigma$.
\end{itemize}
\end{definition}

\begin{remark}\label{rem:xi}
Let $\P'(R)$ be the category whose objects are the $R$-modules
$R^\sigma$ $(\sigma\in\Es)$ and whose morphisms are all $R$-module
homomorphisms between those modules. There is a canonical isomorphism
\[
\xi:\P(R)\longrightarrow \P'(R)
\]
of categories, sending an object $\sigma$ to the module $R^\sigma$
and a morphism $A\in\Mat_{\tau\times\sigma}(R)$ to the left
multiplication map $\phi_A:R^\sigma\to R^\tau$ defined by $\phi_A(\x)=A\x$.
\end{remark}

Recall that a \textit{groupoid} is a small category in which every morphism is invertible. We will often identify a groupoid with the set of its morphisms. 

\begin{definition}
The subcategory $\Ge(R)=\Ge_\I(R)$ of $\P(R)$ such that 
\begin{itemize}
\item $\Ob(\Ge(R))=\Ob(\P(R))=\Es$,
\item $\Hom_{\Ge(R)}(\sigma,\tau)=\GL_{\tau\times \sigma}(R)$ for all $\sigma,\tau\in \Es$
\end{itemize}
is called the \textit{general linear groupoid of $R$ (with respect to $\I$)}.
\end{definition}

\begin{remark}\label{Gcomp}
Suppose the objects $\sigma,\tau\in \Es$ lie in the same connected component of $\Ge(R)$, i.e. there exists an $A\in\GL_{\tau\times \sigma}(R)$. Then the map 
\begin{align*}
\GL_\sigma(R)&\to\GL_\tau(R)\\
B&\mapsto ABA^{-1}
\end{align*}
is a group isomorphism. Hence all general linear groups corresponding to the objects of a connected component of $\Ge(R)$ are isomorphic.
\end{remark}

\subsection{Subgroups of general linear groups}

\begin{definition}
Let $\sigma\in \Es$. The subgroup $\De_{\sigma}(R)$ of $\GL_{\sigma}(R)$ consisting
of all diagonal matrices in $\GL_{\sigma}(R)$ (i.e.\ matrices $A$ with $A_{ij}=0$ for $i\neq j$)
is called the \textit{diagonal subgroup}.
\end{definition}

\begin{definition}
Let $\sigma\in \I^n$. Moreover, let $1\le i,j\le n$ with $i\neq j$, and $x\in \sigma_i R \sigma_j$. Then the matrix 
\[E_{ij}(x)=I_\sigma + e_{ij}(x)\in\GL_{\sigma}(R),\]
where $e_{ij}(x)$ is the $n\times n$ matrix whose entry at position $(i,j)$ is $x$ and all other entries are $0$, is called an \textit{elementary matrix}.
The subgroup $\Ee_{\sigma}(R)$ of $\GL_{\sigma}(R)$ generated by all elementary matrices is called the \textit{elementary subgroup}.
\end{definition}

\begin{definition}\label{def:transvection}
Let $\sigma\in \Es$. If $A\in \Mat_{\sigma\times \tau}(R)$, $B\in \Mat_{\tau\times \tau}(R)$, $C\in \Mat_{\tau\times \sigma}(R)$ for some $\tau\in \Es$, and $CA=0$, then the matrix
\[I_\sigma+ABC\in \GL_{\sigma}(R)\]
is called a \textit{transvection}. We denote by $\T_\sigma(R)$ the subgroup of $\GL_\sigma(R)$ generated by all transvections.
\end{definition}

%Note that $(E_{ij}(x))^{-1}=E_{ij}(-x)$ and $(I_\sigma+ABC)^{-1}=I_\sigma-ABC$.

\begin{definition}
Suppose that $R$ is a $K$-algebra over a field $K$. For $\sigma,\tau\in\Es$ we denote
\begin{itemize}
\item by $\Mat_{\sigma\times\tau}(K)$ the set consisting of all $A\in\Mat_{\sigma\times \tau}(R)$ such that $A_{ij}\in \sigma_iK\tau_j$ for all $i,j$,
\item by $\GL_{\sigma\times\tau}(K)$ the set
      $\GL_{\sigma\times\tau}(R)\cap\Mat_{\sigma\times \tau}(K)$,
\item by $\GL_{\sigma}(K)$ the group $\GL_{\sigma}(R)\cap \Mat_{\sigma\times\sigma}(K)$,
\item by $\De_{\sigma}(K)$ the group $\De_{\sigma}(R)\cap \Mat_{\sigma\times\sigma}(K)$,
\item by $\Ee_{\sigma}(K)$ the subgroup of $\GL_{\sigma}(K)\cap\Ee_{\sigma}(R)$
generated by all elementary matrices $E_{ij}(x)$ with
$x\in \sigma_i K \sigma_j$.
\end{itemize}
\end{definition}

%\begin{remark}\label{rem:block}
%Note that $\sigma_i K \sigma_j$ is nonzero only when $\sigma_i=\sigma_j$. Hence every %$A\in\GL_{\sigma}(K)$ is block‑diagonal with respect to the blocks determined by $\sigma$. Thus %$\GL_\sigma(K)$ is isomorphic to $\GL_{n_1}(K)\times\dots\times \GL_{n_p}(K)$.
%\end{remark}

\begin{definition}
Let $\sigma,\tau\in \I^n$ and $\pi$ a permutation such that $\sigma_i=\tau_{\pi(i)}$ for all $1\leq i\leq n$. A matrix $P\in \GL_{\sigma\times\tau}(K)$ is called a \textit{(generalised) permutation matrix} if there are $k_1,\dots,k_n\in K^\times$ such that 
\[P_{ij}=\delta_{j,\pi(i)}k_i\sigma_i\]
for all $1\leq i,j\leq n$. We denote the set of all permutation matrices in $\GL_{\sigma\times\tau}(K)$ by $\Pe_{\sigma\times\tau}(K)$. 
\end{definition}

Note that if $\sigma=\tau$ and $\pi=\id$, then $\Pe_{\sigma\times\tau}(K)=\De_\sigma(K)$.

\begin{definition}\label{def:ordered}
We call an element $\sigma\in \Es$ \textit{ordered} if it has the form 
\[\sigma=(\underbrace{e^{(1)},\dots,e^{(1)}}_{n^\sigma_1\text{ terms}},\underbrace{e^{(2)},\dots,e^{(2)}}_{n^\sigma_2\text{ terms}},\dots,\underbrace{e^{(p)},\dots,e^{(p)}}_{n^\sigma_p\text{ terms}})\]
where $n^\sigma_1,\dots,n^\sigma_p$ are nonnegative integers.
\end{definition}

Let $\Ge$ be a groupoid and $S\subseteq\Mor(\Ge)$ a set of morphisms. Recall that the subgroupoid $\H$ of $\Ge$ such that the objects of $\H$ are all objects of $\Ge$ that appear as the source or target of a morphism in $S$, and the morphisms of $\H$ are all finite compositions of morphisms from $S\cup S^{-1}$ is called the \textit{subgroupoid of $\Ge$ generated by $S$}.

\begin{lemma}\label{lem:GE}
Suppose that $R$ is a $K$-algebra over a field $K$, and let $\sigma\in \Es$. Then $\GL_{\sigma}(K)$ is contained in the subgroupoid of $\Ge(R)$ generated by the sets $\Ee_{\kappa}(K)$ and $\Pe_{\kappa\times\lambda}(K)$ where $\kappa,\lambda\in\Es$.
\end{lemma}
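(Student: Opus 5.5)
The idea is to reduce to the ordered case with a permutation matrix, and then to Gaussian elimination over $K$ inside each block. Here $\sigma_iK\sigma_j$ means the set of $\sigma_i k\sigma_j$ with $k\in K$. Since $K$ is central and the idempotents are orthogonal, this set is $0$ unless $\sigma_i=\sigma_j$. So every $A\in\GL_\sigma(K)$ satisfies $A_{ij}=0$ whenever $\sigma_i\neq\sigma_j$.

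The reduction step is as follows. Choose a permutation $\pi$ that sorts $\sigma$, giving an ordered $\rho$ (in the sense of Definition~\ref{def:ordered}) with $\sigma_i=\rho_{\pi(i)}$. Let $P\in\Pe_{\sigma\times\rho}(K)$ be the permutation matrix with $P_{ij}=\delta_{j,\pi(i)}\sigma_i$. Then $B:=P^{-1}AP\in\GL_\rho(K)$, and $B$ is block diagonal: $B=\diag(B_1,\dots,B_p)$, where the block $B_j$ is an $n^\rho_j\times n^\rho_j$ matrix with entries in $Ke^{(j)}$. It therefore suffices to show that $B$ lies in the subgroupoid generated by $\Ee_\rho(K)$ and $\De_\rho(K)=\Pe_{\rho\times\rho}(K)$. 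Then $A=PBP^{-1}$ is in the generated subgroupoid as well.

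To handle $B$, we need to know when $e^{(j)}\neq 0$ in $R$. Assume $e^{(j)}\neq0$; if $e^{(j)}=0$ the corresponding entries are all zero and that block is trivial. Then $Ke^{(j)}$ is a field isomorphic to $K$ via $k\mapsto ke^{(j)}$. This holds because $ke^{(j)}=0$ with $k\neq0$ would force $e^{(j)}=0$. Moreover $e^{(j)}$ is the identity of $Ke^{(j)}$.

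Now $B_j$ is invertible over the field $Ke^{(j)}$, because $B^{-1}\in\Mat_{\rho\times\rho}(R)$ restricts blockwise. The one subtlety is that the inverse of $B$ inside $\GL_\rho(R)$ need not have entries in $K$ a priori. We deal with it by using the $Ke^{(j)}$-module structure: $B_j$ defines an injective linear endomorphism of the finite-dimensional space $(Ke^{(j)})^{n_j}$. Injectivity follows because if $B_jx=0$ with $x\in(Ke^{(j)})^{n_j}$, then $x=B^{-1}Bx=0$. Hence $B_j\in\GL_{n_j}(Ke^{(j)})$.

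The classical fact that $\GL_n$ of a field is generated by elementary matrices and diagonal matrices (Gaussian elimination) then writes each $B_j$ as a product of elementary and invertible diagonal matrices over $Ke^{(j)}$. We embed these block by block into $\GL_\rho(K)$, padding the other blocks with identity blocks. Elementary matrices over $Ke^{(j)}$ become elements $E_{ab}(x)$ with $x\in\rho_aK\rho_b$, and diagonal matrices become elements of $\De_\rho(K)$. Multiplying over $j$ expresses $B$ as the required product.

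The main obstacle is the bookkeeping around the case $e^{(j)}=0$ and the identification $Ke^{(j)}\cong K$. The first must be handled so that permutation matrices still exist; their entries are then $0=\sigma_i$, which is harmless. Everything else is standard elimination.
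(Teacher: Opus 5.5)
Your proposal is correct and follows the paper's route. You conjugate by a permutation matrix to reach an ordered signature, use that $\sigma_iK\sigma_j=0$ unless $\sigma_i=\sigma_j$ to get a block-diagonal matrix, and then apply $\GL_n(K)=\De_n(K)\Ee_n(K)$ blockwise. Your injectivity argument and your treatment of $e^{(j)}=0$ fill in details the paper skips when it asserts $\GL_\sigma(K)\cong\GL_{n^\sigma_1}(K)\times\dots\times\GL_{n^\sigma_p}(K)$: the definition $\GL_\sigma(K)=\GL_\sigma(R)\cap\Mat_{\sigma\times\sigma}(K)$ does not a priori give an inverse with entries in $K$, and your argument shows each block is nonetheless invertible over $Ke^{(j)}$. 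One small correction: $\De_\rho(K)$ is only contained in $\Pe_{\rho\times\rho}(K)$, not equal to it, but containment is all you need.
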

\begin{proof}
Let $A\in \GL_{\sigma}(K)$. We may assume that $\sigma$ is ordered (conjugate $A$ by an appropriate permutation matrix). Note that $\sigma_i K \sigma_j$ is nonzero only when $\sigma_i=\sigma_j$. Hence $\GL_\sigma(K)$ is isomorphic to $\GL_{n^\sigma_1}(K)\times\dots\times \GL_{n^\sigma_p}(K)$. The lemma follows now from the well-known fact that $\GL_n(K)=\De_{n}(K)\Ee_{n}(K)$ for any positive integer $n$.
\end{proof}

\begin{definition}
We denote by $\T_{\sigma}(R,K)$ the subgroup of $\T_\sigma(R)$ generated by all transvections $I_\sigma+ABC$ where $A\in \Mat_{\sigma\times \tau}(K)$, $B\in \Mat_{\tau\times \tau}(R)$, $C\in \Mat_{\tau\times \sigma}(K)$ for some $\tau\in \Es$, and $CA=0$.
\end{definition}

\begin{lemma}\label{lem:transv}
Suppose that $R$ is a $K$-algebra over a field $K$, and let $\sigma\in \Es$. Then $\T_\sigma(R,K)$ is contained in the subgroupoid of $\Ge(R)$ generated by the sets $\Ee_{\kappa}(R)$ and $\Pe_{\kappa\times\lambda}(K)$ where $\kappa,\lambda\in\Es$.
\end{lemma}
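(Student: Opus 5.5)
The plan is to conjugate the transvection into a normal form over $K$ in which it visibly becomes a product of elementary matrices. The conjugating matrix will be a $K$-rational matrix, which lies in the allowed subgroupoid by Lemma~\ref{lem:GE}. It suffices to treat a single generator $T=I_\sigma+ABC$ with $A\in\Mat_{\sigma\times\tau}(K)$, $B\in\Mat_{\tau\times\tau}(R)$, $C\in\Mat_{\tau\times\sigma}(K)$ and $CA=0$.

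\emph{Step 1: rank normal form of $A$.} Since $\sigma_iK\tau_j=0$ unless $\sigma_i=\tau_j$, after permuting rows and columns the matrix $A$ is block diagonal. There is one block for each idempotent $e^{(k)}$, and each block is an ordinary matrix over $K$ (times $e^{(k)}$). Gaussian elimination over the field $K$, done type by type, gives $P\in\GL_{\sigma'\times\sigma}(K)$ and $Q\in\GL_{\tau\times\tau'}(K)$ for suitable reorderings $\sigma',\tau'$. They satisfy $A':=PAQ=\begin{pmatrix} I_\rho&0\\0&0\end{pmatrix}$, where $\rho$ is the common initial segment of $\sigma'$ and $\tau'$ of length $r$, and $r$ is the rank. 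Each of $P$ and $Q$ is a product of a permutation matrix and an element of some $\GL_\kappa(K)$. By Lemma~\ref{lem:GE}, $P$ lies in the subgroupoid generated by the $\Ee_\kappa(K)$ and the $\Pe_{\kappa\times\lambda}(K)$. Nothing needs to be said about $Q$, because it will cancel.

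\emph{Step 2: conjugate.} Set $B'=Q^{-1}BQ$ and $C'=Q^{-1}CP^{-1}$. Then
\[
PTP^{-1}=I_{\sigma'}+A'B'C', \qquad C'A'=Q^{-1}CAQ=0 .
\]
Since $A'$ is the identity on the first $r$ coordinates and zero elsewhere, the condition $C'A'=0$ says exactly that the first $r$ columns of $C'$ vanish. Also $A'B'C'$ is just the first $r$ rows of $B'C'$ padded with zeros. Hence $X:=A'B'C'\in\Mat_{\sigma'\times\sigma'}(R)$ is supported in rows $1,\dots,r$ and columns $r+1,\dots,n$.

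\emph{Step 3: factor into elementary matrices.} For a matrix $X$ with this strictly block upper triangular support we have $e_{ij}(x)e_{kl}(y)=0$ whenever $i,k\le r<j,l$. So
\[
I_{\sigma'}+X=\prod_{i\le r<j}E_{ij}(X_{ij}),
\]
with each $X_{ij}\in\sigma'_iR\sigma'_j$ and $i\neq j$. This product lies in $\Ee_{\sigma'}(R)$, and therefore $T=P^{-1}(I_{\sigma'}+X)P$ lies in the required subgroupoid.

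The main obstacle is the bookkeeping in Step 1. The signatures are not ordered, so the reduction to normal form has to be done through permutation matrices between different objects $\sigma,\sigma'$ of the groupoid. One must also check that the resulting $P$ and $Q$ have entries in the correct corners $\sigma'_iK\sigma_j$. The key observation that makes this work is that only $P$, and not $Q$, needs to lie in the generated subgroupoid.
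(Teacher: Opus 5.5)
Your proof is correct and is essentially the paper's argument. In both, one conjugates by a $K$-rational matrix that puts $A$ into echelon form, uses $CA=0$ to kill the pivot columns of $C$, and reads off the result as a product of elementary matrices supported in pivot rows and non-pivot columns; the only differences are cosmetic, namely that you use the two-sided rank normal form (absorbing $Q$ into $B$) and invoke Lemma~\ref{lem:GE} for $P$, whereas the paper uses one-sided row reduction by $E\in\Ee_\sigma(K)$ after reducing to ordered signatures via permutation matrices.
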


\begin{proof}
Let $A\in \Mat_{\sigma\times \tau}(K)$, $B\in \Mat_{\tau\times \tau}(R)$, $C\in \Mat_{\tau\times \sigma}(K)$ for some $\tau\in \Es$, and $CA=0$. We have to show that the transvection $I_\sigma+ABC$ lies in the subgroupoid $\Ge(R)$ generated by the sets $\Ee_{\kappa}(R)$ and $\Pe_{\kappa\times\lambda}(K)$ where $\kappa,\lambda\in\Es$. Clearly we may assume that $\sigma$ and $\tau$ are ordered. 

Since \(\sigma_i K \tau_j\) (respectively \(\tau_j K \sigma_i\)) is nonzero only when
\(\sigma_i=\tau_j\), we can write
\[
A=\operatorname{diag}(A_1,\dots,A_p),\qquad
C=\operatorname{diag}(C_1,\dots,C_p),
\]
where $A_s\in\Mat_{n^\sigma_s\times n^\tau_s}(K)$ and $C_s\in\Mat_{n^\tau_s\times n^\sigma_s}(K)$ for each $s$. The condition $CA=0$ is equivalent to $C_sA_s=0$ for all $s$. 

For each $s$, choose an $E_s\in\Ee_{n^\sigma_s}(K)$ such that $E_sA_s$ is in row echelon form with $r_s$ pivot elements. Hence  the last $n^\sigma_s-r_s$ rows of $E_sA_s$ are zero. Since $(C_sE_s^{-1})(E_sA_s)=C_sA_s=0$, it follows that the first $r_s$ columns of $C_sE_s^{-1}$ are zero. Let
\[
E=\operatorname{diag}(E_1,\dots,E_p)\in\Ee_\sigma(K).
\]
In order to prove the assertion of the lemma, it suffices to prove that
\[
E(I_\sigma+ABC)E^{-1}=I_\sigma+(EA)B(CE^{-1})
\]
belongs to $\Ee_\sigma(R)$. Clearly 
\[EA=\diag(E_1A_1,\dots,E_pA_p),\qquad CE^{-1}=\diag(C_1E_1^{-1},\dots,C_pE_p^{-1}).\]
Since the last $n^\sigma_s-r_s$ rows of each $E_sA_s$ as well as the first $r_s$ columns of each $C_sE_s^{-1}$ are zero, it follows that $I_\sigma+(EA)B(CE^{-1})$ has the form
\[
\left(
\begin{array}{cc|cc|c|cc}

I_{r_1} & *& 0 & * & \dots & 0&* \\
0 & I_{n^\sigma_1-r_1} & 0 & 0 & \dots & 0&0 \\ \hline
0 & * &I_{r_2}& *  & \dots & 0 & * \\
0 & 0 & 0 &  I_{n^\sigma_2-r_2} & \dots&0 & 0 \\ \hline
\vdots& \vdots & \vdots& \vdots& \ddots & \vdots &\vdots \\\hline
0 & * &0& *  & \dots & I_{r_p} & * \\
0 & 0 & 0  &0&\dots&0&I_{n^\sigma_p-r_p}\\
\end{array}
\right).
\]
But such a matrix clearly lies in $\Ee_\sigma(R)$.
\end{proof}

\section{The general linear groupoid of a free product of rings over a finite product of skew fields}
Recall that given a ring $R_0$, an {\it $R_0$-ring} is a ring $R$ given with a homomorphism
$R_0 \to R$. An $R_0$-ring is called \textit{faithful} if the given homomorphism is an embedding. The $R_0$-rings form a category in which the maps are the ring homomorphisms
that form commutative triangles with the given maps from $R_0$.

In this section:
\begin{itemize}
\medskip
\item $R_0=K^{(1)}\times\dots \times K^{(p)}$ where $K^{(1)},\dots,K^{(p)}$ are skew fields,
\medskip
\item $(R_\lambda)_{\lambda\in\Lambda}$ is a family of faithful $R_0$-rings,
\medskip
\item $R$ denotes the coproduct of the $R_\lambda$ in the
category of $R_0$-rings.
\medskip 
\end{itemize}
\medskip
Note that by \cite[Proposition 2.1]{bergman74} we can identify each $R_\mu$ with its image in $R$. Moreover, if $M=\bigoplus M_\mu\otimes R$ is a standard $R$-module, then we can identify each $M_\mu$ with its image in $M$.

For $1\leq j\leq p$, we denote by $e^{(j)}$ the element of $R_0$ with $1$ in the $j$-th place and $0$'s elsewhere. Clearly the set $\I=\{e^{(1)},\dots,e^{(p)}\}$ is a complete set of orthogonal idempotents in $R_0$ as well as in each $R_\lambda$ and in $R$. As in the previous section, $\Es=\bigcup_{n\geq 1}\I^n$. Moreover, $\Ge(R)$ denotes the general linear groupoid of $R$ with respect to $\I$.

\begin{definition}\label{def:mutransvection}
Let \(\mu\in\Lambda\cup\{0\}\) and $\sigma\in \Es$. If $A\in \Mat_{\sigma\times \tau}(R_\mu)$, $B\in \Mat_{\tau\times \tau}(R)$, $C\in \Mat_{\tau\times \sigma}(R_\mu)$ where $\tau=(e^{(1)},\dots,e^{(p)})\in \Es$, and $CA=0$, then the transvection $I_\sigma+ABC\in \T_\sigma(R)$ is called \textit{$\mu$-based}. We denote by $\T_\sigma(R,\mu)$ the subgroup of $\T_\sigma(R)$ generated by all $\mu$-based transvections.
\end{definition}

\begin{definition}\label{def:G(R,mu)}
Let $\mu\in \Lambda\cup\{0\}$. We denote by $\Ge(R,\mu)$ the subgroupoid of $\Ge(R)$ generated by the sets $\GL_{\sigma\times \tau}(R_{\mu})$ and groups $\T_\sigma(R,\mu)$ where $\sigma,\tau\in\Es$.
\end{definition}

\begin{definition}\label{def:condP}
Suppose that for every $\mu\in\Lambda\cup\{0\}$, $R_\mu$-module $M_\mu$ and pseudo-free $R_\mu$-modules $M'_\mu$ and $M''_\mu$, we have
\begin{align*}
M_\mu \oplus M'_\mu \cong M''_\mu ~\Longrightarrow ~M_\mu \text{ is pseudo-free.}
\end{align*}
Then we say that \textit{Condition (P)} is satisfied.
\end{definition}

\begin{remark}\label{rem:condP}
Note that if for every $\mu\in\Lambda\cup\{0\}$, every finitely generated projective $R_\mu$-module is pseudo-free, then Condition (P) is satisfied.
\end{remark}

In the proof of Theorem \ref{thm:main} below, we follow the proof of \cite[Corollary 2.15]{bergman74}. But since we are assuming that $R_0$ is a \textit{product} of skew fields (not just a single one), the free transfers, where one splits off a summand $xR_{\mu_1}$ from some
$M_{\mu_1}$ and attaches $xR_{\mu_2}$ to $M_{\mu_2}$, must be replaced by \textit{basic} transfers, where one splits off a summand $x(e^{(j)}\!R_{\mu_1})$ from some $M_{\mu_1}$ and attaches $x(e^{(j)}R_{\mu_2})$ to $M_{\mu_2}$ (cf. \cite[\S 9]{bergman74}). Such a basic transfer corresponds to a change of pseudo-basis and hence to the action of a matrix in $\GL_{\pi\times \rho}(R_{\mu_1})$ for some $\pi,\rho\in \Es$.
\begin{theorem}\label{thm:main}
Suppose that Condition (P) is satisfied. Then the general linear groupoid $\Ge(R)$ is generated by the subgroupoids $\Ge(R,\mu)~(\mu\in \Lambda\cup\{0\})$.
\end{theorem}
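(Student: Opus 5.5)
We follow Bergman's proof of \cite[Corollary 2.15]{bergman74}. Fix $A\in\GL_{\tau\times\sigma}(R)$ and view it, via Remark~\ref{rem:xi} and Proposition~\ref{prop:matrep}, as an isomorphism $\phi_A:R^\sigma\to R^\tau$. Equivalently, $A$ is the matrix taking the standard pseudo-basis of $R^\sigma$ to a second ordered pseudo-basis $\{A\e^{(1)},\dots,A\e^{(m)}\}$ of $R^\tau$. The task is to pass from the standard pseudo-basis $\E$ of $R^\tau$ to this image basis by a finite chain of changes of pseudo-basis. Each change should have a matrix in some $\Ge(R,\mu)$.

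The main input is Bergman's structure theory for modules over coproducts. The pseudo-free module $R^\tau=\bigoplus_i \tau_iR$ carries the standard (induced) structure $\bigoplus_\mu M_\mu\otimes R$, where $M_0=\bigoplus\tau_iR_0$ and $M_\lambda=0$ for $\lambda\neq 0$. The image of $\E$ under $\phi_A$ gives a second standard structure. By \cite[Theorem 2.2, Corollary 2.15 and \S9]{bergman74}, applied in the version for $R_0$ a finite product of skew fields, any two standard structures on the same module are connected by a finite sequence of elementary moves of the following kinds:
\begin{enumerate}[(a)]
\item an isomorphism of one $M_\mu$ to another $R_\mu$-module;
\item a basic transfer, in which a summand $x(e^{(j)}R_{\mu_1})$ is split off $M_{\mu_1}$ and $x(e^{(j)}R_{\mu_2})$ is attached to $M_{\mu_2}$;
\item a transvection-type move, in which one adds to $M_\mu$ a map $M_\mu\to\bigoplus_{\nu\ne\mu}M_\nu\otimes R$ that factors through a one-dimensional piece $e^{(j)}R_0$.
\end{enumerate}
Condition (P) ensures that every $M_\mu$ occurring along the way is pseudo-free. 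Each $M_\mu$ is a direct summand of pseudo-free modules, since the basic transfers split off summands $xe^{(j)}R_\mu$ from a pseudo-free module and the complements are pseudo-free. So at every stage we may choose ordered pseudo-bases of all the $M_\mu$, and their union is a pseudo-basis of the $R$-module.

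Next we translate each move into matrices. A move of type (a) is a change of pseudo-basis inside a single $M_\mu$, so its matrix lies in $\GL_{\pi\times\rho}(R_\mu)$. A basic transfer is a change of pseudo-basis of $M_{\mu_1}$ that isolates the element $x\in M_{\mu_1}e^{(j)}$. This again lies in $\GL_{\pi\times\rho}(R_{\mu_1})$, followed by a relabelling, which is a permutation matrix in $\GL(R_0)\subseteq\Ge(R,0)$. A type (c) move is $I+ABC$ with $A$ and $C$ over $R_\mu$. Here $\tau=(e^{(1)},\dots,e^{(p)})$ records the one-dimensional piece through which the map factors, and $CA=0$ holds because the added map vanishes on the piece it comes from. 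This is exactly a $\mu$-based transvection as in Definition~\ref{def:mutransvection}, so it lies in $\T_\sigma(R,\mu)$. Composing along the chain writes $A$ as a product of morphisms in the $\Ge(R,\mu)$, up to conjugation by the matrices that fix the initial and final pseudo-bases. Those matrices are themselves of types (a) and (c).

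The main obstacle is the second step: redoing Bergman's reduction with $R_0$ semisimple rather than a skew field. One must check that the ``lengths'' Bergman uses to make the reduction terminate decrease under basic transfers, that the elements split off can always be taken inside $Me^{(j)}$, and that the resulting transvections genuinely factor through $\tau=(e^{(1)},\dots,e^{(p)})$, i.e.\ have rank one in each idempotent component. We would handle this by working separately with each idempotent $e^{(j)}$, using that $e^{(j)}R_0e^{(j)}=K^{(j)}$ is a skew field. This lets Bergman's one-dimensional arguments apply to each $Me^{(j)}$ as a $K^{(j)}$-space.
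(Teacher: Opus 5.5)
Your overall route is the paper's. You apply Bergman's reduction of a homomorphism of standard modules (his Theorem 2.3, in the form with basic transfers for $R_0$ a finite product of skew fields from \S 9 of his paper), and use Condition (P) to keep every component pseudo-free. You realise each basic transfer as a change of pseudo-basis in some $\GL_{\pi\times\rho}(R_{\mu_1})$ followed by a formal redistribution. You finish with an induced isomorphism onto the $0$-component of $R^\tau$, whose matrix lies in $\GL(R_0)$; this is a final factor, not a conjugation.

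The step that fails as written is your translation of transvections. In (c) you add to $M_\mu$ a map into $\bigoplus_{\nu\neq\mu}M_\nu\otimes R$, and then assert that its matrix is $I+ABC$ with $A$ and $C$ over $R_\mu$, i.e.\ a $\mu$-based transvection. For such a cross-component move the factor $A$ records an element of some $M_\nu$ with $\nu\neq\mu$. Its entries therefore lie in $R_\nu$, not in $R_\mu$, and the matrix is in general not $\mu$-based. What is true is that the matrix has the form $I+X$, where $X$ is supported on rows indexed by pseudo-basis elements of the $M_\nu$ ($\nu\neq\mu$) and on columns indexed by those of $M_\mu$. These index sets are disjoint, so $I+X=\prod E_{kl}(X_{kl})$ is a product of elementary matrices. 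Each elementary matrix is a $0$-based transvection. If $\sigma_k=e^{(a)}$ and $\sigma_l=e^{(b)}$, take $A$ with the single nonzero entry $e^{(a)}$ at $(k,a)$, $C$ with the single nonzero entry $e^{(b)}$ at $(b,l)$, and $B$ with $x$ at $(a,b)$. Then $ABC=e_{kl}(x)$, and $CA=0$ because $k\neq l$. So these moves land in $\Ge(R,0)$.

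The genuinely $\mu$-based transvections come from the case your list (c) omits: transvections whose two indices coincide, $\mu_1=\mu_2=\mu$. There the added map sends $M_\mu$ back into $M_\mu\otimes R$ through a copy of $R$. Written in a pseudo-basis of $M_\mu$, this is $I+ABC$ with $A$ and $C$ over $R_\mu$, and $CA=0$ gives invertibility with inverse $I-ABC$. Your justification (``the added map vanishes on the piece it comes from'') describes exactly this case. Once the two cases are separated as in the paper, the argument closes.

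Finally, the ``main obstacle'' you describe is not something you need to redo. The paper simply cites Bergman's theorem in the finite-product-of-skew-fields form. Moreover, $\tau=(e^{(1)},\dots,e^{(p)})$ in Definition~\ref{def:mutransvection} is just the signature of $R=R_0\otimes_{R_0}R$. Hence any transvection factoring through a copy of $R$, or through a single $e^{(j)}R$ after padding with zeros, is covered without a separate rank-one-per-idempotent analysis.
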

\begin{proof}
Let $A\in \GL_{\tau\times \sigma}(R)$ be a morphism in $\Ge(R)$. We have to show that $A$ is a composition of morphisms from the subgroupoids $\Ge(R,\mu)~(\mu\in \Lambda\cup\{0\})$. By \S 2.2, the matrix $A$ defines an $R$-module isomorphism $f:M\to N$ where $M=R^{\sigma}$ and $N=R^{\tau}$. 

We consider $M$ and $N$ as standard modules via $M=R^\sigma = R^\sigma_0\otimes_0 R$ and $N=R^\tau = R^\tau_0\otimes_0 R$, and apply \cite[Theorem 2.3]{bergman74} to $f$. Condition (P) ensures that at every step in the transformations of $M$ given by that
theorem, the ``components'' $M_\mu$ will all be pseudo-free.
Hence we can always keep a pseudo-basis of $M$ formed from pseudo-bases of the
current $M_\mu$.

When we perform a basic transfer, splitting off a summand $x(e^{(j)}\!R_{\mu_1})$, where $x\in M_{\mu_1}e^{(j)}$, from some $M_{\mu_1}$, and attaching $x(e^{(j)}R_{\mu_2})$ to $M_{\mu_2}$, let us precede this by changing our pseudo-basis of $M_{\mu_1}$ to one of the form $B' \cup \{x\}$, where $B'$ is a pseudo-basis of the submodule $M'_{\mu_1}$ complementing $x(e^{(j)}\!R_{\mu_1})$. This change of pseudo-basis corresponds to the action of an element of $\GL_{\pi\times \rho}(R_{\mu_1})$, where $\pi$ is the signature of the new pseudo-basis of $M$ and $\rho$ is the signature of the old one. The basic transfer itself then becomes a formal redistribution of the pseudo-basis elements among the $M_\mu$.

When we apply a transvection $\theta$, if the indices $\mu_1$ and $\mu_2$ involved
are distinct, then the matrix representing $\theta$ will be a product of elementary matrices, which are $0$-based transvections, while if $\mu_1=\mu_2$, it will be a ${\mu_1}$-based transvection.

\cite[Theorem 2.3]{bergman74} tells us that after being composed with these automorphisms,
$f$ yields an induced automorphism $f'$. In view of the structure of standard
module we chose for $N$, it follows that $f'$ is presented by a matrix from $\GL_{\tau}(R_{0})$.

It follows from the above, that the matrix $A$ is a finite product of matrices from the sets 
$\GL_{\pi\times \rho}(R_{\mu})$ and $\T_\pi(R,\mu)$ where $\mu\in \Lambda\cup\{0\}$ and $\pi,\rho\in\Es$. Thus $A$ is a composition of morphisms from the subgroupoids $\Ge(R,\mu)~(\mu\in \Lambda\cup\{0\})$.
\end{proof}

\begin{definition}\label{def:G'}
Let $\sigma,\tau\in\Es$. We denote by $\GL_{\sigma\times \tau}(R_\nu;~\nu\in\Lambda\cup\{0\})$ the subset of $\GL_{\sigma\times \tau}(R)$ consisting of all morphisms which are compositions of morphisms from the sets $\GL_{\pi\times \rho}(R_{\mu})~(\mu\in \Lambda\cup\{0\};~\pi,\rho\in\Es)$. Instead of $\GL_{\sigma\times \sigma}(R_\nu;~\nu\in\Lambda\cup\{0\})$ we may write $\GL_{\sigma}(R_\nu;~\nu\in\Lambda\cup\{0\})$
\end{definition}

%\begin{definition}\label{def:hatT}
%Let $\sigma\in \Es$. We denote by $T_\sigma(R)$ the subgroup of $\GL_{\sigma}(R)$ generated by %all matrices of the form $ABA^{-1}$ where $A\in\GL_{\sigma\times \tau}%(R_\nu;~\nu\in\Lambda\cup\{0\})$ and $B\in T_\tau(R,\mu)$ for some $\tau\in \Es$ and $\mu\in \Lambda\cup\{0\}$. 
%\end{definition}

\begin{corollary}\label{cor:main}
Let $\sigma\in\Es$ and suppose that Condition (P) is satisfied. Then every matrix $A\in\GL_\sigma(R)$ can be written as $A=BC$ where $B\in\GL_{\sigma}(R_\nu;~\nu\in\Lambda\cup\{0\})$ and $C\in \T_\sigma(R)$.
\end{corollary}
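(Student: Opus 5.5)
The plan is to apply Theorem~\ref{thm:main} to $A\in\GL_\sigma(R)$ and then move every transvection factor to the right by conjugation. By Theorem~\ref{thm:main}, $A$ is a composition
\[
A=X_1X_2\cdots X_k ,
\]
where each $X_i$ is either a matrix in $\GL_{\pi_{i-1}\times\pi_i}(R_\mu)$ for some $\mu$, or a $\mu$-based transvection lying in $\T_{\pi_i}(R,\mu)\subseteq\T_{\pi_i}(R)$. Here the objects are chained as $\sigma=\pi_0,\pi_1,\dots,\pi_k=\sigma$. Inverses of generators are again of the same type: the inverse of a matrix over $R_\mu$ is over $R_\mu$, and the inverse of a transvection $I+ABC$ is $I-ABC$. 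So I may assume the $X_i$ are generators and not inverses of generators.

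The key observation is that a conjugate of a transvection by an invertible matrix is again a transvection. Let $P\in\GL_{\rho\times\pi}(R)$ and let $I_\pi+ABC$ be a transvection with $CA=0$. Then
\[
P(I_\pi+ABC)P^{-1}=I_\rho+(PA)B(CP^{-1}),
\]
and $(CP^{-1})(PA)=CA=0$. Hence $P\,\T_\pi(R)\,P^{-1}=\T_\rho(R)$, and this holds for arbitrary invertible $P$ between possibly different objects. This is exactly why the corollary uses the full group $\T_\sigma(R)$ rather than the $\mu$-based subgroups, since the $\mu$-based property is not preserved under conjugation.

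With this, I run an induction that pushes transvections to the right. Write $A=X_1\cdots X_k$ and let $j$ be the smallest index such that $X_j$ is a transvection. Then
\[
X_jX_{j+1}\cdots X_k=(X_{j+1}\cdots X_k)\,\bigl((X_{j+1}\cdots X_k)^{-1}X_j(X_{j+1}\cdots X_k)\bigr).
\]
By the observation, the last factor lies in $\T_\sigma(R)$. Repeating this, or equivalently inducting on the number of transvection factors, gives
\[
A=(Y_1\cdots Y_m)\,C,
\]
where the $Y_l$ are the non-transvection factors in their original order and $C\in\T_\sigma(R)$ is a product of conjugated transvections. The product $B=Y_1\cdots Y_m$ is a composition of matrices from the sets $\GL_{\pi\times\rho}(R_\mu)$, it goes from $\sigma$ to $\sigma$, and it is invertible, so $B\in\GL_\sigma(R_\nu;~\nu\in\Lambda\cup\{0\})$. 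If $m=0$, take $B=I_\sigma\in\GL_\sigma(R_0)$.

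The only delicate point is the bookkeeping of objects, since the intermediate products go between different objects $\pi_i$. The conjugation identity above holds for rectangular invertible $P\in\GL_{\rho\times\pi}(R)$, so this causes no problem: each conjugated factor ends up in $\T_\sigma(R)$ because the conjugating product $X_{j+1}\cdots X_k$ lies in $\GL_{\pi_j\times\sigma}(R)$. No real obstacle is expected beyond this.
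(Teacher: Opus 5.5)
Your proposal is correct and takes the paper's approach. You write $A$ as a composition of factors from the sets $\GL_{\pi\times\rho}(R_\mu)$ and transvection groups using Theorem~\ref{thm:main}, then move every transvection to the right via the identity $P(I_\pi+ABC)P^{-1}=I_\rho+(PA)B(CP^{-1})$; the only difference is that you prove normalisation by arbitrary invertible $P$ rather than only by the sets $\GL_{\pi\times\rho}(R_\nu;~\nu\in\Lambda\cup\{0\})$, and you make explicit the object bookkeeping that the paper summarises with ``Clearly''.
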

\begin{proof}
Let $A\in\GL_\sigma(R)$. It follows from Theorem \ref{thm:main} that $A$ can be written as $A=A_1\dots A_n$ where each $A_i$ belongs to $\Gg:=\bigcup_{\pi,\rho\in \Es}\GL_{\pi\times \rho}(R_\nu;~\nu\in\Lambda\cup\{0\})$ or $\T:=\bigcup_{\tau\in \Es}\T_\tau(R)$. Clearly $\T$ is normalised by $\Gg$, i.e. $XYX^{-1}\in \T$ for every composable $X\in \Gg$ and $Y\in \T$. It follows that $A=BC$ for some $B\in\GL_{\pi\times\rho}(R_\nu;~\nu\in\Lambda\cup\{0\})$ and $C\in \T_\tau(R)$ where $\pi,\rho,\tau\in\Es$. Clearly $\pi$, $\rho$ and $\tau$ must be equal to $\sigma$ since $A\in\GL_\sigma(R)$. 
\end{proof}

\section{Applications to Leavitt path algebras of directed graphs}
Throughout this section $K$ denotes a fixed field.
\subsection{Leavitt path algebras of directed graphs}
\begin{definition}
A {\it (directed) graph} is a quadruple $E=(E^0,E^1,r,s)$ where $E^0$ and $E^1$ are sets and $r,s:E^1\rightarrow E^0$ maps. The elements of $E^0$ are called {\it vertices} and the elements of $E^1$ {\it edges}. %If $e$ is an edge, then $s(e)$ is called its {\it source} and $r(e)$ its {\it range}. %If $v$ is a vertex and $e$ an edge, we say that $v$ {\it emits} $e$ if $s(e)=v$, and $v$ {\it receives} $e$ if $r(e)=v$.
A vertex $v$ is called a {\it sink} if $|s^{-1}(v)|=0$, an {\it infinite emitter} if $|s^{-1}(v)|=\infty$, and {\it regular} if $0<|s^{-1}(v)|<\infty$. We denote the subset of $E^0$ consisting of all sinks (resp. regular vertices) by $E^0_{\sink}$ (resp. $E^0_{\reg}$). The graph $E$ is called {\it finite} if $E^0$ and $E^1$ are finite sets.
\end{definition}

\begin{definition}\label{def:lpa}
Let $E$ be a graph. The (associative, but not necessarily unital) $K$-algebra $L(E)$ presented by the generating set $\{v,e,e^*\mid v\in E^0,e\in E^1\}$ and the relations
\begin{enumerate}[(i)]
\item $uv=\delta_{uv}u\quad(u,v\in E^0)$,
\medskip
\item $s(e)e=e=er(e),~r(e)e^*=e^*=e^*s(e)\quad(e\in E^1)$,
\medskip
\item $e^*f= \delta_{ef}r(e)\quad(e,f\in E^1)$ and
\item $\sum_{e\in s^{-1}(v)}ee^*= v\quad(v\in E_{\reg}^0)$
\medskip
\end{enumerate}
is called the {\it Leavitt path algebra} of $E$. 
\end{definition}

Note that if $E$ is finite, then $L(E)$ is unital with $\sum_{v\in E^0}v=1$.

\subsection{Matrix rings over Leavitt path algebras as free products}

Until the end of Section 4, $E$ denotes a finite graph. We write 
\[E^0_{\reg}=\{v_1,\dots,v_p\},~~E^0_{\sink}=\{v_{p+1},\dots v_{q}\}~~\text{ and }~~ s^{-1}(v_i)=\{f_{i,1},\dots,f_{i,m_i}\}\text{ for }1\leq i\leq q\]
(hence $m_i=0$ if $i>p$). We define the $K$-algebra $S:=K^{E^0}$ and denote for any $v\in E^0$ the element of $S$ whose $v$-th component is $1$ and whose other components are $0$ by $\alpha_v$. For any $1\leq i\leq p$ we define the finitely generated projective $S$-modules $P_i:=\alpha_{v_i}S$ and $Q_i:=\bigoplus_{f\in s^{-1}(v_i)}\alpha_{r(f)}S$. Then 
\[L(E)\cong S\big\langle i_1,i_1^{-1}:\overline{P_{1}}\cong \overline{Q_{1}},\dots , i_p,i_p^{-1}:\overline{P_{p}}\cong \overline{Q_{p}}\big\rangle,\]
see \cite[\S 3.2]{lpabook}.

Let $n=|E^0|+|E^1|$. %Note that $n$ is large enough so that
%\[(\bigoplus_{v\in E_{\reg}} P_{v}\oplus Q_{v})\oplus (\bigoplus_{v\in E_{\sink}} vR)\]
%can be embedded as a proper direct summand in the free right $R$-module $R^n$ of rank $n$.
It follows from \cite[Theorem 4.1]{bergman74b} that
 \[\Mat_n(L(E))\cong R\big\langle i_1,i_1^{-1}:\overline{r_n(P_{1})}\cong \overline{r_n(Q_{1})},\dots , i_p,i_p^{-1}:\overline{r_n(P_{p})}\cong \overline{r_n(Q_{p})}\big\rangle,\]
where $R=\Mat_n(S)$ and for any $S$-module $M$, $r_n(M)$ denotes the $R$-module consisting of all row vectors of length $n$ over $M$, on which the matrices in $R=\Mat_n(S)$ act via the usual rules for multiplying a vector by a matrix.

We define the idempotent matrices
\begin{equation}e_{P_i}=\epsilon^{v_i}_{\widehat m_i+1}~~(1\leq i\leq p),\qquad e_{Q_i}=\sum_{j=1}^{m_i}\epsilon^{r(f_{i,j})}_{\widehat m_i+1+j}~~(1\leq i\leq p)\label{E:idemp1}
\end{equation}
and
\begin{equation}
e_i=\epsilon_{\hat m_i+1}^{v_i}~~(p+1\leq i\leq q),\qquad e_0=1-\sum_{i=1}^{p}(e_{P_i}+e_{Q_i})-\sum_{i=p+1}^q e_i \label{E:idemp2}
\end{equation}
where $\widehat m_i=\sum_{l=1}^{i-1}(m_l+1)$, and $\epsilon_{k}^v$ denotes the idempotent matrix in $R$ whose entry at position $(k,k)$ is $\alpha_v$ and whose other entries are $0$. Clearly the matrices
\[e_{P_1},e_{Q_1},e_{P_2},e_{Q_2},\dots, e_{P_p},e_{Q_p},e_{p+1},\dots,e_q, e_0\]
form a complete set of orthogonal idempotents in $R$.

We set $R_0:=K^{p+q+1}$ and view $R$ as an $R_0$-ring$_K$ (see \cite[\S2]{bergman74b}) via the $K$-algebra homomorphism $R_0\to R$ mapping 
\begin{align*}
(x_1,\dots,x_{p+q+1})\mapsto&~e_{P_1}x_1+e_{Q_1}x_2+\dots+e_{P_p}x_{2p-1}+e_{Q_p}x_{2p}\\
&+e_{p+1}x_{2p+1}+\dots+e_{q}x_{p+q}+e_0x_{p+q+1}.
\end{align*}
Note that this homomorphism might not be an isomorphism since $e_0$ can be $0$. We will denote $(1,0,\dots,0)\in R_0$ by $e_{P_1}$, $(0,1,\dots,0)\in R_0$ by $e_{Q_1}$ and so on. 

We leave it to the reader to check that $r_n(P_i)\cong e_{P_i}R$ and $r_n(Q_i)\cong e_{Q_i}R$ for any $1\leq i\leq p$. It follows that the $R$-modules $r_n(P_i),r_n(Q_i)~(1\leq i\leq p)$ are induced by $R_0$-modules:
\begin{align*}
r_n(P_i)&\cong e_{P_i}R\cong P_{0,i} \otimes_{R_0} R, \quad\text{where }P_{0,i}=e_{P_i}R_0,\\[0.2cm]
r_n(Q_i)&\cong e_{Q_i}R\cong Q_{0,i} \otimes_{R_0} R, \quad\text{where }Q_{0,i}=e_{Q_i}R_0.
\end{align*}
By \cite[Theorem 3.4]{bergman74b} this implies that
\begin{align*}
&R\big\langle i_1,i_1^{-1}:\overline{r_n(P_{1})}\cong \overline{r_n(Q_{1})},\dots , i_p,i_p^{-1}:\overline{r_n(P_{p})}\cong \overline{r_n(Q_{p})}\big\rangle\\[0.2cm]
\cong&R\amalg_{R_0}R_0\big\langle i_1,i_1^{-1}:\overline{P_{0,1}}\cong \overline{Q_{0,1}},\dots , i_p,i_p^{-1}:\overline{P_{0,p}}\cong \overline{Q_{0,p}}\big\rangle.
\end{align*}
The factor $R_0\big\langle i_1,i_1^{-1}:\overline{P_{0.1}}\cong \overline{Q_{0,1}},\dots , i_p,i_p^{-1}:\overline{P_{0,p}}\cong \overline{Q_{0,p}}\big\rangle$ is easily seen to be isomorphic to 
\[\underbrace{\Mat_2(K)\times \dots\times\Mat_2(K)}_{p\text{ factors}}\times \underbrace{K\times \dots\times K}_{q-p+1\text{ factors}}\]
made an $R_0$-ring via the homomorphism mapping 
\begin{align*}
(x_1,\dots,x_{p+q+1})\mapsto (e_{11}x_1+e_{22}x_2,\dots,e_{11}x_{2p-1}+e_{22}x_{2p},x_{2p+1},\dots,x_{p+q},x_{p+q+1}),
\end{align*}
cf. \cite[Proof of Theorem 5.2]{bergman74b}. We have shown:

\begin{theorem}\label{thm:lpa1}
Let $E$ be a finite graph with $|E^0_{\reg}|=p$ and $|E^0|=q$. If $n=|E^0|+|E^1|$, then 
\[\Mat_n(L(E))\cong \Mat_n(K^{E^0})\amalg_{K^{p+q+1}}(\Mat_2(K)^p\times K^{q-p+1}).\]
Here $\Mat_n(K^{E^0})$ and $\Mat_2(K)^p\times K^{q-p+1}$ are viewed as $K^{p+q+1}$-rings via the maps
\begin{align*}
(x_1,\dots,x_{p+q+1})\mapsto&~e_{P_1}x_1+e_{Q_1}x_2+\dots+e_{P_p}x_{2p-1}+e_{Q_p}x_{2p}\\
&+e_{p+1}x_{2p+1}+\dots+e_{q}x_{p+q}+e_0x_{p+q+1},
\end{align*}
and 
\begin{align*}
(x_1,\dots,x_{p+q+1})\mapsto (e_{11}x_1+e_{22}x_2,\dots,e_{11}x_{2p-1}+e_{22}x_{2p},x_{2p+1},\dots,x_{p+q},x_{p+q+1}),
\end{align*}
respectively, where the idempotents $e_{P_i}, e_{Q_i}, e_i$ are defined as in (\ref{E:idemp1}) and (\ref{E:idemp2}).
\end{theorem}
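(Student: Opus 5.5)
The statement is essentially a summary of the chain of isomorphisms established in the preceding paragraphs, so the plan is to assemble them in order and check the few points that were left to the reader.

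\emph{Presentation of $L(E)$.} Start from the universal-localisation presentation
\[L(E)\cong S\big\langle i_1,i_1^{-1}:\overline{P_{1}}\cong \overline{Q_{1}},\dots , i_p,i_p^{-1}:\overline{P_{p}}\cong \overline{Q_{p}}\big\rangle,\qquad S=K^{E^0},\]
from \cite[\S 3.2]{lpabook}. Here $i_k$ and $i_k^{-1}$ correspond to the row $(f_{k,1},\dots,f_{k,m_k})$ and the column $(f_{k,1}^*,\dots,f_{k,m_k}^*)^T$, so relations (iii) and (iv) say exactly that these maps are mutually inverse.

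\emph{Passage to matrix rings.} Apply \cite[Theorem 4.1]{bergman74b} to pass to $\Mat_n(L(E))$, which is presented over $R=\Mat_n(S)$ by isomorphisms $\overline{r_n(P_k)}\cong\overline{r_n(Q_k)}$. Then verify the two facts left to the reader.
\begin{enumerate}[(a)]
\item The listed idempotents are orthogonal and sum to $1$. For this one needs positions $\widehat m_k+1,\dots,\widehat m_k+1+m_k$ (and $\widehat m_i+1$ for sinks) to fit inside $\{1,\dots,n\}$. This holds because they use $\sum_{k\le p}(m_k+1)+(q-p)=|E^1|+q\le n$ positions, which is exactly why $n=|E^0|+|E^1|$ was chosen.
\item There are isomorphisms $r_n(P_k)\cong e_{P_k}R$ and $r_n(Q_k)\cong e_{Q_k}R$. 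Here $r_n(\alpha_vS)\cong\epsilon^v_jR$ for any row index $j$, since both are the row of length $n$ over $\alpha_vS$. For $Q_k$ one uses additivity of $r_n$ together with distinct rows for distinct summands.
\end{enumerate}
Thus these modules are induced from the $R_0$-modules $e_{P_k}R_0$ and $e_{Q_k}R_0$, where $R_0=K^{p+q+1}$ maps to $R$ as specified.

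\emph{Conversion to a coproduct.} Invoke \cite[Theorem 3.4]{bergman74b}: adjoining a universal isomorphism between induced modules equals the coproduct over $R_0$ with the corresponding universal $R_0$-ring. This yields $R\amalg_{R_0}R_0\langle\dots\rangle$.

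\emph{Computing the second factor.} The $R_0$-ring $R_0\langle i_k,i_k^{-1}:\overline{e_{P_k}R_0}\cong\overline{e_{Q_k}R_0}\rangle$ splits as a product according to the idempotents. Adjoining a universal isomorphism between $e_{P_k}K$ and $e_{Q_k}K$ to $K\times K$ is universal for the data of a pair of orthogonal idempotents plus mutually inverse maps between them, i.e.\ a system of $2\times2$ matrix units. So it gives $\Mat_2(K)$, with $e_{P_k}\mapsto e_{11}$ and $e_{Q_k}\mapsto e_{22}$. The untouched coordinates for $e_{p+1},\dots,e_q,e_0$ give the remaining $q-p+1$ copies of $K$. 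This matches \cite[Proof of Theorem 5.2]{bergman74b}.

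\emph{Main obstacle.} The step needing most care is the degenerate case $e_0=0$, when $R_0\to R$ is not injective. I expect it to be harmless because \cite{bergman74b} works with $R_0$-rings$_K$ without faithfulness: the coproduct then forces the $e_0$-coordinate of the second factor to vanish. The bookkeeping in (a) is the other point to check.
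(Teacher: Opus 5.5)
Your proposal is correct and follows the same chain the paper uses. That chain is: the universal-localisation presentation of $L(E)$ over $S=K^{E^0}$; Bergman's Theorem~4.1 to pass to $\Mat_n(L(E))$ over $\Mat_n(S)$; the identifications $r_n(P_i)\cong e_{P_i}R$, $r_n(Q_i)\cong e_{Q_i}R$ as modules induced from $R_0=K^{p+q+1}$; Bergman's Theorem~3.4 to obtain the coproduct over $R_0$; and the identification of the second factor with $\Mat_2(K)^p\times K^{q-p+1}$ via $2\times 2$ matrix units. Your verifications of the steps the paper leaves to the reader are sound (the positions used number exactly $|E^1|+q=n$), and your handling of the non-faithful case $e_0=0$ is consistent with Remark~\ref{rem:lpa1a}.
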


\begin{remark}\label{rem:lpa1a}
If $e_0$ equals $0$ (which happens if and only if $E$ has precisely one vertex), then one can remove the last factor $K$ from $K^{p+q+1}$ and also from $\Mat_2(K)^p\times K^{q-p+1}$.
\end{remark}

\begin{remark}\label{rem:lpa1b}
Analysing the proof of Theorem \ref{thm:lpa1} we see that there is an isomorphism 
\[\phi:  \Mat_n(K^{E^0})\amalg_{K^{p+q+1}}(\Mat_2(K)^p\times K^{q-p+1})\longrightarrow\Mat_n(L(E))\]
such that
\begin{align*}
\phi(e_{ij}\alpha_v)&= e_{ij}v &&\text{ for each } e_{ij}\alpha_v \text{ in }\Mat_n(K^{E^0}),\\
\phi(e^{(i)}_{11})&=e_{\hat m_i+1, \widehat m_i+1}v_i &&\text{ for } 1\leq i\leq p,\\
\phi(e^{(i)}_{22})&=\sum_{j=1}^{m_i}e_{\widehat m_i+1+j,\widehat m_i+1+j}r(f_{i,j}) &&\text{ for } 1\leq i\leq p,\\
\phi(e^{(i)}_{12})&=\sum_{j=1}^{m_i}e_{\widehat m_i+1, \widehat m_i+1+j}f_{i,j}&&\text{ for } 1\leq i\leq p,\\
\phi(e^{(i)}_{21})&=\sum_{j=1}^{m_i}e_{ \widehat m_i+1+j,\widehat m_i+1}f_{i,j}^* &&\text{ for } 1\leq i\leq p,\\
\phi(1^{(i)})&=e_{ \widehat m_i+1,\widehat m_i+1}v_i &&\text{ for } p+1\leq i\leq q,\\
\phi(1^{(0)})&=I_n-\sum_{i=1}^p(\phi(e^{(i)}_{11})+\phi(e^{(i)}_{22}))-\sum_{i=p+1}^q\phi(1^{(i)}),&&
\end{align*}
where for $1\leq i\leq p$ and $k,l\in\{1,2\}$,
\[e^{(i)}_{kl}=(0,\dots,0,e_{kl},0,\dots,0)\in \Mat_2(K)^p\times K^{q-p+1}\]
with $e_{kl}$ in the $i$-th position, for $p+1\leq i\leq q$, 
\[1^{(i)}=(0,\dots,0,1,0,\dots,0)\in \Mat_2(K)^p\times K^{q-p+1}\]
with $1$ in the $i$-th position, 
and 
\[1^{(0)}=(0,\dots,0,1)\in \Mat_2(K)^p\times K^{q-p+1}.\]
\end{remark}

\begin{example}\label{ex:lpa1}
Suppose that $E$ is the graph with one vertex $v$ and two edges $e$ and $f$. Note that by \cite[Proposition 1.3.2]{lpabook} the Leavitt path algebra $L(E)$ is isomorphic to the Leavitt algebra $L(1,2)$. By Theorem \ref{thm:lpa1} and Remarks \ref{rem:lpa1a} and \ref{rem:lpa1b} there is an isomorphism  
\[\phi:  \Mat_3(K)\amalg_{K^{2}}\Mat_2(K)\to\Mat_3(L(E))\]
such that
\begin{align*}
\phi(e_{ij}^{(1)})&= e_{ij} \quad\text{for all }1\leq i,j\leq 3,\\
\phi(e^{(2)}_{11})&=e_{11},\\
\phi(e^{(2)}_{22})&=e_{22}+e_{33},\\
\phi(e^{(2)}_{12})&=\begin{pmatrix}
0&e&f\\
0&0&0\\
0&0&0
\end{pmatrix},\\
\phi(e^{(2)}_{21})&=\begin{pmatrix}
0&0&0\\
e^*&0&0\\
f^*&0&0
\end{pmatrix},
\end{align*}
where $e_{ij}^{(1)}$ are the matrix units in $\Mat_3(K)$, $e_{ij}^{(2)}$ are the matrix units in $\Mat_2(K)$ and $e_{ij}$ are the matrix units in $\Mat_3(L(E))$.
\end{example}

\subsection{The functor $\psi:\P(\Mat_n(L(E)))\to\P(L(E))$}

We keep the notation of \S 4.2. Set
\[
R:=\Mat_n(K^{E^0})\amalg_{K^{p+q+1}}\bigl(\Mat_2(K)^p\times K^{q-p+1}\bigr)
\]
and
\(\widetilde{R}:=\Mat_n(L(E))\), and let \(\phi:R\to\widetilde{R}\) be the isomorphism
described in Remark~\ref{rem:lpa1b}.

Let
\[
\mathcal{I}=\{e_{P_i},e_{Q_i},e_j,e_0\mid 1\leq i\leq p,\ p+1\leq j\leq q\}.
\]
Clearly \(\mathcal{J}=\phi(\mathcal{I})\) is a complete set of orthogonal idempotents
in \(\widetilde{R}\), and moreover
\begin{align*}
\phi(e_{P_i}) &= e_{\widehat m_i+1,\widehat m_i+1} v_i && (1\le i\le p),\\[2pt]
\phi(e_{Q_i}) &= \sum_{j=1}^{m_i} e_{\widehat m_i+1+j,\;\widehat m_i+1+j} \, r(f_{i,j}) && (1\le i\le p),\\[2pt]
\phi(e_i) &= e_{\widehat m_i+1,\widehat m_i+1} v_i && (p+1\le i\le q),\\[2pt]
\phi(e_0)
&=
I_n
-
\sum_{i=1}^p
\bigl(\phi(e_{P_i})+\phi(e_{Q_i})\bigr)
-
\sum_{i=p+1}^q
\phi(e_i).
\end{align*}

We denote by \(\P(\widetilde{R})\) the category from Definition~\ref{def:catP}
with respect to \(\mathcal{J}\), and by \(\P(L(E))\) the category from
Definition~\ref{def:catP} with respect to \(E^0=\{v_1,\dots,v_q\}\).
The corresponding general linear groupoids are denoted by
\(\Ge(\widetilde{R})\) and \(\Ge(L(E))\).

\medskip
\noindent\textbf{Atomic decomposition of the idempotents in \(\mathcal{J}\).}
We call the idempotents
\[
\varepsilon(k,v)=e_{kk}v\in \widetilde{R}
\qquad(1\le k\le n,\ v\in E^0)
\]
\textit{atomic}. For \(u\in\mathcal{J}\), let
\(\operatorname{at}(u)\) be the ordered list of atomic idempotents whose
sum is \(u\), ordered first by row index \(k\) and, for equal \(k\), by the
fixed ordering \(v_1,\dots,v_q\) of the vertices. Denote the corresponding
ordered list of vertices by
\[
V(u)=\bigl(\operatorname{vert}(a)\bigr)_{a\in\operatorname{at}(u)},
\]
where \(\operatorname{vert}(\varepsilon(k,v))=v\).

\medskip
\noindent\textbf{Definition of \(\psi\).}
Let
\[
\xi_{\widetilde R}:\P(\widetilde R)\to\P'(\widetilde R)
\quad\text{and}\quad
\xi_{L(E)}:\P(L(E))\to\P'(L(E))
\]
be the canonical isomorphisms from Remark~\ref{rem:xi}.

Let \(\mathcal M\) be the restriction of the Morita functor
\[
-\otimes_{\widetilde R}\widetilde R e_{11}
:
\operatorname{Mod}(\widetilde R)\longrightarrow \operatorname{Mod}(L(E))
\]
to \(\P'(\widetilde R)\). Thus on objects, 
\[
\mathcal M(\widetilde R^\sigma)=\widetilde R^\sigma e_{11}.
\]

Let \(\P''(L(E))\) be the subcategory of
\(\operatorname{Mod}(L(E))\) whose objects are the \(L(E)\)-modules of
the form \(\widetilde R^\sigma e_{11}\), and whose morphisms are all
\(L(E)\)-module homomorphisms between them.

For each \(\sigma\in\Es_{\mathcal J}\), there is a canonical
\(L(E)\)-module isomorphism
\[
\widetilde R^\sigma e_{11}
\;\cong\;
L(E)^{\bar\sigma},
\]
where
\[
\bar\sigma
=
(V(\sigma_1),V(\sigma_2),\dots,V(\sigma_{|\sigma|})).
\]
These isomorphisms induce a functor
\[
\omega:\P''(L(E))\longrightarrow \P'(L(E)).
\]

Finally define
\[
\psi
=
\xi_{L(E)}^{-1}\circ\omega\circ\mathcal M\circ\xi_{\widetilde R}
.
\]
Hence the diagram
\[
\xymatrix@C=1.2cm@R=1.5cm{
\P(\widetilde R)
  \ar[r]^{\xi_{\widetilde R}}
  \ar[d]_{\psi}
& \P'(\widetilde R)
  \ar[dr]^{\mathcal M}
&
\\
\P(L(E))
& \P'(L(E))
  \ar[l]_{\xi_{L(E)}^{-1}}
& \mathcal \P''(L(E))
  \ar[l]_{\omega}
}
\]
commutes.

\begin{remark}\label{rem:additive}
Note that \(\psi\) preserves sums of
matrices.
\end{remark}

\begin{remark}\label{rem:psi}
Concretely, \(\psi\) does the following when applied to objects or morphisms.

\smallskip
\noindent\textit{Objects.}
For \(\sigma=(u_1,\dots,u_m)\in\Es_{\mathcal J}\),
\[
\psi(\sigma)
=
(V(u_1),V(u_2),\dots,V(u_m)),
\]
where the right-hand side is understood as the concatenation of the
lists \(V(u_i)\).

\smallskip
\noindent\textit{Morphisms.}
Let \(A=(A_{ij})\in\Mat_{\tau\times\sigma}(\widetilde R)\). Then \(A_{ij}\in \tau_i\widetilde R\sigma_j\) for all $i$ and $j$. Since the idempotents \(\tau_i\) and \(\sigma_j\) decompose into atomic idempotents, each entry \(A_{ij}\) of $A$ splits uniquely as a sum
\[
A_{ij}
=
\sum_{\substack{a\in\operatorname{at}(\tau_i),\\ b\in\operatorname{at}(\sigma_j)}}
A_{ij}^{a,b}
\]
where each $A_{ij}^{a,b}\in a\widetilde R b$. Let $B$ be the matrix whose rows are indexed by the elements of the ordered list $(\operatorname{at}(\tau_1),\dots,\operatorname{at}(\tau_{|\tau|}))$, whose columns are indexed by the elements of the ordered list $(\operatorname{at}(\sigma_1),\dots,\operatorname{at}(\sigma_{|\sigma|}))$, and whose entry corresponding to
\(a\in\operatorname{at}(\tau_i)\) and \(b\in\operatorname{at}(\sigma_j)\)
is the only nonzero entry of the matrix $A_{ij}^{a,b}$ (respectively $0$, if $A_{ij}^{a,b}$ has no nonzero entries). Then \(B=\psi(A)\).
\end{remark}

\begin{example}\label{ex:psi}
Let again $E$ be the graph with one vertex and two edges. Let 
\[A=\begin{pmatrix}
\begin{pmatrix}
a_{11}&0&0\\
0&0&0\\
0&0&0
\end{pmatrix}&\begin{pmatrix}
0&a_{12}&a_{13}\\
0&0&0\\
0&0&0
\end{pmatrix}\\
\begin{pmatrix}
0&0&0\\
a_{21}&0&0\\
a_{31}&0&0
\end{pmatrix}&\begin{pmatrix}
0&0&0\\
0&a_{22}&a_{23}\\
0&a_{32}&a_{33}
\end{pmatrix}
\end{pmatrix}
\in \Mat_{\sigma\times\sigma}(\Mat_3(L(E))),\]
where $a_{ij}\in L(E)$ for all $i,j$ and $\sigma= (e_{11},e_{22}+e_{33})$. Then 
\[\psi(A)=\begin{pmatrix}
a_{11}&a_{12}&a_{13}\\
a_{21}&a_{22}&a_{23}\\
a_{31}&a_{32}&a_{33}
\end{pmatrix}.\]
\end{example}

Clearly $\psi$ restricts to a functor $\Ge(\Mat_n(L(E)))\longrightarrow \Ge(L(E))$. We denote this restriction by the same letter $\psi$.

\begin{lemma}\label{lem:psisurj}
The functor
\[
\psi:
\Ge(\Mat_n(L(E)))\longrightarrow \Ge(L(E))
\]
is surjective on objects and on
morphisms.
\end{lemma}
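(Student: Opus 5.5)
Surjectivity on objects comes first. Given $\rho=(w_1,\dots,w_m)\in(E^0)^m$, I want $\sigma\in\Es_{\mathcal J}$ with $\psi(\sigma)=\rho$. For each vertex $w$ I pick an element $u(w)\in\mathcal J$ with $V(u(w))=(w)$, i.e.\ a single atomic summand. For a regular vertex $v_i$ ($1\le i\le p$) this is $\phi(e_{P_i})=\varepsilon(\widehat m_i+1,v_i)$. For a sink $v_i$ ($p<i\le q$) it is $\phi(e_i)=\varepsilon(\widehat m_i+1,v_i)$. Then $\sigma=(u(w_1),\dots,u(w_m))$ satisfies $\psi(\sigma)=\rho$ by the description of $\psi$ on objects in Remark~\ref{rem:psi}, because the concatenation of one-element lists is $\rho$.

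For morphisms, let $B\in\GL_{\rho'\times\rho}(L(E))$ with $\rho=(w_1,\dots,w_m)$ and $\rho'=(w'_1,\dots,w'_{m'})$. Take $\sigma,\tau$ built as above from $\rho$ and $\rho'$. Every $\sigma_j$ and $\tau_i$ is then a single atomic idempotent $\varepsilon(k_j,w_j)$, resp.\ $\varepsilon(k'_i,w'_i)$. Define $A\in\Mat_{\tau\times\sigma}(\widetilde R)$ by $A_{ij}=e_{k'_i k_j}B_{ij}$, the matrix over $L(E)$ with $B_{ij}$ at position $(k'_i,k_j)$ and zeros elsewhere. Since $B_{ij}\in w'_iL(E)w_j$, we get $A_{ij}\in\tau_i\widetilde R\sigma_j$. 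By Remark~\ref{rem:psi}, $\psi(A)=B$, since each entry has exactly one atomic piece and its unique nonzero entry is $B_{ij}$. In the same way define $A'\in\Mat_{\sigma\times\tau}(\widetilde R)$ from $B^{-1}$.

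It remains to check $A'A=I_\sigma$ and $AA'=I_\tau$, so that $A$ lies in $\GL_{\tau\times\sigma}(\widetilde R)$. The cleanest route is that $\psi$ is a functor on $\P$ that is faithful, since the entries of $A$ are recovered from those of $\psi(A)$. Then $\psi(A'A)=B^{-1}B=I_\rho=\psi(I_\sigma)$ forces $A'A=I_\sigma$, and likewise $AA'=I_\tau$. Faithfulness should follow directly from Remark~\ref{rem:psi}: the decomposition $A_{ij}=\sum A^{a,b}_{ij}$ is unique and each $A^{a,b}_{ij}\in a\widetilde Rb$ has at most one nonzero entry. Alternatively, one can compute $(A'A)_{jl}=\sum_i e_{k_jk'_i}B^{-1}_{ji}e_{k'_ik_l}B_{il}=e_{k_jk_l}(B^{-1}B)_{jl}$, which equals $\delta_{jl}\,e_{k_jk_j}w_j=\sigma_j$ on the diagonal and $0$ off it.

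The main obstacle is bookkeeping. Two entries of $\sigma$ may share the same row index $k$ when a vertex is repeated. This is harmless because the objects of $\Es_{\mathcal J}$ are tuples and repeats are allowed. I also have to make sure the chosen $u(w)$ genuinely lie in $\mathcal J$ and are atomic, which the displayed formulas for $\phi(e_{P_i})$ and $\phi(e_i)$ confirm.
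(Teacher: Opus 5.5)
Your proposal is correct and is essentially the paper's proof. You use the same atomic lifts $\phi(e_{P_j})$ (regular vertices) or $\phi(e_j)$ (sinks) for objects, and the same lift $A_{ij}=e_{k'_ik_j}B_{ij}$, together with the analogous lift of $B^{-1}$, for morphisms. The only difference is that you actually verify $A'A=I_\sigma$ and $AA'=I_\tau$, either by the direct entry computation or via faithfulness of $\psi$, whereas the paper simply asserts these identities.
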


\begin{proof}
$~$\\
\noindent\textit{Surjective on objects.}
Let \(\alpha=(v_{j_1},\dots,v_{j_a})\in\Es_{E^0}\). For each vertex
\(v_j\in E^0\), define the idempotent
\[
u(v_j)=
\begin{cases}
\phi(e_{P_j}) & \text{if }v_j\text{ is regular }(1\le j\le p),\\[2pt]
\phi(e_j)    & \text{if }v_j\text{ is a sink }(p+1\le j\le q).
\end{cases}
\]
Then \(u(v_j)\) is a single atomic idempotent and
\(V(u(v_j))=(v_j)\). Set
\[
\sigma
=
\bigl(u(v_{j_1}),u(v_{j_2}),\dots,u(v_{j_a})\bigr)
\in \Es_{\mathcal J}.
\]
Then
\[
\psi(\sigma)
=
(V(u(v_{j_1})),V(u(v_{j_2})),\dots,V(u(v_{j_a})))
=
(v_{j_1},v_{j_2},\dots,v_{j_a})
=
\alpha.
\]

\smallskip
\noindent\textit{Surjective on morphisms.}
Let \(\alpha=(v_{j_1},\dots,v_{j_a})\) and
\(\beta=(v_{j'_1},\dots,v_{j'_b})\) be objects in \(\Ge(L(E))\), and
let
\[
X\in\GL_{\beta\times\alpha}(L(E)).
\]
Define
\[
\sigma
=
\bigl(u(v_{j_1}),\dots,u(v_{j_a})\bigr),
\qquad
\tau
=
\bigl(u(v_{j'_1}),\dots,u(v_{j'_b})\bigr).
\]
Then \(\psi(\sigma)=\alpha\) and \(\psi(\tau)=\beta\). For each
\(1\le j\le q\), write
\[
u(v_j)=e_{k_j,k_j}v_j,
\qquad
k_j=\widehat m_j+1.
\]
Define a matrix \(A\in\Mat_{\tau\times\sigma}(\widetilde R)\) by
\[
A_{rs}=e_{k_{j'_r},\,k_{j_s}}\,X_{rs}
\qquad(1\le r\le b,\ 1\le s\le a).
\]
If \(Y=X^{-1}\),
define \(B\in\Mat_{\sigma\times\tau}(\widetilde R)\) by
\[
B_{sr}=e_{k_{j_s},\,k_{j'_r}}\,Y_{sr}\qquad ( 1\le s\le a,\ 1\le r\le b).
\]
Then \(AB=I_\tau\) and \(BA=I_\sigma\), so
\(A\in\GL_{\tau\times\sigma}(\widetilde R)\). By Remark~\ref{rem:psi},
\[
\psi(A)_{rs}=X_{rs}
\]
for all \(r,s\). Thus \(\psi(A)=X\). 
\end{proof}

\subsection{A generating set for $\Ge(L(E))$}
Recall that $E$ denotes a finite graph and
\[
E^0_{\reg}=\{v_1,\dots,v_p\},\qquad
E^0_{\sink}=\{v_{p+1},\dots,v_q\},\qquad
s^{-1}(v_i)=\{f_{i,1},\dots,f_{i,m_i}\}\quad (1\leq i\leq q).
\]
We continue to write $\Ge(L(E))$ for the general linear groupoid of
$L(E)$ with respect to $E^0=\{v_1,\dots,v_q\}$.

\begin{definition}\label{def:F}
Let $\sigma,\tau\in \Es_{E^0}$. We denote by $\Ef_{\sigma\times\tau}(L(E))$ the set of
all block-diagonal matrices
$A=\operatorname{diag}(A_1,\dots,A_\ell)\in\GL_{\sigma\times\tau}(L(E))$
such that each block $A_j$ is either of the
form
\begin{enumerate}[(i)]
\item $(v)$ where $v\in E^0$, or
\item $(f_{i,1},\dots,f_{i,m_i})$ where
$1\leq i\leq p$, or
\item $(f_{i,1},\dots,f_{i,m_i})^*
      =
      \begin{pmatrix}
       f_{i,1}^*\\
      \vdots\\
       f_{i,m_i}^*
      \end{pmatrix}$
      where $1\leq i\leq p$.
\end{enumerate}
\end{definition}

\begin{example}
Suppose again that $E$ is the graph with one vertex $v$ and two edges $e$ and $f$. Let
\[
\sigma=(v,v,v,v,v),\qquad \tau=(v,v,v,v,v,v).
\]
Then
\[
A=\operatorname{diag}
\bigl(
(e\ f),\ (v),\ (e\ f)^*,\ (e\ f)
\bigr)
=
\begin{pmatrix}
e & f & 0 & 0 & 0 & 0\\
0 & 0 & v & 0 & 0 & 0\\
0 & 0 & 0 & e^* & 0 & 0\\
0 & 0 & 0 & f^* & 0 & 0\\
0 & 0 & 0 & 0 & e & f
\end{pmatrix}
\in \Ef_{\sigma\times\tau}(L(E)).
\]
Its inverse is the matrix 
\[
A^{-1}
=
A^*
=
\operatorname{diag}
\bigl(
(e\ f)^*,\ (v),\ (e\ f),\ (e\ f)^*
\bigr)
=
\begin{pmatrix}
e^* & 0 & 0 & 0 & 0\\
f^* & 0 & 0 & 0 & 0\\
0 & v & 0 & 0 & 0\\
0 & 0 & e & f & 0\\
0 & 0 & 0 & 0 & e^*\\
0 & 0 & 0 & 0 & f^*
\end{pmatrix}
\in\Ef_{\tau\times\sigma}(L(E))
.\]
\end{example}

\begin{theorem}\label{thm:lpa2}
The general linear groupoid $\Ge(L(E))$ is generated by the sets
\[
\Ee_{\sigma}(L(E)),~\Ef_{\sigma\times\tau}(L(E)),~\Pe_{\sigma\times\tau}(K)
\quad (\sigma,\tau\in\Es_{E^0}).
\]
\end{theorem}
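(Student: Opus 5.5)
Let $\mathbf H$ denote the subgroupoid of $\Ge(L(E))$ generated by the sets $\Ee_{\sigma}(L(E))$, $\Ef_{\sigma\times\tau}(L(E))$ and $\Pe_{\sigma\times\tau}(K)$. The plan is to transport the generating set for $\Ge(\widetilde R)$ given by Theorem~\ref{thm:main} along the surjective functor $\psi$ of Lemma~\ref{lem:psisurj}.

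\textbf{Step 1: Condition (P) holds.} Here $R=\Mat_n(K^{E^0})\amalg_{R_0}(\Mat_2(K)^p\times K^{q-p+1})$ with $R_0=K^{p+q+1}$. All three rings $R_0$, $R_1=\Mat_n(K^{E^0})$ and $R_2=\Mat_2(K)^p\times K^{q-p+1}$ are semisimple, so every finitely generated projective module is a direct sum of simple modules. Each simple $R_\mu$-module is a direct summand of some $e^{(j)}R_\mu$ with $e^{(j)}\in\I$, the image of the $j$-th idempotent of $R_0$. I will check that each simple module is in fact isomorphic to such an $e^{(j)}R_\mu$, using that the atomic idempotents $e_{kk}\alpha_v$ all occur, up to isomorphism, among the images of the $\I$. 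If some simple does not occur this way, Condition (P) as stated (with a complement $M'$) still needs the weaker fact that any summand of a pseudo-free module is pseudo-free. I will verify this by counting multiplicities of simples, which is the main thing to check in this step. Remark~\ref{rem:condP} then gives Condition (P), and Theorem~\ref{thm:main} shows that $\Ge(\widetilde R)\cong\Ge(R)$ is generated by $\Ge(R,0)$, $\Ge(R,1)$ and $\Ge(R,2)$.

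\textbf{Step 2: $\psi$ maps each generator into $\mathbf H$.} Since $\psi$ is surjective on morphisms and is a functor, this step finishes the proof. There are three kinds of generators.
\begin{enumerate}[(a)]
\item Matrices in $\GL_{\pi\times\rho}(R_0)$ and $\GL_{\pi\times\rho}(R_1)$ have entries in $\phi(\Mat_n(K^{E^0}))$, i.e. $K$-linear combinations of $e_{ij}v$. By Remark~\ref{rem:psi}, $\psi$ of such a matrix lies in $\GL_{\alpha\times\beta}(K)$ for suitable $\alpha,\beta$. Composing with a permutation matrix reduces this to $\GL_\alpha(K)$, which lies in $\mathbf H$ by Lemma~\ref{lem:GE}.
\item Matrices in $\GL_{\pi\times\rho}(R_2)$ are $K$-combinations of $\phi(e^{(i)}_{kl})$ and $\phi(1^{(i)})$. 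Write $D_i=\diag$ of the $\phi(e^{(i)}_{kl})$ structure, and let $F$ be the block-diagonal element of $\Ef$ whose block sends $v_i$ to $(f_{i,1},\dots,f_{i,m_i})$ and whose other blocks are the identity. After applying $\psi$, each $e^{(i)}_{12}$-entry becomes a row $(f_{i,1},\dots,f_{i,m_i})$ and each $e^{(i)}_{21}$-entry becomes the corresponding column of $f^*_{i,j}$. Conjugating by a suitable $F$ and using $F^{-1}=F^*$, I expect $\psi(X)=F\,Y\,F'^{-1}$ with $Y\in\GL(K)$. This is the factorisation $R_2\cong\prod\Mat_2(K)$, in which $e_{12}$ becomes a scalar once the $Q_i$-block is identified with the $P_i$-block via $F$. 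The factor $Y$ is then handled as in (a).
\item $\mu$-based transvections $I+ABC$ with $A,C$ over $R_\mu$ and $B$ arbitrary map under $\psi$ to transvections $I+\psi(A)\psi(B)\psi(C)$, by additivity (Remark~\ref{rem:additive}) and functoriality. For $\mu=0,1$, $\psi(A)$ and $\psi(C)$ have entries in $K$, so Lemma~\ref{lem:transv} places the transvection in the groupoid generated by $\Ee(L(E))$ and $\Pe(K)$. For $\mu=2$, I first conjugate by the $\Ef$-matrices from (b) so that $A$ and $C$ become matrices over $K$, and then apply Lemma~\ref{lem:transv}.
\end{enumerate}

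\textbf{Main obstacle.} I expect the bookkeeping in (b) and in the $\mu=2$ case of (c) to be the hardest part. Concretely, one must show that any invertible matrix over the image of $\Mat_2(K)^p\times K^{q-p+1}$, after $\psi$, factors through $\Ef$-matrices and $K$-matrices. The approach I would try first is to write $\psi(\phi(e^{(i)}_{12}))=F_i\cdot(\text{$K$-matrix unit})$ explicitly, with $F_i\in\Ef$ of type (ii), and to verify that this factorisation is compatible with matrix products blockwise.
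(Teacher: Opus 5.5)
Your proposal is correct. Its overall structure matches the paper's:
\begin{itemize}
\item Condition (P) holds because every finitely generated projective module over $R_0$, $R_1$, $R_2$ is pseudo-free. Your primary claim, that each simple module is isomorphic to some $e^{(j)}R_\mu$, is true because every vertex is regular or a sink, so the hedge in Step~1 is unnecessary.
\item Theorem~\ref{thm:main} and the surjectivity of $\psi\circ\phi$ then reduce the problem to the generators.
\item Lemmas~\ref{lem:GE} and~\ref{lem:transv} handle $\GL(R_0)$, $\GL(R_1)$ and the $0$- and $1$-based transvections.
\end{itemize}
One detail is missing in (a): the permutation matrix exists only because each vertex occurs equally often in $\alpha$ and $\beta$. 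The paper reads this off the $\V$-monoid of $R_1$. It also follows directly: the image matrix is block-diagonal by vertex with scalar blocks, and invertibility forces these blocks to be square.

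You genuinely depart from the paper in the $R_2$ part. The paper uses Gaussian elimination over $R_2$. It finds $E\in\Ee_\sigma(R_2)$ with $EA$ diagonal, with entries $k e^{(i)}_{lm}$ or $k1^{(i)}$, whose image lies in $\De_{\bar\sigma}(K)\Ef_{\bar\sigma\times\bar\tau}(L(E))$. For $2$-based transvections it uses row echelon form to conjugate into $\Ee_\sigma(R)$.

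Your normalisation by the partial isometries $e^{(i)}_{12}$ also works. It is cleanest if done inside $R_2$ before applying $\psi$:
\begin{itemize}
\item For $\pi\in\Es_\I$, let $\pi'$ be obtained from $\pi$ by replacing each $e_{Q_i}$ by $e_{P_i}$.
\item Let $D_\pi\in\GL_{\pi'\times\pi}(R_2)$ be diagonal, with entry $e^{(i)}_{12}$ where $\pi_s=e_{Q_i}$ and entry $\pi_s$ elsewhere. Then $\psi(\phi(D_\pi))\in\Ef^+(L(E))$.
\item For $X\in\GL_{\pi\times\rho}(R_2)$, the matrix $D_\pi XD_\rho^{-1}$ and its inverse $D_\rho X^{-1}D_\pi^{-1}$ both have entries in $R_0$. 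This is because every corner $\pi'_sR_2\rho'_t$ is either $0$ or $K$ times an idempotent of $R_0$. Hence $D_\pi XD_\rho^{-1}\in\GL_{\pi'\times\rho'}(R_0)$, and (b) reduces to (a).
\item The same conjugation turns a $2$-based transvection $I_\sigma+ABC$ into $I_{\sigma'}+(D_\sigma AD_\tau^{-1})(D_\tau BD_\tau^{-1})(D_\tau CD_\sigma^{-1})$. Here the outer factors lie over $R_0$ and $(D_\tau CD_\sigma^{-1})(D_\sigma AD_\tau^{-1})=0$, so Lemma~\ref{lem:transv} applies.
\end{itemize}
Note that writing $\psi(\phi(e^{(i)}_{12}))=F_i\cdot(\text{matrix unit})$ is not what is needed. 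That image is already the $\Ef$-block $(f_{i,1},\dots,f_{i,m_i})$; the real content is the diagonal conjugation above.

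What each route buys:
\begin{itemize}
\item Yours avoids elimination entirely. It explains why the $\Ef$-generators appear: they are the images of $e^{(i)}_{12}$ and $e^{(i)}_{21}$. It reduces the $\Mat_2(K)$-factors uniformly to the scalar case.
\item The paper's stays within elementary matrices. In particular it shows that $2$-based transvections need no $\Ef$-matrices at all.
\end{itemize}
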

\begin{proof}
By Theorem~\ref{thm:lpa1} there is an isomorphism
\[
\phi : R\to \Mat_n(L(E)),
\qquad
R = \Mat_n(K^{E^0})\amalg_{K^{p+q+1}}\bigl(\Mat_2(K)^p\times K^{q-p+1}\bigr).
\]
Write $R_0=K^{p+q+1}$, $R_1=\Mat_n(K^{E^0})$ and $R_2=\Mat_2(K)^p\times K^{q-p+1}$. We will consider only the case that $e_0\neq 0$. The case $e_0=0$ can be handled similarly, but one should delete the last factor \(K\) from \(R_0\) and the last
factor \(K\) from \(R_2\) (see Remark \ref{rem:lpa1a}). Both $R_1$ and $R_2$ are faithful $R_0$-rings, and $R$ is their coproduct in the category of $R_0$-rings.  

Recall that the $\V$-monoid $\V(S)$ of a ring $S$ is the set of all isomorphism classes of finitely generated projective modules, which becomes an abelian monoid by defining $[P]+[Q]=[P\oplus Q]$.

The monoid $\V(R_0)$ is isomorphic to $\N_0^r$. In $\N_0^r$, the element $\alpha_i$, whose $i$-th component is $1$ and whose other components are $0$, represents the isomorphism class of $u_iR_0$. 
 
The monoid $\V(R_1)$ is isomorphic to $\N_0^q$. In $\N_0^q$, the element $\alpha_j$ represents the isomorphism class of $e_{11}\alpha_{v_j}R_1$.  

The monoid $\V(R_2)$ is isomorphic to $\N_0^{q+1}$. If $1\leq i\leq p$, then $\alpha_i\in\N_0^{q+1}$ represents the isomorphism class of $e_{11}^{(i)}R_2$ where 
\[e^{(i)}_{11}=(0,\dots,0,e_{11},0,\dots,0)\]
with $e_{11}$ in the $i$-th position. If $p+1\leq i\leq q+1$, then $\alpha_i\in\N_0^{q+1}$ represents the isomorphism class of $1^{(i)}R_2$ where
\[1^{(i)}=(0,\dots,0,1,0,\dots,0)\]
with $1$ in the $i$-th position.

Considering the $V$-monoids of $R_0$, $R_1$ and $R_2$ we see that every finitely generated projective module over one of these rings is pseudo‑free. Hence Condition (P) is satisfied by Remark \ref{rem:condP}. Theorem~\ref{thm:main} therefore tells us that the general linear
groupoid $\Ge(R)$ with respect to
\[\mathcal{I}=\{e_{P_i},e_{Q_i},e_j,e_0\mid 1\leq i\leq p,\ p+1\leq j\leq q\}=\{u_1,\dots,u_r\}\]
is generated by the subgroupoids $\Ge(R,\mu)\;(\mu=0,1,2)$.
Recall that for $\mu\in\{0,1,2\}$, $\Ge(R,\mu)$ denotes the subgroupoid of $\Ge(R)$ generated by
the sets $\GL_{\sigma\times\tau}(R_\mu)$ and 
$\T_\sigma(R,\mu)$ where $\sigma,\tau\in \Es_\I$.

Clearly the isomorphism $\phi:R\to\Mat_n(L(E))$ in Remark \ref{rem:lpa1b} induces an isomorphism
\[
\phi:\Ge(R)\to\Ge(\Mat_n(L(E))),
\]
where $\Ge(\Mat_n(L(E)))$ denotes the general linear groupoid of
$\Mat_n(L(E))$ with respect to $\mathcal{J}=\phi(\mathcal{I})$.
Let
\[
\psi:\Ge(\Mat_n(L(E)))\to\Ge(L(E))
\]
be the functor defined in \S 4.3. Since $\psi$ is surjective by
Lemma~\ref{lem:psisurj}, the composition
\[
\psi\circ\phi : \Ge(R)\to\Ge(L(E))
\]
is surjective. It therefore suffices to show that all generators of
$\Ge(R)$ are mapped by $\psi\circ\phi$ into the subgroupoid
$\H\subseteq\Ge(L(E))$ generated by the sets
$\Ee_{\sigma}(L(E))$, $\Ef_{\sigma\times\tau}(L(E))$ and $\Pe_{\sigma\times\tau}(K)
$ where $\sigma,\tau\in\Es_{E^0}$. 

If $\sigma$ is an object in $\Ge(R)$ and $A$ is a morphism in $\Ge(R)$, then we might denote their images in $\Ge(L(E))$ under $\psi\circ\phi$ by $\bar\sigma$ and $\bar A$, respectively. 
 
\bigskip\noindent 
\textbf{Part I}\\
First we will show that $\psi(\phi(\GL_{\sigma\times\tau}(R_\mu)))\subseteq \H$ for every $\mu\in\{0,1,2\}$ and $\sigma,\tau\in \Es_\I$. 

\medskip\noindent
\textit{1. Matrices from $\GL_{\sigma\times\tau}(R_0)$.}
Let $\sigma,\tau\in\Es_\I$ and $A\in\GL_{\sigma\times\tau}(R_0)$. Let $\sigma_0\in \Es_\I$ be the ordered element that can be obtained from $\sigma$ by permuting components. Similarly, let $\tau_0\in \Es_\I$ be the ordered element that can be obtained from $\tau$ by permuting components (see Definition \ref{def:ordered}). Choose permutation matrices $P\in\Pe_{\sigma_0\times\sigma}(K)$ and $Q\in\Pe_{\tau\times\tau_0}(K)$. Then $\hat A:=PAQ\in \GL_{\sigma_0\times\tau_0}(R_0)$. This shows that we are reduced to the case that $\sigma$ and $\tau$ are ordered, since $\bar P\in \Pe_{\bar\sigma_0\times\bar\sigma}(K)$ and $\bar Q\in \Pe_{\bar\tau\times\bar\tau_0}(K)$.

So assume $\sigma,\tau\in\Es_\I$ are ordered and $A\in\GL_{\sigma\times\tau}(R_0)$. The matrix $A$ defines an isomorphism between the finitely generated projective $R_0$-modules
\[R_0^\sigma\cong \bigoplus_{i=1}^{r}(u_iR_0)^{n^\sigma_i}\quad\text{and}\quad 
R_0^\tau\cong \bigoplus_{i=1}^{r}(u_iR_0)^{n^\tau_i}.\] 
Considering the $\V$-monoid of $R_0$ we see that $n^\sigma_i=n^\tau_i$ for every
$1\leq i\leq r$. It follows that $\sigma=\tau$. Hence $\bar A\in \GL_{\bar\sigma}(K)$. It follows from Lemma~\ref{lem:GE} that $\bar A\in \H$.

\medskip\noindent
\textit{2. Matrices from $\GL_{\sigma\times\tau}(R_1)$.}
Let $\sigma,\tau\in\Es_\I$ and $A\in\GL_{\sigma\times\tau}(R_1)$. Then $A$
defines an isomorphism between the finitely generated projective
$R_1$-modules
\[
R_1^\sigma
\;\cong\;
\bigoplus_{j=1}^{|\sigma|}
\bigoplus_{v\in V(\phi(\sigma_j))} e_{11}\alpha_v R_1
\qquad\text{and}\qquad
R_1^\tau
\;\cong\;
\bigoplus_{j=1}^{|\tau|}
\bigoplus_{v\in V(\phi(\tau_j))} e_{11}\alpha_v R_1.
\]
Considering the \(\V\)-monoid of \(R_1\) we see that for every vertex
\(v\in E^0\), the number of direct summands of the form 
\(e_{11}\alpha_{v}R_1\) on the left hand side must be equal to the number of such summands on the right hand side.

Clearly $\bar\sigma=(V(\phi(\sigma_1)),\dots,V(\phi(\sigma_{|\sigma|})))$ and $\bar\tau=(V(\phi(\tau_1)),\dots,$ $V(\phi(\tau_{|\tau|})))$. It follows from the previous paragraph that there is a permutation $\pi$ such that $\bar\sigma_i=\bar\tau_{\pi(i)}$ for all $1\leq i\leq |\bar\sigma|=|\bar\tau|$. Clearly $\bar A\in \GL_{\bar\sigma\times \bar\tau}(K)$. Choose a matrix $P\in \Pe_{\bar\tau\times\bar\sigma}(K)$. Then $\bar AP\in \GL_{\bar\sigma}(K)$. It follows from Lemma~\ref{lem:GE} that $\bar AP\in \H$. Thus $\bar A\in \H$.

\medskip\noindent
\textit{3. Matrices from $\GL_{\sigma\times\tau}(R_2)$.}
Let $\sigma,\tau\in \Es_\I$ and $A\in\GL_{\sigma\times\tau}(R_2)$. As in 1., we can assume that $\sigma$ and $\tau$ are ordered. The matrix $A$ defines an isomorphism between the finitely generated projective $R_2$-modules
\[\bigoplus_{i=1}^r(u_iR_2)^{n^\sigma_i}\cong\bigoplus_{i=1}^p(e^{(i)}_{11}R_2)^{n^\sigma_{2i-1}+n^\sigma_{2i}} \oplus \bigoplus_{i=p+1}^{q+1}(1^{(i)}R_2)^{n^\sigma_{i+p}}\]
and
\[\bigoplus_{i=1}^r(u_iR_2)^{n^\tau_i}\cong\bigoplus_{i=1}^p(e^{(i)}_{11}R_2)^{n^\tau_{2i-1}+n^\tau_{2i}} \oplus \bigoplus_{i=p+1}^{q+1}(1^{(i)}R_2)^{n^\tau_{i+p}}.\]
Considering the $\V$-monoid of $R_2$ we see that $n^\sigma_{2i-1}+n^\sigma_{2i}=n^\tau_{2i-1}+n^\tau_{2i}$ for every $1\leq i\leq p$, and $n^\sigma_i=n^\tau_i$ for every $2p+1\leq i\leq p+q+1$. Hence $|\sigma|=|\tau|$.

Set $n^{(i)}:=n^\sigma_{2i-1}+n^\sigma_{2i}$ for $1\leq i\leq p$ and $n^{(i)}:=n^\sigma_{i+p}$ for $p+1\leq i\leq q+1$. Clearly $A=\diag(A^{(1)},\dots,A^{(q+1)})$ where each $A^{(i)}$ is a $n^{(i)}\times n^{(i)}$-matrix. Using the Gauss algorithm, we can find a matrix $E\in \Ee_\sigma(R_2)$ such that $EA$ is a diagonal matrix in $\GL_{\sigma\times\tau}(R_2)$. Clearly each diagonal entry of $EA$ is either of the form 
\begin{itemize}
\item $ke^{(i)}_{lm}$ where $k\in K^\times$, $1\leq i\leq p$ and $l,m\in\{1,2\}$, or
\item \(k 1^{(i)}\) where \(k\in K^\times\) and \(p+1\leq i\leq q+1\).
\end{itemize}
It follows that $\overline{EA}\in  \De_{\bar\sigma}(K)\Ef_{\bar\sigma\times\bar\tau}(L(E))\subseteq\H$. Since $\bar E\in \Ee_{\bar \sigma}(L(E))\subseteq \H$, this implies that $\bar A\in \H$.

\bigskip\noindent 
\textbf{Part II}\\
Next we will show that $\psi(\phi(\T_{\sigma}(R_\mu)))\subseteq \H$ for every $\mu\in\{0,1,2\}$ and $\sigma\in \Es_\I$.

\medskip\noindent
\textit{4. Transvections from \(\T_\sigma(R,0)\) and \(\T_\sigma(R,1)\).}\\
Let $\mu\in \{0,1\}$, $\sigma\in\Es_\I$ and \(T=I_\sigma+ABC\in \T_\sigma(R,\mu)\), where
$A$ and $C$ have entries in \(R_\mu\) and \(CA=0\).
Since $\phi$ and $\psi$ are additive, we have
\[
\bar T
=
I_{\bar\sigma}+\bar A\bar B\bar C,
\]
where \(\bar A,\bar C\) have entries in \(K\), \(\bar B\) has entries in
\(L(E)\), and \(\bar C\bar A=0\). It follows from Lemma~\ref{lem:transv} that \(\bar T\in \H\).

\medskip\noindent
\textit{5. Transvections from \(\T_\sigma(R,2)\).}
Let $\sigma\in \Es_\I$ and $T\in \T_\sigma(R,2)$. Then there are matrices
$A\in\Mat_{\sigma\times\tau}(R_2)$, $B\in\Mat_{\tau\times\tau}(R)$ and
$C\in\Mat_{\tau\times\sigma}(R_2)$ where $\tau=(u_1,\dots,u_r)$ such
that \(CA=0\) and
\[
T=I_\sigma+ABC.
\]
As in Part~I, we may assume that \(\sigma\) is ordered (note that
$\tau$ is already ordered). Set
\[
n^{(i)}:=n^\sigma_{2i-1}+n^\sigma_{2i}
\quad(1\leq i\leq p),
\qquad
n^{(i)}:=n^\sigma_{i+p}
\quad(p+1\leq i\leq q+1).
\]
Clearly
\[
A=\operatorname{diag}(A^{(1)},\dots,A^{(q+1)}),
\]
where each $A^{(i)}$ is an $n^{(i)}\times 2$-matrix if
$1\leq i\leq p$, respectively an $n^{(i)}\times 1$-matrix if
$p+1\leq i\leq q+1$. Similarly,
\[
C=\operatorname{diag}(C^{(1)},\dots,C^{(q+1)}),
\]
where each $C^{(i)}$ is a $2\times n^{(i)}$-matrix if
$1\leq i\leq p$, respectively a $1\times n^{(i)}$-matrix if
$p+1\leq i\leq q+1$. The condition $CA=0$ is equivalent to
\[
C^{(i)}A^{(i)}=0
\qquad
(1\le i\le q+1).
\]

We proceed as in the proof of Lemma~\ref{lem:transv}. We can find a
matrix
\[
E=\operatorname{diag}(E^{(1)},\dots,E^{(q+1)})\in\Ee_\sigma(R_2),
\]
where $E^{(i)}$ is an $n^{(i)}\times n^{(i)}$-matrix for each $i$,
such that in
\[
EA=\operatorname{diag}(E^{(1)}A^{(1)},\dots,E^{(q+1)}A^{(q+1)})
\]
each matrix $E^{(i)}A^{(i)}$ is in row echelon form with $r_i$
pivot elements. Hence the last $n^{(i)}-r_i$ rows of each
$E^{(i)}A^{(i)}$ are zero. Since
\[
(C^{(i)}{E^{(i)}}^{-1})(E^{(i)}A^{(i)})
=
C^{(i)}A^{(i)}
=
0,
\]
it follows that the first $r_i$ columns of each
$C^{(i)}{E^{(i)}}^{-1}$ are zero.

As in the proof of Lemma~\ref{lem:transv}, it follows that
\[
E(I_\sigma+ABC)E^{-1}\in \Ee_\sigma(R).
\]
Since $\psi\circ\phi$ maps elementary groups into elementary groups,
this implies that $\bar T\in\H$.
\end{proof}

\begin{remark}\label{rem:one-vertex}
It is easy to see, that if the graph \(E\) has exactly one vertex, then every permutation matrix in $\Ge(L(E))$ can be written as a product of elementary matrices and one invertible diagonal matrix with entries in $K$. Consequently, for one-vertex graphs, Theorem~\ref{thm:lpa2} remains true if one replaces $\Pe_{\sigma\times\tau}(K)$ by \(\De_\sigma(K)\).
\end{remark}

\subsection{Analysis of the generating set}
Until the end of Section 4 we denote $L(E)$ by $L$.

\subsubsection{Definitions}

\begin{definition}\label{def:F+F-}
Let $\sigma,\tau\in \Es_{E^0}$. We denote by $\Ef^+_{\sigma\times\tau}(L)$
(respectively $\Ef^-_{\sigma\times\tau}(L)$) the subset of
$\Ef_{\sigma\times\tau}(L)$ consisting of all block-diagonal matrices
$A=\operatorname{diag}(A_1,\dots,A_\ell)\in\GL_{\sigma\times\tau}(L)$
such that each block $A_i$ is of the form (i) or (ii)
(respectively (i) or (iii)) in Definition~\ref{def:F}.
\end{definition}

\begin{remark}
Note that if $A=\operatorname{diag}(A_1,\dots,A_\ell)\in \Ef^+_{\sigma\times\tau}(L)$, then \[A^{-1}=A^*=\operatorname{diag}(A_1^*,\dots,A_\ell^*)\in \Ef^-_{\tau\times\sigma}(L).\]
Similarly, if $B\in \Ef^-_{\sigma\times\tau}(L)$, then $B^{-1}=B^*\in\Ef^+_{\tau\times\sigma}(L)$.
\end{remark}

\begin{definition}\label{def:fatALL}
We denote by 
\begin{itemize}
\item \(\EE(L)\) the subgroupoid of $\Ge(L)$ generated by all sets \(\Ee_\sigma(L)\) where \(\sigma\in\Es\),
\item \(\F(L)\) the subgroupoid of $\Ge(L)$ generated by all sets \(\Ef_{\sigma\times\tau}(L)\) where \(\sigma,\tau\in\Es\),
\item $\F^+(L)$ (resp. $\F^-(L)$) the subcategory of
$\Ge(L)$ generated by the sets $\Ef^+_{\sigma\times\tau}(L)$ (resp. $\Ef^-_{\sigma\times\tau}(L)$) where $\sigma,\tau\in\Es$,
\item \(\P(L)\) the subgroupoid of $\Ge(L)$ generated by all sets \(\Pe_{\sigma\times\tau}(K)\) where \(\sigma,\tau\in\Es\),
\item $\F\P(L)$ the subgroupoid of $\Ge(L)$ generated by all sets
$\Ef_{\sigma\times\tau}(L)$ and $\Pe_{\sigma\times\tau}(K)$
where $\sigma,\tau\in\Es$,
\item $\widehat{\EE}(L)$ the subgroupoid of $\Ge(L)$ generated by all matrices of the form $AEA^{-1}$ where $A\in \F\P(L)$ and $E\in \EE(L)$ are composable. 
\end{itemize}
For $\mathbf{C}\in\{\EE,\F,\F^+,\F^-,\P,\F\P,\widehat\EE\}$ and $\sigma,\tau\in \Es$ we set $\mathbf{C}_{\sigma\times\tau}(L):=\mathbf{C}(L)\cap \GL_{\sigma\times \tau}(L)$ and $\mathbf{C}_{\sigma}(L):=\mathbf{C}_{\sigma\times\sigma}(L)$.
\end{definition}

\subsubsection{An obvious corollary}
\begin{corollary}\label{cor:lpa2}
Let $\sigma\in \Es$. Then every matrix $A\in \GL_\sigma(L)$ can be written as $A=BC$ where $B\in \widehat{\EE}_\sigma(L)$ and $C\in\F \P_{\sigma}(L)$.
\end{corollary}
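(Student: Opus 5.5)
By Theorem~\ref{thm:lpa2}, every $A\in\GL_\sigma(L)$ can be written as a composition $A=A_1A_2\cdots A_k$ of composable morphisms of $\Ge(L)$. Each factor $A_i$ is an elementary matrix, an element of some $\Ef_{\pi\times\rho}(L)$, an element of some $\Pe_{\pi\times\rho}(K)$, or an inverse of one of these. Inverses of elementary matrices are again elementary, since $E_{ij}(x)^{-1}=E_{ij}(-x)$. Hence each $A_i$ lies either in $\EE(L)$ or in $\F\P(L)$. I will argue exactly as in the proof of Corollary~\ref{cor:main}: move all factors from $\F\P(L)$ to the right, at the cost of conjugating the elementary factors.

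The key observation is that $\widehat\EE(L)$ is normalised by $\F\P(L)$. Take composable $X\in\F\P(L)$ and $Y\in\widehat\EE(L)$, and write $Y=Y_1\cdots Y_m$ with each $Y_j=A_jE_jA_j^{-1}$ (or its inverse, which has the same form with $E_j^{-1}\in\EE(L)$), where $A_j\in\F\P(L)$ and $E_j\in\EE(L)$. Then
\[XYX^{-1}=\prod_j (XA_j)E_j(XA_j)^{-1}.\]
Each $XA_j$ lies in $\F\P(L)$ because $\F\P(L)$ is a subgroupoid, so $XYX^{-1}\in\widehat\EE(L)$. Also $\EE(L)\subseteq\widehat\EE(L)$, by taking $A$ to be the identity $I_\pi$. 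Note that $I_\pi\in\Ef_{\pi\times\pi}(L)$, since it is block-diagonal with blocks of type (i).

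Now I proceed by induction on the number $k$ of factors. I claim that any composable product $A_1\cdots A_k$ with source $\rho$ and target $\pi$ can be written as $BC$ with $B\in\widehat\EE_{\pi}(L)$ and $C\in\F\P_{\pi\times\rho}(L)$. For the inductive step, suppose $A_1\cdots A_{k-1}=B'C'$ and consider the last factor $A_k$.
\begin{itemize}
\item If $A_k\in\F\P(L)$, take $C=C'A_k$ and $B=B'$.
\item If $A_k\in\EE(L)$, write $B'C'A_k=\bigl(B'(C'A_kC'^{-1})\bigr)C'$. Here $C'A_kC'^{-1}\in\widehat\EE(L)$ by the normalisation property, and it is composable with $B'$.
\end{itemize}
Since $\widehat\EE(L)$ consists only of automorphisms (each generator $AEA^{-1}$ has equal source and target), $B$ lies in some isotropy group $\widehat\EE_\pi(L)$. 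For $A\in\GL_\sigma(L)$ the target is $\pi=\sigma$, so $B\in\widehat\EE_\sigma(L)$, and therefore $C=B^{-1}A\in\GL_\sigma(L)\cap\F\P(L)=\F\P_\sigma(L)$.

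There is no real obstacle here. The only point needing care is the bookkeeping of sources and targets: one must check that $C'A_kC'^{-1}$ is composable with $B'$ and that the final objects are both $\sigma$. This follows directly from the groupoid structure.
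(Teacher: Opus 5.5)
Your proof is correct and follows the paper's own argument. Both decompose $A$ via Theorem~\ref{thm:lpa2} into factors from $\EE(L)\subseteq\widehat\EE(L)$ and $\F\P(L)$, then use that $\widehat\EE(L)$ is normalised by $\F\P(L)$ to move all $\F\P(L)$-factors to the right; your induction and source/target bookkeeping just spell out what the paper calls ``clearly''. (In your inductive step, $C'A_kC'^{-1}$ is already a generator of $\widehat\EE(L)$ by definition, so you do not actually need the general normalisation argument there.)
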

\begin{proof}
By Theorem \ref{thm:lpa2} a matrix $A\in \GL_\sigma(L)$ can be written as a composition of matrices from $\widehat{\EE}(L)$ and $\F\P(L)$. Clearly $\widehat{\EE}(L)$ is normalised by $\F\P(L)$, i.e. if $X\in \F\P(L)$ and $Y\in \widehat{\EE}(L)$ are composable, then $XYX^{-1}\in  \widehat{\EE}(L)$. It follows that $A$ can be written as $A=BC$ where $B\in \widehat{\EE}_\sigma(L)$ and $C\in \F\P_{\sigma}(L)$.
\end{proof}

\subsubsection{$\F^+\F^-$ factorisation of $\F(L)$}

Our first goal is to show that any $F\in \F(L)$ can be written as $F=F^+F^-$ where
$F^+\in \F^+(L)$ and
$F^-\in \F^-(L)$. In order to prove this we need two lemmas.

\begin{lemma}\label{lem:F1}
Let $\sigma,\tau\in \Es$ and
$A\in \Ef_{\sigma\times\tau}(L)$. Then there is a $\pi\in \Es$
and matrices
$B\in \Ef^+_{\sigma\times\pi}(L)$ and
$C\in \Ef^-_{\pi\times\tau}(L)$
such that $A=BC$.
\end{lemma}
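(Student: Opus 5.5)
The plan is a blockwise factorisation. Write $A=\operatorname{diag}(A_1,\dots,A_\ell)\in\Ef_{\sigma\times\tau}(L)$, where each block $A_j$ has type (i), (ii) or (iii) in Definition~\ref{def:F}. Because $A$ is block-diagonal, it suffices to factor each block as $A_j=B_jC_j$, with $B_j$ of type (i) or (ii) and $C_j$ of type (i) or (iii). We then set
\[
B=\operatorname{diag}(B_1,\dots,B_\ell),\qquad C=\operatorname{diag}(C_1,\dots,C_\ell).
\]
These are block-diagonal, so $BC=\operatorname{diag}(B_1C_1,\dots,B_\ell C_\ell)=A$. The object $\pi$ is the concatenation of the intermediate signatures $\pi^{(j)}$ of the individual factorisations.

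The factorisation of each block is trivial.
\begin{itemize}
\item If $A_j=(v)$ is of type (i), take $B_j=C_j=(v)$ with $\pi^{(j)}=(v)$.
\item If $A_j=(f_{i,1},\dots,f_{i,m_i})$ is of type (ii), take $B_j=A_j$ and $C_j=I_{\pi^{(j)}}$. Here $\pi^{(j)}=(r(f_{i,1}),\dots,r(f_{i,m_i}))$ is the column signature of $A_j$, and $C_j$ is a direct sum of $1\times 1$ blocks $(r(f_{i,k}))$, each of type (i).
\item If $A_j$ is of type (iii), take $B_j=I_{\pi^{(j)}}$, a direct sum of type (i) blocks, with $\pi^{(j)}=(r(f_{i,1}),\dots,r(f_{i,m_i}))$ the row signature of $A_j$, and take $C_j=A_j$.
\end{itemize}
The row signature of $B_j$ and the column signature of $C_j$ agree with those of $A_j$, so $B$ has row signature $\sigma$ and $C$ has column signature $\tau$.

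It remains to check that $B$ and $C$ are genuine morphisms, that is, that $B\in\GL_{\sigma\times\pi}(L)$ and $C\in\GL_{\pi\times\tau}(L)$.
\begin{itemize}
\item Each entry must lie in the correct corner $\rho_aL\rho'_b$. This holds because $f=s(f)f\,r(f)$ and $f^*=r(f)f^*s(f)$.
\item Each block must be invertible with inverse its adjoint. For a type (ii) block, $(f_{i,1},\dots,f_{i,m_i})(f_{i,1},\dots,f_{i,m_i})^*=\sum_k f_{i,k}f_{i,k}^*=v_i$ by relation (iv), since $v_i$ is regular. In the other order, $(f_{i,k}^*f_{i,l})_{k,l}=\operatorname{diag}(r(f_{i,1}),\dots,r(f_{i,m_i}))$ by relation (iii). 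The type (iii) computation is symmetric.
\end{itemize}
So every block is invertible, and hence so are the block-diagonal matrices $B$ and $C$.

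I expect no real obstacle. The only point needing care is bookkeeping: the intermediate signature $\pi$ must be assembled in the right order, and the identity factors must be read as diagonal matrices made of type (i) blocks rather than as single blocks. This is exactly what Definition~\ref{def:F} permits.
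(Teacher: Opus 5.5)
Your proposal is correct and matches the paper's proof exactly. The paper uses the same blockwise choice: $B_j=C_j=(v)$ for type (i); $B_j=A_j$ and $C_j=\operatorname{diag}(r(f_{i,1}),\dots,r(f_{i,m_i}))$ for type (ii); and the mirror choice for type (iii). Your explicit invertibility check via relations (iii) and (iv) fills in what the paper leaves as ``clearly''.
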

\begin{proof}
Write
$A=\operatorname{diag}(A_1,\dots,A_\ell)\in\GL_{\sigma\times\tau}(L)$
where each block $A_i$ is of the form (i), (ii) or (iii) in
Definition~\ref{def:F}. Define the matrix
$B=\operatorname{diag}(B_1,\dots,B_\ell)$ by
\[
B_j=\begin{cases}
(v), & \text{if }A_j=(v)\text{ where }v\in E^0,\\[2pt]
(f_{i,1},\dots,f_{i,m_i}), & \text{if }
A_j=(f_{i,1},\dots,f_{i,m_i})\text{ where }1\le i\le p,\\[2pt]
\operatorname{diag}\bigl(r(f_{i,1}),\dots,r(f_{i,m_i})\bigr), & \text{if }
A_j=(f_{i,1},\dots,f_{i,m_i})^*\text{ where }1\le i\le p.
\end{cases}
\]
Then clearly
$B\in \Ef^+_{\sigma\times\pi}(L)$ for some $\pi\in\Es$.
Define the matrix
$C=\operatorname{diag}(C_1,\dots,C_\ell)$ by
\[
C_j=\begin{cases}
(v), & \text{if }A_j=(v)\text{ where }v\in E^0,\\[2pt]
\operatorname{diag}\bigl(r(f_{i,1}),\dots,r(f_{i,m_i})\bigr), & \text{if }
A_j=(f_{i,1},\dots,f_{i,m_i})\text{ where }1\le i\le p,\\[2pt]
(f_{i,1},\dots,f_{i,m_i})^*, & \text{if }
A_j=(f_{i,1},\dots,f_{i,m_i})^*\text{ where }1\le i\le p.
\end{cases}
\]
Then clearly
$C\in \Ef^-_{\pi\times\tau}(L)$, and moreover
\[
A=\operatorname{diag}(B_1C_1,\dots,B_\ell C_\ell)=BC.
\]
\end{proof}

\begin{lemma}\label{lem:F2}
Let
$A\in \Ef^-_{\sigma\times\tau}(L)$ and
$B\in \Ef^+_{\tau\times\pi}(L)$, where
$\sigma,\tau,\pi\in \Es$. Then
$AB\in \Ef_{\sigma\times\pi}(L)$.
\end{lemma}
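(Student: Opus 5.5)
The plan is to analyse the product $AB$ block by block, using the Cuntz–Krieger relations.

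Write $A=\operatorname{diag}(A_1,\dots,A_\ell)$, where each $A_j$ is of type (i) or (iii) in Definition~\ref{def:F}. Write $B=\operatorname{diag}(B_1,\dots,B_k)$, where each $B_j$ is of type (i) or (ii). The column partition of $A$ and the row partition of $B$ are both partitions of the index set $\{1,\dots,|\tau|\}$ into consecutive intervals.

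\begin{itemize}
\item A type (i) block of $A$ occupies one column, and so does a type (iii) block $(f_{i,1},\dots,f_{i,m_i})^*$.
\item A type (i) block of $B$ occupies one row, while a type (ii) block $(f_{i,1},\dots,f_{i,m_i})$ occupies one row and $m_i$ columns.
\item Thus every block of $A$ has exactly one column, and every block of $B$ has exactly one row.
\end{itemize}

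Hence there is a bijection between the blocks of $A$ and the blocks of $B$: the $j$-th column of $A$ meets the $j$-th row of $B$. It follows that $AB=\operatorname{diag}(A_1B_1,\dots,A_\ell B_\ell)$ with $\ell=k=|\tau|$, where $A_jB_j$ is the product of an $a_j\times 1$ column and a $1\times b_j$ row. The entry of $\tau$ shared by them is $\tau_j$.

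Next we classify the four possible products $A_jB_j$, each lying in $\tau$-compatible corners.

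\begin{itemize}
\item If both blocks are of type (i), then $\tau_j=v$ and the product is $(v)$, which is of type (i).
\item If $A_j=(v)$ and $B_j=(f_{i,1},\dots,f_{i,m_i})$, then $v=\tau_j=v_i$ and the product is $(f_{i,1},\dots,f_{i,m_i})$, of type (ii).
\item If $A_j=(f_{i,1},\dots,f_{i,m_i})^*$ and $B_j=(v)$, then $v=v_i$ and the product is of type (iii).
\item If $A_j=(f_{i,1},\dots,f_{i,m_i})^*$ and $B_j=(f_{i',1},\dots,f_{i',m_{i'}})$, then $v_i=\tau_j=v_{i'}$, so $i=i'$. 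The product is the $m_i\times m_i$ matrix $(f_{i,s}^*f_{i,t})_{s,t}=\operatorname{diag}(r(f_{i,1}),\dots,r(f_{i,m_i}))$ by relation (iii) of Definition~\ref{def:lpa}. This is a block-diagonal composite of $m_i$ blocks of type (i).
\end{itemize}

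Refining the last case into $1\times1$ blocks, $AB$ is block diagonal with all blocks of the forms (i), (ii) or (iii). It is invertible as a product of invertible matrices, and it lies in $\Mat_{\sigma\times\pi}(L)$. Therefore $AB\in\Ef_{\sigma\times\pi}(L)$.

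The only delicate point is the bookkeeping in the first step: checking that block boundaries match, and that the vertex in each shared position of $\tau$ forces $i=i'$ in the mixed case. Everything else is direct computation with the Leavitt path algebra relations.
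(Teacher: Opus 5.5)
Your proposal is correct and follows essentially the same route as the paper: each block of $A$ has exactly one column and each block of $B$ exactly one row, so $AB=\operatorname{diag}(A_1B_1,\dots,A_\ell B_\ell)$ with $\ell=|\tau|$, and the four block products are checked case by case. Your computation of the (iii)$\cdot$(ii) case as $\operatorname{diag}(r(f_{i,1}),\dots,r(f_{i,m_i}))$ is slightly more precise than the paper's phrasing ``$\operatorname{diag}(v,\dots,v)$'', which is only accurate when all the $r(f_{i,s})$ coincide; the conclusion is unaffected, since refining into $1\times 1$ blocks gives blocks of type (i) in either case.
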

\begin{proof}
Write
$A=\operatorname{diag}(A_1,\dots,A_\ell)\in\GL_{\sigma\times\tau}(L)$,
where each block $A_i$ is of the form (i) or (iii) in
Definition~\ref{def:F}. Similarly write
$B=\operatorname{diag}(B_1,\dots,B_m)\in\GL_{\tau\times\pi}(L)$,
where each block $B_i$ is of the form (i) or (ii) in
Definition~\ref{def:F}. Then clearly $\ell=m=|\tau|$. It follows that
\[
AB=\operatorname{diag}(A_1B_1,\dots,A_\ell B_\ell).
\]
One checks easily that each of the blocks $A_iB_i$ is of the
form (i), (ii) or (iii) in Definition~\ref{def:F}, or of the form
$\operatorname{diag}(v,\dots,v)$ where $v\in E^0$. Thus
\[
AB\in \Ef_{\sigma\times\pi}(L).
\]
\end{proof}

\begin{proposition}\label{prop:F+F-}
Let $F\in \F(L)$. Then there is an
$F^+\in \F^+(L)$ and an
$F^-\in \F^-(L)$
such that $F=F^+F^-$.
\end{proposition}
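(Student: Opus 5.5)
Every $F\in\F(L)$ is a finite composition $F=A_1A_2\cdots A_k$ in which each $A_j$ lies in some $\Ef_{\sigma\times\tau}(L)$ or is the inverse of such a matrix. The proof will be an induction on $k$ that moves all ``$+$'' factors to the left and all ``$-$'' factors to the right, using Lemmas~\ref{lem:F1} and~\ref{lem:F2} as rewriting rules.

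First we remove inverses. If $A\in\Ef_{\sigma\times\tau}(L)$, then its blocks are of the forms (i)--(iii) in Definition~\ref{def:F}. Since $vv=v$ and $(f_{i,1},\dots,f_{i,m_i})^*(f_{i,1},\dots,f_{i,m_i})=\diag(r(f_{i,1}),\dots,r(f_{i,m_i}))$, we get $A^*A=I_\tau$ and, by relation (iv) for the regular vertex $v_i$, $AA^*=I_\sigma$. Hence $A^{-1}=A^*$, and $A^*$ again lies in $\Ef_{\tau\times\sigma}(L)$, because the star exchanges block types (ii) and (iii) and fixes (i). So every $F\in\F(L)$ is a product of matrices from the sets $\Ef(L)$ alone. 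By Lemma~\ref{lem:F1}, each factor splits as $B_jC_j$ with $B_j\in\Ef^+(L)$ and $C_j\in\Ef^-(L)$. Therefore $F$ is a word in matrices from the sets $\Ef^+(L)$ and $\Ef^-(L)$.

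Next we normalise this word, proving by induction on the number of letters that any such word equals $F^+F^-$ with $F^+\in\F^+(L)$ and $F^-\in\F^-(L)$. It suffices to show that a product $F^+F^-X$, where $X\in\Ef^+(L)\cup\Ef^-(L)$, can again be written in this form. If $X\in\Ef^-(L)$, then $F^-X\in\F^-(L)$ and we are done. If $X\in\Ef^+(L)$, write $F^-=Y_1\cdots Y_s$ with $Y_t\in\Ef^-(L)$. By Lemma~\ref{lem:F2}, $Y_sX\in\Ef(L)$, and by Lemma~\ref{lem:F1}, $Y_sX=X'Y_s'$ with $X'\in\Ef^+(L)$ and $Y_s'\in\Ef^-(L)$. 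Repeat this with $Y_{s-1}X'$, and so on, moving the $\Ef^+$ letter leftward through all $s$ letters of $F^-$. It is then absorbed into $F^+$, and what remains to its right is a word in $\Ef^-(L)$. This terminates after $s$ steps. The identity word is covered by identity matrices $I_\sigma$, which lie in $\Ef^+(L)\cap\Ef^-(L)$ since all their blocks are of type (i).

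The only substantive check is that $\Ef(L)$ is closed under the star, i.e.\ the identification $A^{-1}=A^*$ above. The rest is bookkeeping with the two lemmas, where the main care needed is matching the objects (signatures) of the factors so that all compositions are defined.
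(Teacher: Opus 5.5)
Your proposal is correct and follows the paper's own argument: split each generator into an $\Ef^+$ factor followed by an $\Ef^-$ factor via Lemma~\ref{lem:F1}, then repeatedly rewrite an adjacent $\Ef^-\Ef^+$ pair as an $\Ef^+\Ef^-$ pair using Lemmas~\ref{lem:F2} and~\ref{lem:F1} until the word is sorted. Your explicit check that $A^{-1}=A^*\in\Ef_{\tau\times\sigma}(L)$ for $A\in\Ef_{\sigma\times\tau}(L)$ fills in a point the paper passes over: it writes an element of the groupoid $\F(L)$ as a product of $\Ef$-matrices without accounting for inverses of generators.
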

\begin{proof}
By Lemma~\ref{lem:F1} we can write $F=F_1\cdots F_n$, where each
$F_i$ belongs to $\Ef^+_{\sigma\times\tau}(L)$ or to
$\Ef^-_{\sigma\times\tau}(L)$ for some $\sigma,\tau\in\Es$.
It follows from Lemmas~\ref{lem:F1} and~\ref{lem:F2} that if
$F_i\in \Ef^-_{\sigma\times\tau}(L)$ and
$F_{i+1}\in \Ef^+_{\tau\times\pi}(L)$ for some
$1\le i\le n-1$ and $\sigma,\tau,\pi\in\Es$, then
\[
F_iF_{i+1}=F'_iF'_{i+1},
\]
where
$F'_i\in \Ef^+_{\sigma\times\tau'}(L)$ and
$F'_{i+1}\in \Ef^-_{\tau'\times\pi}(L)$
for some $\tau'\in\Es$. After applying a finite number of such
``transpositions'' we obtain the desired factorisation of $F$.
\end{proof}

\subsubsection{$\F^+\P\F^-$ factorisation of $\F\P(L)$}
Next we want to prove that any $A\in \F\P(L)$ can be written as $A=F^+PF^-$ where $F^+\in \F^+(L)$, $P\in \P(L)$ and $F^-\in \F^-(L)$. We need three lemmas.

\begin{lemma}\label{lem:commute1}
Let $P\in \Pe_{\sigma\times\tau}(K)$ and $F\in \Ef^+_{\tau\times\rho}(L)$. Then there is an $\omega\in\Es$ and matrices $P'\in \Pe_{\omega\times\rho}(K)$ and $F'\in \Ef^+_{\sigma\times\omega}(L)$ such that $PF=F'P'$.
\end{lemma}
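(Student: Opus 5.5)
The idea is to push the permutation matrix $P$ through $F$ block by block. Scaling constants commute with the blocks, and the permutation of rows is realised by permuting the blocks of $F$.

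Write $F=\operatorname{diag}(F_1,\dots,F_\ell)\in\Ef^+_{\tau\times\rho}(L)$, where each block $F_j$ is either $(v)$ with $v\in E^0$ or $(f_{i,1},\dots,f_{i,m_i})$ with $1\le i\le p$. Each block occupies exactly one row of $F$, so $\ell=|\tau|$. The columns of $\rho$ split into consecutive groups $\rho^{(1)},\dots,\rho^{(\ell)}$, where $\rho^{(j)}$ is the column signature of $F_j$: either $(v)$ or $(r(f_{i,1}),\dots,r(f_{i,m_i}))$. The row entry is $\tau_j=v$ or $v_i$.

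Now let $P\in\Pe_{\sigma\times\tau}(K)$ with permutation $\pi$ and scalars $k_1,\dots,k_n$, so that $P_{ij}=\delta_{j,\pi(i)}k_i\sigma_i$ and $\sigma_i=\tau_{\pi(i)}$. Then row $i$ of $PF$ equals $k_i$ times row $\pi(i)$ of $F$. That row has the entries of the block $F_{\pi(i)}$ in the column group $\rho^{(\pi(i))}$ and zeros elsewhere.

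Define $F'=\operatorname{diag}(F_{\pi(1)},\dots,F_{\pi(n)})$ and let $\omega$ be the concatenation $\rho^{(\pi(1))},\dots,\rho^{(\pi(n))}$. The row signature of $F'$ is $(\tau_{\pi(1)},\dots,\tau_{\pi(n)})=\sigma$, so $F'\in\Mat_{\sigma\times\omega}(L)$ is block diagonal with blocks of type (i) or (ii). Its inverse is $F'^*=\operatorname{diag}(F_{\pi(1)}^*,\dots)$, because each block satisfies $F_jF_j^*=\tau_j$ and $F_j^*F_j=I_{\rho^{(j)}}$. This uses relation (iv) for regular vertices and relation (iii) for the reverse product. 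Hence $F'\in\Ef^+_{\sigma\times\omega}(L)$.

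Let $P'\in\Mat_{\omega\times\rho}(K)$ be the block matrix that sends the column group $\rho^{(\pi(i))}$ of $\rho$ to the $i$-th group of $\omega$ by the identity pattern, scaled by $k_i$. Each entry is then $k_i$ times the appropriate vertex idempotent. This $P'$ is a generalised permutation matrix for the permutation of indices that moves the groups as blocks, and its components match because the groups are copied verbatim. So $P'\in\Pe_{\omega\times\rho}(K)$, with inverse given by scalars $k_i^{-1}$ and the inverse block permutation.

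It remains to check that $F'P'=PF$. Row $i$ of $F'P'$ is the block $F_{\pi(i)}$ placed in the $i$-th group of $\omega$, multiplied by $P'$. This puts $k_iF_{\pi(i)}$ into the column group $\rho^{(\pi(i))}$, because the scalars in $K$ are central and the idempotents $r(f)$ and $v$ absorb correctly. That is exactly row $i$ of $PF$.

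The only delicate point is bookkeeping: the permutation underlying $P'$ must be the induced block permutation on $\{1,\dots,|\rho|\}$, and the condition $\omega_a=\rho_{\pi'(a)}$ in the definition of $\Pe$ must be verified. Since each group of $\omega$ is a verbatim copy of a group of $\rho$, this check is routine.
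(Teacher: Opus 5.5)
Your proposal is correct and follows essentially the same route as the paper. The paper also sets $F'=\operatorname{diag}(F_{\pi(1)},\dots,F_{\pi(n)})$. The only difference is that the paper obtains $P'$ as $Q^{-1}$ for a block-permutation matrix $Q\in\Pe_{\rho\times\omega}(K)$ with $PFQ=F'$, whereas you write down $P'$ directly (scalars $k_i$, block permutation induced by $\pi$) and check $F'P'=PF$ and the invertibility of $F'$ by hand.
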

\begin{proof}
Let $\pi$ be a permutation such that $\sigma_i=\tau_{\pi(i)}$ for all $1\leq i\leq n=|\sigma|=|\tau|$, and
\[P_{ij}=\delta_{j,\pi(i)}k_i\sigma_i\quad(1\leq i,j\leq n),\]
where $k_1,\dots,k_n\in K^\times$. Write
\[F=\operatorname{diag}(F_1,\dots,F_m),\]
where each block $F_i$ is of the form (i) or (ii) in Definition~\ref{def:F}. Clearly $m=|\tau|=n$. Multiplying $F$ on the left by $P$ has the effect of multiplying each block $F_{\pi(i)}$ with $k_i$ and moving it to the $i$-th row. Clearly there is a $\omega\in \Es$ and a $Q\in \Pe_{\rho\times \omega}(K)$ such that \[PFQ=\diag(F_{\pi(1)},\dots,F_{\pi(n)}).\]
Thus $PF=F'P'$ where $F':=PFQ\in \Ef^+_{\sigma\times\omega}(L)$ and $P':=Q^{-1}\in \Pe_{\omega\times \rho}(K)$.
\end{proof}

\begin{lemma}\label{lem:commute2}
Let $F\in \Ef^-_{\sigma\times\tau}(L)$ and $P\in \Pe_{\tau\times\rho}(K)$. Then there is an $\omega\in\Es$ and matrices $P'\in \Pe_{\sigma\times\omega}(K)$ and $F'\in \Ef^-_{\omega\times\rho}(L)$ such that $FP=P'F'$.
\end{lemma}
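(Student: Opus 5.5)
The plan is to derive Lemma~\ref{lem:commute2} from Lemma~\ref{lem:commute1} by taking inverses. This uses the fact that the inverse of a matrix in $\Ef^-$ lies in $\Ef^+$ and is given by $*$, together with the fact that the inverse of a permutation matrix is again a permutation matrix.

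Let $F\in \Ef^-_{\sigma\times\tau}(L)$ and $P\in \Pe_{\tau\times\rho}(K)$. By the remark following Definition~\ref{def:F+F-}, $F^{-1}=F^*\in\Ef^+_{\tau\times\sigma}(L)$. Next I verify that $P^{-1}\in \Pe_{\rho\times\tau}(K)$. If $P_{ij}=\delta_{j,\pi(i)}k_i\tau_i$, then $P^{-1}$ has entry $k_i^{-1}\tau_i$ at position $(\pi(i),i)$ and zeros elsewhere. Since $\tau_i=\rho_{\pi(i)}$, this is a generalised permutation matrix for the permutation $\pi^{-1}$, with scalars $k_{\pi^{-1}(j)}^{-1}$. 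This is routine to check.

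Now apply Lemma~\ref{lem:commute1} to the product $P^{-1}F^{-1}$, where $P^{-1}\in\Pe_{\rho\times\tau}(K)$ and $F^{-1}\in\Ef^+_{\tau\times\sigma}(L)$. It gives an $\omega\in\Es$ together with $Q\in\Pe_{\omega\times\sigma}(K)$ and $G\in\Ef^+_{\rho\times\omega}(L)$ such that $P^{-1}F^{-1}=GQ$. Inverting both sides gives $FP=Q^{-1}G^{-1}$.

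Finally set $P':=Q^{-1}\in\Pe_{\sigma\times\omega}(K)$, which is a permutation matrix by the same inversion remark. Also set $F':=G^{-1}=G^*\in\Ef^-_{\omega\times\rho}(L)$, again by the remark after Definition~\ref{def:F+F-}. Then $FP=P'F'$, as required.

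The argument has no real obstacle. The only point needing care is the bookkeeping of signatures: the source and target objects must match the statement, so that $P'\in\Pe_{\sigma\times\omega}(K)$ and $F'\in\Ef^-_{\omega\times\rho}(L)$.

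Alternatively, one could mimic the proof of Lemma~\ref{lem:commute1} directly. Right multiplication by $P$ permutes and scales the block columns of $F$, and a suitable permutation matrix on the left restores block-diagonal form. The inversion argument avoids repeating that computation, so I would use it.
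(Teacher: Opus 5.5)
Your argument is correct, but it takes a different route from the paper. The paper's proof of Lemma~\ref{lem:commute2} just says ``see the proof of Lemma~\ref{lem:commute1}'', so it intends the direct computation with rows and columns interchanged. For $F\in\Ef^-_{\sigma\times\tau}(L)$ the blocks of types (i) and (iii) are column vectors, one per column. Right multiplication by $P$ therefore permutes these blocks and scales them by the $k_i$. A suitable permutation matrix on the left, which reorders the rows and rescales by $k_i^{-1}$, then restores block-diagonal form with the original blocks.

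You instead obtain the lemma formally from Lemma~\ref{lem:commute1} by inverting $P^{-1}F^{-1}=GQ$. This uses only two facts: inversion interchanges $\Ef^+$ and $\Ef^-$ (the remark after Definition~\ref{def:F+F-}), and $\Pe$ is closed under inversion with source and target swapped. You check the latter correctly: $P^{-1}$ is the generalised permutation matrix for $\pi^{-1}$ with scalars $k_{\pi^{-1}(j)}^{-1}$.

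Your route avoids redoing the block bookkeeping and makes the duality between the two lemmas explicit. The paper's direct route does not depend on Lemma~\ref{lem:commute1} or on the inverse-is-adjoint remark. It also yields explicit $\omega$, $P'$ and $F'$, with $F'$ consisting of the blocks of $F$ merely reordered.
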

\begin{proof}
See the proof of Lemma \ref{lem:commute1}.
\end{proof}

\begin{lemma}\label{lem:commute3}
Let $F^+_1,F^+_2\in \F^+(L)$, $P_1,P_2\in \P(L)$ and $F^-_1,F^-_2\in \F^-(L)$ such the composition $F^+_1P_1F^-_1F^+_2P_2F^-_2$ is defined. Then there is an $F^+\in \F^+(L)$, a $P\in \P(L)$ and an $F^-\in \F^-(L)$ such that 
\[F^+_1P_1F^-_1F^+_2P_2F^-_2=F^+PF^-.\]
\end{lemma}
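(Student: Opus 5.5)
The key step is to rewrite the middle factor $F^-_1F^+_2$, using Proposition~\ref{prop:F+F-}, and then to push the permutation parts outward using Lemmas~\ref{lem:commute1} and~\ref{lem:commute2}. Since $\F^-(L)$ and $\F^+(L)$ are subcategories of $\F(L)$, the composite $F^-_1F^+_2$ lies in $\F(L)$. By Proposition~\ref{prop:F+F-} we can write $F^-_1F^+_2=G^+G^-$ with $G^+\in\F^+(L)$ and $G^-\in\F^-(L)$. The product then becomes
\[
F^+_1P_1G^+G^-P_2F^-_2 .
\]

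Next I move $P_1$ past $G^+$. Write $G^+$ as a composite of generators $B_1\cdots B_k$ with $B_j\in\Ef^+$. Write $P_1$ as a composite of generators from the sets $\Pe_{\sigma\times\tau}(K)$ and their inverses. The inverse of a permutation matrix is again a permutation matrix, because $P_{ij}=\delta_{j,\pi(i)}k_i\sigma_i$ has inverse $\delta_{i,\pi(j)}k_j^{-1}\tau_i$. So $P_1$ is a product of generalised permutation matrices.

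Now apply Lemma~\ref{lem:commute1} repeatedly. A single permutation matrix $Q$ followed by $B_1$ can be rewritten as $B_1'Q'$ with $B_1'\in\Ef^+$ and $Q'$ a permutation matrix. Iterating over the factors $B_1,\dots,B_k$ and over the permutation factors of $P_1$ (an induction on the total number of factors) gives
\[
P_1G^+=H^+P_1'
\]
with $H^+\in\F^+(L)$ and $P_1'\in\P(L)$. Symmetrically, Lemma~\ref{lem:commute2} applied repeatedly gives
\[
G^-P_2=P_2'H^-
\]
with $P_2'\in\P(L)$ and $H^-\in\F^-(L)$.

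Setting $F^+=F^+_1H^+$, $P=P_1'P_2'$ and $F^-=H^-F^-_2$ gives the required decomposition. These composites lie in the right places because $\F^\pm(L)$ are subcategories, hence closed under composition, and $\P(L)$ is a groupoid. Composability is automatic, since all rewritten factors keep the same outer source and target objects.

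The main obstacle is the bookkeeping in the induction. Elements of $\P(L)$ and $\F^\pm(L)$ are composites of generators, and the intermediate objects change at each swap. One must check that Lemmas~\ref{lem:commute1} and~\ref{lem:commute2} apply to every individual adjacent pair. For $\P(L)$ this needs the observation above that inverses of generators are again generalised permutation matrices. For $\F^\pm(L)$ no inverses occur, since these are only subcategories.
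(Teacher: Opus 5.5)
Your proposal is correct and is essentially the paper's proof: you rewrite $F^-_1F^+_2$ as $\hat F^+\hat F^-$ via Proposition~\ref{prop:F+F-}, then commute $P_1$ past $\hat F^+$ and $\hat F^-$ past $P_2$ using Lemmas~\ref{lem:commute1} and~\ref{lem:commute2}. Your extra bookkeeping fills in details the paper leaves implicit: that inverses of generalised permutation matrices are again such matrices, and the induction extending the two commutation lemmas from single generators to composites.
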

\begin{proof}
Clearly
\begin{align*}
&F^+_1P_1F^-_1F^+_2P_2F^-_2\\
=&F^+_1P_1\hat F^+\hat F^- P_2F^-_2\\
=&F^+_1\tilde F^+\tilde P_1\tilde P_2\tilde F^- F^-_2\\
=&F^+PF^-
\end{align*}
by Proposition \ref{prop:F+F-} and Lemmas \ref{lem:commute1} and \ref{lem:commute2} where $\hat F^+, \tilde F^+\in\F^+(L)$, $\tilde P_1,\tilde P_2\in \P(L)$, $\hat F^-, \tilde F^-\in\F^-(L)$, $F^+=F^+_1\tilde F^+$, $P= \tilde P_1\tilde P_2$ and $F^-=\tilde F^- F^-_2$.
\end{proof}

\begin{proposition}\label{prop:FPnormal}
Any $A\in \F\P(L)$ can be written as $A=F^+PF^-$ where $F^+\in \F^+(L)$, $P\in \P(L)$ and $F^-\in \F^-(L)$.
\end{proposition}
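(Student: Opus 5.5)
We argue by induction on the number of generators in a word representing $A$. By definition, every $A\in\F\P(L)$ is a finite composition $A=G_1G_2\cdots G_k$. Each $G_i$ is a matrix from some $\Ef_{\sigma\times\tau}(L)$, from some $\Pe_{\sigma\times\tau}(K)$, or the inverse of such a matrix. Inverses add nothing new. The inverse of a permutation matrix in $\Pe_{\sigma\times\tau}(K)$ is again a permutation matrix in $\Pe_{\tau\times\sigma}(K)$. The inverse of a matrix in $\Ef_{\sigma\times\tau}(L)$ is its adjoint $A^*$, which lies in $\Ef_{\tau\times\sigma}(L)$, since the adjoint swaps blocks of types (ii) and (iii) and fixes type (i). Hence we may assume each $G_i$ lies in some $\Ef_{\sigma\times\tau}(L)$ or some $\Pe_{\sigma\times\tau}(K)$.

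First we put each single generator into the form $F^+PF^-$. If $G\in\Ef_{\sigma\times\tau}(L)$, Lemma~\ref{lem:F1} gives $G=BC$ with $B\in\Ef^+_{\sigma\times\pi}(L)$ and $C\in\Ef^-_{\pi\times\tau}(L)$. We then insert $P=I_\pi$, which lies in $\Pe_{\pi\times\pi}(K)$ since the identity is a permutation matrix with $\pi=\id$ and all $k_i=1$. This gives $G=B\,I_\pi\,C$. If $G\in\Pe_{\sigma\times\tau}(K)$, we write $G=I_\sigma\,G\,I_\tau$. Here $I_\sigma$ lies in $\Ef^+_{\sigma\times\sigma}(L)$ and $I_\tau$ lies in $\Ef^-_{\tau\times\tau}(L)$, because $I_\sigma=\diag((\sigma_1),\dots,(\sigma_n))$ consists only of blocks of type (i).

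For the inductive step, suppose $G_1\cdots G_{k-1}=F_1^+P_1F_1^-$ and $G_k=F_2^+P_2F_2^-$. Lemma~\ref{lem:commute3} then gives $A=F^+PF^-$ with $F^+\in\F^+(L)$, $P\in\P(L)$ and $F^-\in\F^-(L)$, which completes the induction.

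Essentially no step is a real obstacle; Lemma~\ref{lem:commute3} carries all of the substance. The only points to check are the two bookkeeping facts above. The first is that inverses of generators are again generators of the same kind. The second is that identity matrices are admissible fillers in $\F^+(L)$, $\P(L)$ and $\F^-(L)$, so that every single generator fits the required three-factor shape.
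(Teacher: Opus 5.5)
Your proof is correct and follows essentially the paper's route: both rest on Lemma~\ref{lem:F1} (through Proposition~\ref{prop:F+F-}) and on Lemmas~\ref{lem:commute1}, \ref{lem:commute2} and \ref{lem:commute3}, and the only difference is bookkeeping---the paper first writes $A$ as an alternating word $P_0F_1^+F_1^-P_1\cdots F_n^+F_n^-P_n$ and moves the permutation factors into place before merging with Lemma~\ref{lem:commute3}, whereas you pad each single generator with identity matrices and merge by induction. Your explicit checks that inverses of generators are again generators, and that identity matrices lie in $\Ef^+_{\sigma\times\sigma}(L)$, $\Ef^-_{\tau\times\tau}(L)$ and $\Pe_{\pi\times\pi}(K)$, supply details the paper leaves implicit.
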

\begin{proof}
Let $A\in \F\P(L)$. In view of Proposition \ref{prop:F+F-}, $A$ can be written as 
\[A=P_0F^+_1F^-_1P_1\dots F^+_nF^-_nP_n\]
where each $P_i\in \P(L)$, $F_i^+\in \F^+(L)$ and $F_i^-\in \F^-(L)$. It follows from Lemmas \ref{lem:commute1} and \ref{lem:commute2} that $A$ can be written as 
\[A=\hat F^+_1\hat P_1F^-_1\dots \hat F^+_n\hat P_n\hat F^-_n\]
where each $\hat P_i\in \P(L)$, $\hat F_i^+\in \F^+(L)$ and $\hat F_i^-\in \F^-(L)$. It follows from Lemma \ref{lem:commute3} that $A=F^+PF^-$ where $F^+\in \F^+(L)$, $P\in \P(L)$ and $F^-\in \F^-(L)$.
\end{proof}

\subsubsection{Another description of the groups $\widehat\EE_\sigma(L)$}
Let $\sigma,\tau\in \Es$ and $F\in \F^+_{\sigma\times\tau}(L)$. Then we denote the group isomorphism 
\begin{align*}
\GL_\sigma(L)&\to\GL_\tau(L)\\
A&\mapsto F^{-1}AF
\end{align*}
by $\theta_F$. Our next goal is to describe the groups $\widehat\Ee_\sigma(L)$ using the isomorphisms $\theta_F$ defined above. 

\begin{lemma}\label{lem:E-plus-conjugation}
Let \(\sigma,\tau\in\Es\) and \(F\in \Ef^+_{\sigma\times\tau}(L)\).
Then $F^{-1}\,\Ee_\sigma(L)\,F
\subseteq
\Ee_\tau(L)$.
\end{lemma}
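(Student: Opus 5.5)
The plan is to check the statement on generators. Since $\Ee_\sigma(L)$ is generated by the elementary matrices $E_{ij}(x)$ with $i\neq j$ and $x\in\sigma_iL\sigma_j$, and conjugation by $F$ is a group isomorphism $\GL_\sigma(L)\to\GL_\tau(L)$, it suffices to show that $F^{-1}E_{ij}(x)F\in\Ee_\tau(L)$ for each such generator. By the remark following Definition~\ref{def:F+F-}, $F^{-1}=F^*$. Since $F^{-1}I_\sigma F=I_\tau$, we get
\[
F^{-1}E_{ij}(x)F=I_\tau+F^*e_{ij}(x)F .
\]

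Next I use the block structure of $F$. Write $F=\operatorname{diag}(F_1,\dots,F_\ell)$, where each block is of the form (i) or (ii) in Definition~\ref{def:F}. Each such block is a single row, so $\ell=|\sigma|$ and the $k$-th row of $F$ is exactly the block $F_k$. This block sits in a set $S_k$ of consecutive column indices, and the sets $S_1,\dots,S_\ell$ partition $\{1,\dots,|\tau|\}$.

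The matrix $e_{ij}(x)F$ has only one nonzero row, namely row $i$, which equals $xF_j$ placed in the columns $S_j$. Multiplying on the left by $F^*$ places the column $F_i^*$ in the rows $S_i$. Hence $N:=F^*e_{ij}(x)F$ is supported in $S_i\times S_j$, and its $(S_i,S_j)$-block is $F_i^*xF_j$. Since $N\in\Mat_{\tau\times\tau}(L)$, each entry satisfies $N_{kl}\in\tau_kL\tau_l$.

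Finally, because $i\neq j$, the index sets $S_i$ and $S_j$ are disjoint. Consequently $e_{kl}(a)e_{k'l'}(b)=0$ whenever $k,k'\in S_i$ and $l,l'\in S_j$, since $l\notin S_i$. It follows that
\[
I_\tau+N=\prod_{k\in S_i,\ l\in S_j}E_{kl}(N_{kl}),
\]
where the factors commute and all cross terms vanish. Each factor is an elementary matrix in $\Ee_\tau(L)$, because $k\neq l$ and $N_{kl}\in\tau_kL\tau_l$. This gives $F^{-1}E_{ij}(x)F\in\Ee_\tau(L)$, as required.

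The only step needing care is the bookkeeping: verifying that $\ell=|\sigma|$ (each block of an $\Ef^+$ matrix has exactly one row) and that the support of $N$ lies in $S_i\times S_j$ with $S_i\cap S_j=\emptyset$. There is no real obstacle. The argument fails for $\Ef^-$, whose blocks of form (iii) have several rows, which is why the lemma is stated only for $\Ef^+$.
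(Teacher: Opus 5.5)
Your proof is correct and follows the paper's argument: reduce to elementary generators, use $F^{-1}=F^*$ and the one-row block structure of $F$ to get $F^{-1}E_{ij}(x)F=I_\tau+F^{-1}_{\bullet i}\,x\,F_{j\bullet}$. Your observation that this matrix is supported in $S_i\times S_j$ with $S_i\cap S_j=\emptyset$, and hence is a product of commuting elementary matrices, spells out the final step that the paper only asserts.
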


\begin{proof}
Write $F=\operatorname{diag}(F_1,\dots,F_n)$ where $n=|\sigma|$ and each block $F_i$ is of the form (i) or (ii) in Definition~\ref{def:F}. For any $1\leq i\leq n$ let $m_i$ be the number of columns of $F_i$ (hence $m_1+\dots+m_n=|\tau|)$. 

Let $E_{kl}(x)=I_\sigma + e_{kl}(x)$, where $1\leq k,l\leq n$, be an elementary matrix in $\Ee_\sigma(L)$. Then 
\[F^{-1}E_{kl}(x)F=I_\tau + F^{-1}_{\bullet k}xF_{l\bullet}\]
where $F_{l\bullet}$ denotes the $l$-th row of $F$ and $F^{-1}_{\bullet k}$ denotes the $k$-th column of $F^{-1}$. Clearly 
\[F_{l\bullet}=(0_{m_1}, \dots, 0_{m_{l-1}}, F_l,0_{m_{l+1}}, \dots, 0_{m_{n}})\]
where $0_m$ denote the $1\times m$ vector whose components are all zero. Moreover, 
\[F^{-1}_{\bullet k}=(0_{m_1}, \dots, 0_{m_{k-1}}, F_k,0_{m_{k+1}}, \dots, 0_{m_{n}})^*\]
since $F^{-1}=F^*$. It follows that $I_\tau + F^{-1}_{\bullet k}xF_{l\bullet}$ can be written as product of elementary matrices in $\Ee_\tau(L)$.
\end{proof}

\begin{proposition}\label{prop:Etildechar}
Let $\sigma\in \Es$. Then $\widehat \EE_\sigma(L)$ is generated by the set
\[X_\sigma=\{A\in \GL_\sigma(L)\mid \exists \tau\in \Es, F\in \F^+_{\sigma\times\tau}(L):~\theta_F(A)\in\Ee_\tau(L)\}.\]
\end{proposition}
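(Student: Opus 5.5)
The plan is to prove the two inclusions between $\widehat\EE_\sigma(L)$ and the subgroup $\langle X_\sigma\rangle$ of $\GL_\sigma(L)$ generated by $X_\sigma$. First I record two preliminary observations.

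\emph{First observation.} Every generator of $\EE(L)$ is an endomorphism, i.e.\ an elementary matrix in some $\Ee_\kappa(L)$. Hence $\EE_\kappa(L)=\Ee_\kappa(L)$.

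\emph{Second observation.} If $A\in\F\P(L)$ has target $\sigma$ and source $\kappa$ and $E\in\EE(L)$ is composable with it, then $AEA^{-1}$ is an endomorphism of $\sigma$ and $E\in\Ee_\kappa(L)$. Every generator of $\widehat\EE(L)$ is an endomorphism of some object, so a composable word in these generators and their inverses that lands in $\GL_\sigma(L)$ uses only generators at $\sigma$. Therefore $\widehat\EE_\sigma(L)$ is the subgroup of $\GL_\sigma(L)$ generated by the matrices $AEA^{-1}$ with $A\in\F\P_{\sigma\times\kappa}(L)$ and $E\in\Ee_\kappa(L)$, for $\kappa\in\Es$. (The inverses of these generators are of the same form.)

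\emph{The inclusion $X_\sigma\subseteq\widehat\EE_\sigma(L)$.} Let $A\in X_\sigma$, and choose $F\in\F^+_{\sigma\times\tau}(L)$ with $E:=F^{-1}AF\in\Ee_\tau(L)$. Then $A=FEF^{-1}$. Since $\F^+(L)\subseteq\F(L)\subseteq\F\P(L)$, the matrix $A$ is one of the generators above.

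\emph{The reverse inclusion.} Take a generator $AEA^{-1}$ with $A\in\F\P_{\sigma\times\kappa}(L)$ and $E\in\Ee_\kappa(L)$. By Proposition~\ref{prop:FPnormal}, write $A=F^+PF^-$ with $F^+\in\F^+(L)$, $P\in\P(L)$ and $F^-\in\F^-(L)$. Then
\[AEA^{-1}=F^+\bigl(P\,(F^-E(F^-)^{-1})\,P^{-1}\bigr)(F^+)^{-1},\]
and I show that the middle factor is an elementary product, in three steps.

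\begin{enumerate}[(a)]
\item Since $F^-$ is a composition of matrices from the sets $\Ef^-$, the remark following Definition~\ref{def:F+F-} shows that $G:=(F^-)^{-1}$ is a composition $G_1\cdots G_k$ of matrices $G_i\in\Ef^+$. Applying Lemma~\ref{lem:E-plus-conjugation} to $G_1,\dots,G_k$ in turn gives
\[F^-E(F^-)^{-1}=G^{-1}EG=G_k^{-1}\cdots G_1^{-1}\,E\,G_1\cdots G_k\in\Ee_\rho(L)\]
for the appropriate object $\rho$.
\item Next, conjugation by a generalised permutation matrix $Q\in\Pe_{\omega\times\rho}(K)$ maps $\Ee_\rho(L)$ into $\Ee_\omega(L)$. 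Write $Q_{ij}=\delta_{j,\pi(i)}k_i\omega_i$. A direct computation gives
\[QE_{ab}(x)Q^{-1}=E_{\pi^{-1}(a)\pi^{-1}(b)}\bigl(k_{\pi^{-1}(a)}\,x\,k_{\pi^{-1}(b)}^{-1}\bigr),\]
and the new entry lies in $\omega_{\pi^{-1}(a)}L\,\omega_{\pi^{-1}(b)}$ because $\omega_i=\rho_{\pi(i)}$.
\item Iterating (b) over the factors of $P$ gives $E':=PG^{-1}EGP^{-1}\in\Ee_\omega(L)$, where $\omega$ is the source of $F^+$.
\end{enumerate}

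It follows that $F^+\in\F^+_{\sigma\times\omega}(L)$ and $\theta_{F^+}(AEA^{-1})=E'\in\Ee_\omega(L)$, so $AEA^{-1}\in X_\sigma$. Hence $\widehat\EE_\sigma(L)\subseteq\langle X_\sigma\rangle$.

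The main obstacle is bookkeeping rather than a genuine difficulty. It consists of the reduction in the second observation, which identifies $\widehat\EE_\sigma(L)$ with the group generated by conjugates based at $\sigma$, together with checking that Lemma~\ref{lem:E-plus-conjugation}, stated for a single matrix of $\Ef^+$, extends to arbitrary composites in $\F^+(L)$ by induction on the number of factors.
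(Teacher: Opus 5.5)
Your proof is correct and takes the same route as the paper. The easy inclusion comes from $A=FEF^{-1}$, and the converse from the factorisation $F^+PF^-$ of Proposition~\ref{prop:FPnormal}, Lemma~\ref{lem:E-plus-conjugation} applied to $(F^-)^{-1}\in\F^+(L)$, and the normalisation of elementary groups by generalised permutation matrices. You also spell out two points the paper leaves implicit: that $\widehat\EE_\sigma(L)$ is generated by the conjugates based at $\sigma$, and the explicit conjugation formula for $\Pe(K)$.
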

\begin{proof}
First we show that $X_\sigma\subseteq \widehat\EE_\sigma(L)$. Let $A\in\GL_\sigma(L)$ such that $\theta_F(A)\in\Ee_\tau(L)$ for some $\tau\in \Es$ and $F\in \F^+_{\sigma\times\tau}(L)$. Then $F^{-1}AF=E$ for some $E\in \Ee_\tau(L)$. It follows that $A=FEF^{-1}$ and thus $A\in \widehat \EE_\sigma(L)$.

Next we show that $\widehat \EE_\sigma(L)$ is generated by $X_\sigma$. Let $A=BEB^{-1}$ be a generator of $\widehat\EE_\sigma(L)$, where $B\in\F\P_{\sigma\times\tau}(L)$ and $E\in \Ee_\tau(L)$ for some $\tau\in \Es$. We will show that $A\in X_\sigma$. By Proposition \ref{prop:FPnormal} we can write $B=F^+PF^-$ where $F^+\in \F^+(L)$, $P\in \P(L)$ and $F^-\in \F^-(L)$. Clearly
\[E=B^{-1}AB=(F^-)^{-1}P^{-1}(F^+)^{-1}AF^+PF^-.\]
It follows from Lemma \ref{lem:E-plus-conjugation} that 
\[F^{-}E(F^-)^{-1}=P^{-1}(F^+)^{-1}AF^+P\in \EE(L)\]
(note that $(F^-)^{-1}=(F^-)^*\in \F^+(L)$). Since $\EE(L)$ is normalised by $\P(L)$, it follows that 
\[(F^+)^{-1}AF^+=\theta_{F^+}(A)\in \EE(L)\]
and thus $A\in X_\sigma$.
\end{proof}

\section{Applications to Leavitt path algebras of hypergraphs}
Throughout this section $K$ denotes a fixed field.
\subsection{Leavitt path algebras of hypergraphs}
\begin{definition}
Let $I$ and $X$ be sets. A (not necessarily injective) function $x:I\rightarrow X$, $i\mapsto x_i=x(i)$ is called a {\it family of elements in $X$ (indexed by $I$)}. We will usually denote such a family by $(x_i)_{i\in I}$. We call the family $(x_i)_{i\in I}$ {\it nonempty} if $I$ is not the empty set.
\end{definition}

\begin{definition}
A {\it (directed) hypergraph} is a quadruple $H=(H^0,H^1,r,s)$ where $H^0$ and $H^1$ are sets and $r$ and $s$ are maps associating to each $h\in H^1$ a nonempty family $r(h)=(r(h)_j)_{j\in J_{h}}$ resp. $s(h)=(s(h)_i)_{i\in I_{h}}$ of elements in $H^0$. The elements of $H^0$ are called {\it vertices} and the elements of $H^1$ {\it hyperedges}. $H$ is called {\it finite} if $H^0$ and $H^1$ are finite sets. %In this article all hypergraphs are assumed to be nonempty.
\end{definition}

\begin{definition}\label{defhlpa}
Let $H$ be a finite hypergraph. The $K$-algebra $L(H)$ presented by the generating set 
\[\{v,h_{ij},h_{ij}^*\mid v\in H^0, h\in H^1, i\in I_h,  j\in J_h\}\]
and the relations
\begin{enumerate}[(i)]
\item $uv=\delta_{uv}u\quad(u,v\in H^0)$,
\medskip
\item $s(h)_ih_{ij}=h_{ij}=h_{ij}r(h)_j,~r(h)_jh_{ij}^*=h_{ij}^*=h_{ij}^*s(h)_i\quad(h\in H^1, i\in I_h,  j\in J_h)$,
\medskip
\item $\sum\limits_{j\in J_h}h_{ij}h_{i'j}^*= \delta_{ii'}s(h)_i\quad(h\in H^1, ~i,i'\in I_h)$ and
\medskip
\item $\sum\limits_{i\in I_h}h_{ij}^*h_{ij'}= \delta_{jj'}r(h)_j\quad(h\in H^1,~j,j'\in J_h)$
\end{enumerate}
is called the {\it Leavitt path algebra} of $H$. 
\end{definition}

Note that $L(H)$ is unital with $\sum_{v\in H^0}v=1$.

\subsection{Matrix rings over hyper Leavitt path algebras as free products}

Until the end of Section 5, $H$ denotes a finite hypergraph. We write 
\[H^0=\{v_1,\dots,v_q\}\quad\text{ and }\quad H^1=\{h^{(1)},\dots,h^{(p)}\}.\]
Moreover, we assume that
\[I_{h^{(i)}}=\{1,\dots,m_i\}\quad\text{and}\quad J_{h^{(i)}}=\{1,\dots,n_i\}\]
and set
\[v_{i,j}:=s(h^{(i)})_j\quad\text{and}\quad w_{i,j'}:=r(h^{(i)})_{j'}\]
for every $1\leq i\leq p$, $1\leq j\leq m_i$ and $1\leq j'\leq n_i$.

We define the $K$-algebra $S:=K^{H^0}$ and denote for any $v\in H^0$ the element of $S$ whose $v$-th component is $1$ and whose other components are $0$ by $\alpha_v$. For any $1\leq i\leq p$ we define the finitely generated projective $S$-modules 
\[P_i:=\bigoplus_{j=1}^{m_i}\alpha_{v_{i,j}} S\quad\text{ and }\quad Q_i:=\bigoplus_{j=1}^{n_i}\alpha_{w_{i,j}} S.\]
%\[P_i:=\bigoplus_{(u,k)\in I_{h_i}}\alpha_{u}S\quad\text{ and }\quad Q_i:=\bigoplus_{(v,k)\in J_{h_i}}\alpha_{v}S.\]
Then 
\[L(H)\cong S\big\langle i_1,i_1^{-1}:\overline{P_{1}}\cong \overline{Q_{1}},\dots , i_p,i_p^{-1}:\overline{P_{p}}\cong \overline{Q_{p}}\big\rangle,\]
see \cite[\S 9.3]{Raimund2}.

Let $n=q+\sum_{i=1}^p m_i+n_i$. %Note that $n$ is large enough so that
%\[(\bigoplus_{v\in E_{\reg}} P_{v}\oplus Q_{v})\oplus (\bigoplus_{v\in E_{\sink}} vR)\]
%can be embedded as a proper direct summand in the free right $R$-module $R^n$ of rank $n$.
It follows from \cite[Theorem 4.1]{bergman74b} that
 \[\Mat_n(L(H))\cong R\big\langle i_1,i_1^{-1}:\overline{r_n(P_{1})}\cong \overline{r_n(Q_{1})},\dots , i_p,i_p^{-1}:\overline{r_n(P_{p})}\cong \overline{r_n(Q_{p})}\big\rangle,\]
where $R=\Mat_n(S)$ and for any $S$-module $M$, $r_n(M)$ denotes the $R$-module consisting of all row vectors of length $n$ in the elements of $M$, on which the elements of $R=\Mat_n(S)$ act via the usual rules for multiplying a vector by a matrix.
%, applied to the $S$-module structure of $M$.

We define the idempotent matrices
\begin{equation}e_{P_i}=\sum_{j=1}^{m_i}\epsilon^{v_{i,j}}_{z_i+j}~~(1\leq i\leq p),\quad e_{Q_i}=\sum_{j=1}^{n_i}\epsilon^{w_{i,j}}_{z_i+m_i+j}~~(1\leq i\leq p)\label{H:idemp1}
\end{equation}
and
\begin{equation}
e_i=\epsilon_{z+i}^{v_i}~(1\leq i\leq q),\quad e_0=1-\sum_{i=1}^{p}(e_{P_i}+e_{Q_i})-\sum_{i=1}^q e_i \label{H:idemp2}
\end{equation}
where $z_i=\sum_{\ell=1}^{i-1}m_\ell+n_\ell$ for $1\leq i\leq p$, $z=\sum_{\ell=1}^{p}m_\ell+n_\ell$, and $\epsilon_{k}^v$ denotes the idempotent matrix in $R$ whose entry at position $(k,k)$ is $\alpha_v$ and whose other entries are $0$. Clearly the matrices
\[e_{P_1},e_{Q_1},e_{P_2},e_{Q_2},\dots, e_{P_p},e_{Q_p},e_{1},\dots,e_q, e_0\]
form a complete set of orthogonal idempotents in $R$.

We set $R_0:=K^{2p+q+1}$ and view $R$ as an $R_0$-ring$_K$ (see \cite[\S2]{bergman74b}) via the $K$-algebra homomorphism $R_0\to R$ mapping 
\begin{align*}
(x_1,\dots,x_{2p+q+1})\mapsto&~e_{P_1}x_1+e_{Q_1}x_2+\dots+e_{P_p}x_{2p-1}+e_{Q_p}x_{2p}\\
&+e_{1}x_{2p+1}+\dots+e_{q}x_{2p+q}+e_0x_{2p+q+1}.
\end{align*}
Note that this homomorphism might not be an isomorphism since $e_0$ can be $0$. We will still denote $(1,0,\dots,0)\in R_0$ by $e_{P_1}$, $(0,1,\dots,0)\in R_0$ by $e_{Q_1}$ and so on. 

We leave it to the reader to check that $r_n(P_i)\cong e_{P_i}R$ and $r_n(Q_i)\cong e_{Q_i}R$ for any $1\leq i\leq p$. It follows that the $R$-modules $r_n(P_i),r_n(Q_i)~(1\leq i\leq p)$ are induced by $R_0$-modules:
\begin{align*}
r_n(P_i)&\cong e_{P_i}R\cong P_{0,i} \otimes_{R_0} R, \quad\text{where }P_{0,i}=e_{P_i}R_0,\\[0.2cm]
r_n(Q_i)&\cong e_{Q_i}R\cong Q_{0,i} \otimes_{R_0} R, \quad\text{where }Q_{0,i}=e_{Q_i}R_0.
\end{align*}
It follows from \cite[Theorem 3.4]{bergman74b} that
\begin{align*}
&R\big\langle i_1,i_1^{-1}:\overline{r_n(P_{1})}\cong \overline{r_n(Q_{1})},\dots , i_p,i_p^{-1}:\overline{r_n(P_{p})}\cong \overline{r_n(Q_{p})}\big\rangle\\[0.2cm]
\cong&R\amalg_{R_0}R_0\big\langle i_1,i_1^{-1}:\overline{P_{0,1}}\cong \overline{Q_{0,1}},\dots , i_p,i_p^{-1}:\overline{P_{0,p}}\cong \overline{Q_{0,p}}\big\rangle.
\end{align*}
The factor $R_0\big\langle i_1,i_1^{-1}:\overline{P_{0.1}}\cong \overline{Q_{0,1}},\dots , i_p,i_p^{-1}:\overline{P_{0,p}}\cong \overline{Q_{0,p}}\big\rangle$ is easily seen to be isomorphic to 
\[\underbrace{\Mat_2(K)\times \dots\times\Mat_2(K)}_{p\text{ factors}}\times \underbrace{K\times \dots\times K}_{q+1\text{ factors}}\]
made an $R_0$-ring via the homomorphism mapping 
\begin{align*}
(x_1,\dots,x_{2p+q+1})\mapsto (e_{11}x_1+e_{22}x_2,\dots,e_{11}x_{2p-1}+e_{22}x_{2p},x_{2p+1},\dots,x_{2p+q},x_{2p+q+1}),
\end{align*}
cf. \cite[Proof of Theorem 5.2]{bergman74b}. We have shown:

\begin{theorem}\label{thm:hlpa1}
Let $H$ be a finite hypergraph with $|H^0|=q$ and $|H^1|=p$. If $n=|H^0|+\sum_{h\in H^1}|I_h|+|J_h|$, then 
\[\Mat_n(L(H))\cong \Mat_n(K^{H^0})\amalg_{K^{2p+q+1}}(\Mat_2(K)^p\times K^{q+1}).\]
Here $\Mat_n(K^{H^0})$ and $\Mat_2(K)^p\times K^{q+1}$ are viewed as $K^{2p+q+1}$-rings via the maps
\begin{align*}
(x_1,\dots,x_{2p+q+1})\mapsto&~e_{P_1}x_1+e_{Q_1}x_2+\dots+e_{P_p}x_{2p-1}+e_{Q_p}x_{2p}\\
&+e_{1}x_{2p+1}+\dots+e_{q}x_{2p+q}+e_0x_{2p+q+1}.
\end{align*}
and 
\begin{align*}
(x_1,\dots,x_{2p+q+1})\mapsto (e_{11}x_1+e_{22}x_2,\dots,e_{11}x_{2p-1}+e_{22}x_{2p},x_{2p+1},\dots,x_{2p+q},x_{2p+q+1}),
\end{align*}
respectively, where the idempotents $e_{P_i}, e_{Q_i}, e_i$ are defined as in (\ref{H:idemp1}) and (\ref{H:idemp2}).
\end{theorem}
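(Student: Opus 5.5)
The plan is to mirror the graph case (Theorem \ref{thm:lpa1}) step by step. The proof is essentially the chain of isomorphisms assembled in \S5.2. First I will recall the presentation of $L(H)$ as a universal localisation-type construction,
\[L(H)\cong S\big\langle i_1,i_1^{-1}:\overline{P_{1}}\cong \overline{Q_{1}},\dots , i_p,i_p^{-1}:\overline{P_{p}}\cong \overline{Q_{p}}\big\rangle,\]
with $S=K^{H^0}$, citing \cite[\S 9.3]{Raimund2}. Next I will pass to $n\times n$ matrices via \cite[Theorem 4.1]{bergman74b}. This gives $\Mat_n(L(H))$ as the ring obtained from $R=\Mat_n(S)$ by adjoining universal isomorphisms $r_n(P_i)\cong r_n(Q_i)$.

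The key step is to check that $n=q+\sum_i(m_i+n_i)$ is large enough to place all the modules in distinct diagonal slots. The first $z=\sum_i(m_i+n_i)$ diagonal positions are split into consecutive blocks of sizes $m_i$ and $n_i$; these carry $\alpha_{v_{i,j}}$, respectively $\alpha_{w_{i,j}}$. The next $q$ positions carry $\alpha_{v_1},\dots,\alpha_{v_q}$. The idempotents in (\ref{H:idemp1}) and (\ref{H:idemp2}) are therefore pairwise orthogonal, and $e_0$ is idempotent.

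I will then verify $r_n(P_i)\cong e_{P_i}R$. A row vector over $P_i=\bigoplus_j\alpha_{v_{i,j}}S$ corresponds to the matrix whose row $z_i+j$ is its $j$-th coordinate row, and this is exactly an element of $e_{P_i}R$. The same argument gives $r_n(Q_i)\cong e_{Q_i}R$. Both modules are thus induced from $R_0=K^{2p+q+1}$ along the stated homomorphism, and \cite[Theorem 3.4]{bergman74b} splits off the adjoined isomorphisms as a coproduct over $R_0$.

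It remains to identify $R_0\langle i_k,i_k^{-1}:\overline{e_{P_k}R_0}\cong\overline{e_{Q_k}R_0}\rangle$. The coordinates belonging to different $k$, and the $q+1$ remaining coordinates, do not interact, so this ring is a product. For each $k$, the factor $K\times K$ with a universal isomorphism between its two simple summands is $\Mat_2(K)$, with $e_{P_k}\mapsto e_{11}$ and $e_{Q_k}\mapsto e_{22}$, as in \cite[Proof of Theorem 5.2]{bergman74b}. The untouched coordinates give $K^{q+1}$.

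I expect the main obstacle to be the hypotheses of Bergman's theorems rather than any computation. The $R_0$-ring structure must be a $K$-algebra map, and it need not be faithful when $e_0=0$; but Theorem 3.4 only requires the modules to be induced, so this is harmless. The one remaining check is the identification $r_n(P_i)\cong e_{P_i}R$ when the multiset of vertices $v_{i,j}$ has repetitions. I will argue coordinatewise, so that repetitions cause no problem.
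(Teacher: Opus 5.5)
Your proposal is correct and follows essentially the same route as the paper. The paper's chain is: the presentation of $L(H)$ from \cite[\S 9.3]{Raimund2}; passage to $\Mat_n$ via \cite[Theorem 4.1]{bergman74b}; the identifications $r_n(P_i)\cong e_{P_i}R$ and $r_n(Q_i)\cong e_{Q_i}R$ as modules induced from $R_0=K^{2p+q+1}$; \cite[Theorem 3.4]{bergman74b} to split off a coproduct over $R_0$; and the identification of the remaining factor with $\Mat_2(K)^p\times K^{q+1}$. Your extra checks (orthogonality of the idempotents, coordinatewise handling of repeated vertices, and the harmlessness of non-faithfulness when $e_0=0$) are exactly the details the paper leaves to the reader.
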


\begin{remark}\label{rem:hlpa1a}
If \(e_0=0\), then one can remove the last factor \(K\)
from \(K^{2p+q+1}\) and also from
\(\Mat_2(K)^p\times K^{q+1}\).
\end{remark}

\begin{remark}\label{rem:hlpa1b}
Analysing the proof of Theorem \ref{thm:hlpa1} we see that there is an isomorphism 
\[\phi:\Mat_n(K^{H^0})\amalg_{K^{2p+q+1}}(\Mat_2(K)^p\times K^{q+1})\to \Mat_n(L(H))\]
such that
\begin{align*}
\phi(e_{ij}\alpha_v)&= e_{ij}v &&\text{ for each } e_{ij}\alpha_v \text{ in }\Mat_n(K^{H^0}),\\
\phi(e^{(i)}_{11})&=\sum_{j=1}^{m_i}e_{z_i+j,z_i+j}v_{i,j}&&\text{ for } 1\leq i\leq p,\\
\phi(e^{(i)}_{22})&=\sum_{j=1}^{n_i}e_{z_i+m_i+j,z_i+m_i+j}w_{i,j} &&\text{ for } 1\leq i\leq p,\\
\phi(e^{(i)}_{12})&=\sum_{j=1}^{m_i}\sum_{j'=1}^{n_i}e_{z_i+j, z_i+m_i+j'} h^{(i)}_{jj'}&&\text{ for } 1\leq i\leq p,\\
\phi(e^{(i)}_{21})&=\sum_{j=1}^{m_i}\sum_{j'=1}^{n_i}e_{z_i+m_i+j', z_i+j} (h^{(i)}_{jj'})^* &&\text{ for } 1\leq i\leq p,\\
\phi(1^{(i)})&=e_{ z+i,z+i}v_i &&\text{ for } 1\leq i\leq q,\\
\phi(1^{(0)})&=I_n-\sum_{i=1}^p(\phi(e^{(i)}_{11})+\phi(e^{(i)}_{22}))-\sum_{i=1}^q\phi(1^{(i)}),&&
\end{align*}
where for $1\leq i\leq p$ and $k,l\in\{1,2\}$
\[e^{(i)}_{kl}=(0,\dots,0,e_{kl},0,\dots,0)\in \Mat_2(K)^p\times K^{q+1}\]
with $e_{kl}$ in the $i$-th position, for $1\leq i\leq q$, 
\[1^{(i)}=(0,\dots,0,1,0,\dots,0)\in \Mat_2(K)^p\times K^{q+1}\]
with $1$ in the $(p+i)$-th position, and
\[1^{(0)}=(0,\dots,0,1)\in \Mat_2(K)^p\times K^{q+1}.\]
\end{remark}

\begin{example}\label{ex:hlpa1}
Let \(H\) be the hypergraph with one vertex \(v\) and one hyperedge \(h\)
such that \(I_h=\{1,2\}\) and \(J_h=\{1,2,3\}\). Note that the Leavitt
path algebra \(L(H)\) is isomorphic to the Leavitt algebra \(L(2,3)\).
By Theorem~\ref{thm:hlpa1} and Remarks~\ref{rem:hlpa1a}
and~\ref{rem:hlpa1b}, there is an isomorphism
\[
\phi:\Mat_6(K)\amalg_{K^3}\bigl(\Mat_2(K)\times K\bigr)
\;\longrightarrow\;
\Mat_6(L(H))
\]
such that
\begin{align*}
\phi(e_{ij}^{(1)})&= e_{ij}~\qquad(1\le i,j\le 6)
,\\[2pt]
\phi((e^{(2)}_{11},0))&=e_{11}+e_{22},\\[2pt]
\phi((e^{(2)}_{22},0))&=e_{33}+e_{44}+e_{55},\\[2pt]
\phi((e^{(2)}_{12},0))&=
\begin{pmatrix}
0&0&h_{11}&h_{12}&h_{13}&0\\
0&0&h_{21}&h_{22}&h_{23}&0\\
0&0&0&0&0&0\\
0&0&0&0&0&0\\
0&0&0&0&0&0\\
0&0&0&0&0&0
\end{pmatrix},\\[4pt]
\phi((e^{(2)}_{21},0))&=
\begin{pmatrix}
0&0&0&0&0&0\\
0&0&0&0&0&0\\
h_{11}^*&h_{21}^*&0&0&0&0\\
h_{12}^*&h_{22}^*&0&0&0&0\\
h_{13}^*&h_{23}^*&0&0&0&0\\
0&0&0&0&0&0
\end{pmatrix},\\[4pt]
\phi((0,1))&=e_{6,6},
\end{align*}
where \(e_{ij}^{(1)}\) are the matrix units in \(\Mat_6(K)\),
\(e_{ij}^{(2)}\) are the matrix units in \(\Mat_2(K)\) and \(e_{ij}\) are the matrix units in
\(\Mat_6(L(H))\).
\end{example}

\subsection{The functor $\psi:\P(\Mat_n(L(H)))\to\P(L(H))$}

We keep the notation of \S 5.2. Set
\[
R:=\Mat_n(K^{H^0})\amalg_{K^{2p+q+1}}\bigl(\Mat_2(K)^p\times K^{q+1}\bigr)
\]
and
\(\widetilde{R}:=\Mat_n(L(H))\), and let \(\phi:R\to\widetilde{R}\) be the isomorphism
described in Remark~\ref{rem:lpa1b}. Moreover, let
\[
\mathcal{I}=\{e_{P_i},e_{Q_i},e_j,e_0\mid 1\leq i\leq p, ~1\leq j\leq q\}.
\]
Clearly \(\mathcal{J}=\phi(\mathcal{I})\) is a complete set of orthogonal idempotents
in \(\widetilde{R}\). We denote by \(\P(\widetilde{R})\) the category from Definition~\ref{def:catP}
with respect to \(\mathcal{J}\), and by \(\P(L(H))\) the category from
Definition~\ref{def:catP} with respect to \(H^0=\{v_1,\dots,v_q\}\).
The corresponding general linear groupoids are denoted by
\(\Ge(\widetilde{R})\) and \(\Ge(L(H))\).

\medskip
\noindent\textbf{Atomic decomposition of the idempotents in \(\mathcal{J}\).}
We call the idempotents
\[
\varepsilon(k,v)=e_{kk}v\in \widetilde{R}
\qquad(1\le k\le n,\ v\in H^0)
\]
\textit{atomic}. For \(u\in\mathcal{J}\), let
\(\operatorname{at}(u)\) be the ordered list of atomic idempotents whose
sum is \(u\), ordered first by row index \(k\) and, for equal \(k\), by the
fixed ordering \(v_1,\dots,v_q\) of the vertices. Denote the corresponding
ordered list of vertices by
\[
V(u)=\bigl(\operatorname{vert}(a)\bigr)_{a\in\operatorname{at}(u)},
\]
where \(\operatorname{vert}(\varepsilon(k,v))=v\).

\medskip
\noindent\textbf{Definition of \(\psi\).}
The functor $\psi:\P(\widetilde R)\to \P(L(H))$ is defined via the commutative diagram
\[
\xymatrix@C=1.2cm@R=1.5cm{
\P(\widetilde R)
  \ar[r]^{\xi_{\widetilde R}}
  \ar[d]_{\psi}
& \P'(\widetilde R)
  \ar[dr]^{\mathcal M}
&
\\
\P(L(H))
& \P'(L(H))
  \ar[l]_{\xi_{L(H)}^{-1}}
& \mathcal \P''(L(H))
  \ar[l]_{\omega},
}
\]
where $\xi_{\widetilde R}$, $\mathcal{M}$, $\omega$, $\xi_{L(H)}$ and $\P''(L(H))$ are defined as in \S 4.3. Note that \(\psi\) preserves addition of matrices.

The functor $\psi$ restricts to a functor $\Ge(\Mat_n(L(H)))\longrightarrow \Ge(L(H))$, which we denote by the same letter $\psi$.

\begin{lemma}\label{lem:hpsisurj}
The functor
\[
\psi:
\Ge(\Mat_n(L(H)))\longrightarrow \Ge(L(H))
\]
is surjective on objects and on
morphisms.
\end{lemma}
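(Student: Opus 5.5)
The plan is to follow the proof of Lemma~\ref{lem:psisurj} almost verbatim. The one change concerns the idempotents used to realise single vertices. In the hypergraph setting every vertex $v_j$ ($1\le j\le q$) comes with its own idempotent $e_j=\epsilon^{v_j}_{z+j}$. So we do not need to separate regular vertices from sinks: we set
\[
u(v_j):=\phi(e_j)=e_{z+j,z+j}\,v_j\qquad(1\le j\le q).
\]
This is a single atomic idempotent in $\mathcal J$ with $V(u(v_j))=(v_j)$.

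\emph{Surjectivity on objects.} Given $\alpha=(v_{j_1},\dots,v_{j_a})\in\Es_{H^0}$, take $\sigma=(u(v_{j_1}),\dots,u(v_{j_a}))\in\Es_{\mathcal J}$. By the concrete description of $\psi$ on objects (the analogue of Remark~\ref{rem:psi}), $\psi(\sigma)$ is the concatenation of the lists $V(u(v_{j_s}))$, which is exactly $\alpha$.

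\emph{Surjectivity on morphisms.} Let $\alpha=(v_{j_1},\dots,v_{j_a})$ and $\beta=(v_{j'_1},\dots,v_{j'_b})$, and let $X\in\GL_{\beta\times\alpha}(L(H))$ with inverse $Y$. Define $\sigma,\tau$ as above, and put $k_j=z+j$. Define $A\in\Mat_{\tau\times\sigma}(\widetilde R)$ and $B\in\Mat_{\sigma\times\tau}(\widetilde R)$ by
\[
A_{rs}=e_{k_{j'_r},k_{j_s}}X_{rs},\qquad B_{sr}=e_{k_{j_s},k_{j'_r}}Y_{sr}.
\]
The entries lie in the correct corners: $X_{rs}\in v_{j'_r}L(H)v_{j_s}$, so $A_{rs}\in u(v_{j'_r})\widetilde R\,u(v_{j_s})$, and similarly for $B$. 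For the products, $e_{k,l}e_{l',m}=\delta_{ll'}e_{k,m}$ and the vertex factors multiply inside $L(H)$. Hence
\[
(AB)_{rr'}=\sum_s e_{k_{j'_r},k_{j'_{r'}}}X_{rs}Y_{sr'}=e_{k_{j'_r},k_{j'_{r'}}}(XY)_{rr'}.
\]
Since $XY=I_\beta$, this equals $\delta_{rr'}e_{k_{j'_r},k_{j'_r}}v_{j'_r}=\delta_{rr'}\tau_r$, so $AB=I_\tau$. The same computation gives $BA=I_\sigma$. Finally, each entry of $A$ is supported on a single pair of atomic idempotents, with that unique nonzero entry equal to $X_{rs}$, so the description of $\psi$ on morphisms yields $\psi(A)=X$.

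The only point needing care is checking that the description of $\psi$ in Remark~\ref{rem:psi} carries over unchanged to $\widetilde R=\Mat_n(L(H))$. It does, because $\psi$ is built from the same Morita functor and the same atomic decomposition. Everything else is routine. I do not expect a real obstacle: the hypergraph case is in fact slightly simpler than the graph case, because every vertex, not just the sinks and regular vertices separately, has a dedicated one-entry idempotent $e_j$.
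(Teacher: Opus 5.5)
Your proposal is correct and is essentially the adaptation of the proof of Lemma~\ref{lem:psisurj} that the paper intends when it omits this proof as ``very similar''. Your one change is the right one: you realise each vertex $v_j$ by the atomic idempotent $\phi(e_j)=e_{z+j,z+j}v_j$, since $\phi(e_{P_i})$ is no longer atomic when $m_i>1$. Your verification of $AB=I_\tau$, $BA=I_\sigma$ and $\psi(A)=X$ then goes through as written.
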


\begin{proof}
The proof is very similar to the proof of Lemma \ref{lem:psisurj} and therefore is omitted.
\end{proof}

\subsection{A generating set for $\Ge(L(H))$}
Recall that $H$ denotes a finite hypergraph. Moreover, 
\[H^0=\{v_1,\dots,v_q\},~~ H^1=\{h^{(1)},\dots,h^{(p)}\},~~I_{h^{(i)}}=\{1,\dots,m_i\},~~J_{h^{(i)}}=\{1,\dots,n_i\}\]
and 
\[v_{i,j}:=s(h^{(i)})_j,\quad w_{i,j'}:=r(h^{(i)})_{j'}\]
for every $1\leq i\leq p$, $1\leq j\leq m_i$ and $1\leq j'\leq n_i$.
We continue to write $\Ge(L(H))$ for the general linear groupoid of
$L(H)$ with respect to $H^0=\{v_1,\dots,v_q\}$.

\begin{definition}\label{def:hF}
Let $\sigma,\tau\in \Es_{H^0}$. We denote by $\Ef_{\sigma\times\tau}(L(H))$ the set of
all block-diagonal matrices
$A=\operatorname{diag}(A_1,\dots,A_\ell)\in\GL_{\sigma\times\tau}(L(H))$
such that each block $A_j$ is either of the
form
\begin{enumerate}[(i)]
\item $(v)$ where $v\in H^0$, or
\item $\begin{pmatrix}h^{(i)}_{11}&\dots&h^{(i)}_{1n_i}\\
\vdots&\ddots&\vdots\\
h^{(i)}_{m_i1}&\dots&h^{(i)}_{m_in_i}\end{pmatrix}$ where
$1\leq i\leq p$, or
\item $\begin{pmatrix}h^{(i)}_{11}&\dots&h^{(i)}_{1n_i}\\
\vdots&\ddots&\vdots\\
h^{(i)}_{m_i1}&\dots&h^{(i)}_{m_in_i}\end{pmatrix}^*=\begin{pmatrix}(h^{(i)}_{11})^*&\dots&(h^{(i)}_{m_i1})^*\\
\vdots&\ddots&\vdots\\
(h^{(i)}_{1n_i})^*&\dots&(h^{(i)}_{m_in_i})^*\end{pmatrix}$
      where $1\leq i\leq p$.
\end{enumerate}
\end{definition}

\begin{theorem}\label{thm:hlpa2}
The general linear groupoid $\Ge(L(H))$ is generated by the sets
\[
\Ee_{\sigma}(L(H)),~\Ef_{\sigma\times\tau}(L(H)),~\Pe_{\sigma\times\tau}(K)
\quad (\sigma,\tau\in\Es_{H^0}).
\]
\end{theorem}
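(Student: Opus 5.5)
The plan is to mirror the proof of Theorem~\ref{thm:lpa2} step by step, with Theorem~\ref{thm:hlpa1}, Remark~\ref{rem:hlpa1b} and Lemma~\ref{lem:hpsisurj} replacing their graph counterparts. Write $R_0=K^{2p+q+1}$, $R_1=\Mat_n(K^{H^0})$ and $R_2=\Mat_2(K)^p\times K^{q+1}$. I treat the case $e_0\neq0$; the case $e_0=0$ goes the same way after deleting the last factors as in Remark~\ref{rem:hlpa1a}.

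\textbf{Setting up.} $R_1$ and $R_2$ are faithful $R_0$-rings, and $\V(R_0)\cong\N_0^{2p+q+1}$, $\V(R_1)\cong\N_0^q$, $\V(R_2)\cong\N_0^{p+q+1}$. Each monoid is generated by classes of the form $uR_\mu$ with $u$ in (the image of) $\I=\{e_{P_i},e_{Q_i},e_j,e_0\}$, or at least by summands of such modules that are cyclic in the appropriate sense. For $R_1$ I use $e_{kk}\alpha_v R_1\cong e_j R_1$ with $v=v_j$. Hence every finitely generated projective module over each $R_\mu$ is pseudo-free, and Condition~(P) holds by Remark~\ref{rem:condP}.

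Theorem~\ref{thm:main} then says $\Ge(R)$ is generated by the $\Ge(R,\mu)$, $\mu=0,1,2$. By Lemma~\ref{lem:hpsisurj}, $\psi\circ\phi:\Ge(R)\to\Ge(L(H))$ is surjective. So it suffices to show that $\psi\circ\phi$ maps $\GL_{\sigma\times\tau}(R_\mu)$ and $\T_\sigma(R,\mu)$ into the subgroupoid $\H$ generated by the three sets in the statement.

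\textbf{Part I: the sets $\GL_{\sigma\times\tau}(R_\mu)$.}
\begin{enumerate}[(i)]
\item For $\mu=0$: reduce to ordered $\sigma,\tau$ using permutation matrices. The $\V$-monoid argument gives $\sigma=\tau$, so $\bar A\in\GL_{\bar\sigma}(K)$, which lies in $\H$ by Lemma~\ref{lem:GE}.
\item For $\mu=1$: count atomic summands $e_{kk}\alpha_vR_1$ for each vertex. This shows $\bar\sigma$ and $\bar\tau$ differ by a permutation. Then $\bar A$ has entries in $K$, and $\bar AP\in\GL_{\bar\sigma}(K)$ for a suitable $P\in\Pe_{\bar\tau\times\bar\sigma}(K)$; again Lemma~\ref{lem:GE} applies.
\item For $\mu=2$: assume $\sigma,\tau$ ordered. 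Then $A$ is block diagonal with blocks over $\Mat_2(K)$ (joining the $e_{P_i},e_{Q_i}$ coordinates) or over $K$. Gaussian elimination with elementary matrices over $R_2$ gives $EA$ diagonal, with entries $k e^{(i)}_{lm}$ or $k1^{(i)}$ where $k\in K^\times$.
\end{enumerate}

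The key computation in (iii) is $\psi\phi(e^{(i)}_{12})$. Its atomic decomposition has rows indexed by $\operatorname{at}(e_{P_i})$, which are the sources $v_{i,1},\dots,v_{i,m_i}$, and columns indexed by $\operatorname{at}(e_{Q_i})$, which are the ranges $w_{i,1},\dots,w_{i,n_i}$. Its entries are $h^{(i)}_{jj'}$, so it is exactly the block of type (ii) in Definition~\ref{def:hF}. Likewise $e^{(i)}_{21}$ maps to the type (iii) block. The idempotents $e^{(i)}_{11}$ and $e^{(i)}_{22}$ map to diagonal blocks of vertices, i.e.\ type (i) blocks. Finally $1^{(i)}$ and $1^{(0)}$ map to diagonals of vertices; for $1^{(0)}$ this is the atomic decomposition of $e_0$. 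Hence $\overline{EA}\in\De_{\bar\sigma}(K)\Ef_{\bar\sigma\times\bar\tau}(L(H))$, and $\bar E$ is a product of elementary matrices because $\psi$ sends elementary matrices to products of elementary matrices.

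\textbf{Part II: the transvection groups.} For $\mu=0,1$, the additivity of $\psi\circ\phi$ gives $\bar T=I+\bar A\bar B\bar C$ with $\bar A,\bar C$ over $K$ and $\bar C\bar A=0$. Lemma~\ref{lem:transv} then puts $\bar T$ in $\H$. For $\mu=2$, conjugate by a block elementary matrix $E\in\Ee_\sigma(R_2)$ that brings each block of $A$ into row echelon form, exactly as in the proof of Lemma~\ref{lem:transv}. This yields $ETE^{-1}\in\Ee_\sigma(R)$, and its image lies in $\H$.

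\textbf{Main obstacle.} I expect the hardest step to be (iii), and more precisely checking that the bookkeeping of the functor $\psi$ is compatible with hyperedges. The issue is that $e_{P_i}$ is now a sum of $m_i$ atomic idempotents rather than one. So I must verify:
\begin{itemize}
\item that the ordering in $\operatorname{at}(\cdot)$ matches the index order of $h^{(i)}_{jj'}$;
\item that diagonal entries $ke^{(i)}_{11}$ become $k\cdot\diag(v_{i,1},\dots,v_{i,m_i})$, which is a product of a $\De(K)$ matrix and type (i) blocks;
\item that the image of $E$ is genuinely a product of elementary matrices. An elementary entry in $e^{(i)}_{12}$-position maps to $I+(\text{block})$ supported off the diagonal, which factors into elementary matrices $E_{kl}(k\,h^{(i)}_{jj'})$.
\end{itemize}
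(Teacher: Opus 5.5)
Your proposal is correct and follows essentially the same route as the paper: the paper proves the set-up (Condition (P), Theorem~\ref{thm:main}, surjectivity of $\psi\circ\phi$) and then defers the rest to the proof of Theorem~\ref{thm:lpa2}, which is exactly the Part~I/Part~II argument you reproduce. You also spell out the hypergraph-specific bookkeeping that the paper leaves implicit, namely that $\psi\phi(e^{(i)}_{12})$ is precisely the type (ii) block with the correct index order, and that images of elementary matrices factor into elementary matrices.
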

\begin{proof}
By Theorem~\ref{thm:hlpa1} there is an isomorphism
\[
\phi : R\to \Mat_n(L(H)),
\qquad
R = \Mat_n(K^{H^0})\amalg_{K^{2p+q+1}}\bigl(\Mat_2(K)^p\times K^{q+1}\bigr).
\]
Write $R_0=K^{2p+q+1}$, $R_1=\Mat_n(K^{H^0})$ and $R_2=\Mat_2(K)^p\times K^{q+1}$. We will consider only the case that $e_0\neq 0$. The case $e_0=0$ can be handled similarly, but one should delete the last factor \(K\) from \(R_0\) and the last
factor \(K\) from \(R_2\) (see Remark \ref{rem:hlpa1a}). Both $R_1$ and $R_2$ are faithful $R_0$-rings, and $R$ is their coproduct in the category of $R_0$-rings.  

Considering the $V$-monoids of $R_0$, $R_1$ and $R_2$ we see that every finitely generated projective module over one of these rings is pseudo‑free. Hence Condition (P) is satisfied by Remark \ref{rem:condP}. Theorem~\ref{thm:main} therefore tells us that the general linear
groupoid $\Ge(R)$ with respect to
\[\mathcal{I}=\{e_{P_i},e_{Q_i},e_j,e_0\mid 1\leq i\leq p,\ 1\leq j\leq q\}\]
is generated by the subgroupoids $\Ge(R,\mu)\;(\mu=0,1,2)$.
Recall that for $\mu\in\{0,1,2\}$, $\Ge(R,\mu)$ denotes the subgroupoid of $\Ge(R)$ generated by
the sets $\GL_{\sigma\times\tau}(R_\mu)$ and 
$\T_\sigma(R,\mu)$ where $\sigma,\tau\in \Es_\I$.

Clearly the isomorphism $\phi:R\to\Mat_n(L(H))$ in Remark \ref{rem:hlpa1b} induces an isomorphism
\[
\phi:\Ge(R)\to\Ge(\Mat_n(L(H))),
\]
where $\Ge(\Mat_n(L(H)))$ denotes the general linear groupoid of
$\Mat_n(L(H))$ with respect to $\mathcal{J}=\phi(\mathcal{I})$.
Let
\[
\psi:\Ge(\Mat_n(L(H)))\to\Ge(L(H))
\]
be the functor defined in \S 5.3. Since $\psi$ is surjective by
Lemma~\ref{lem:hpsisurj}, the composition
\[
\psi\circ\phi : \Ge(R)\to\Ge(L(H))
\]
is surjective. It therefore suffices to show that all generators of
$\Ge(R)$ are mapped by $\psi\circ\phi$ into the subgroupoid
$\H\subseteq\Ge(L(H))$ generated by the sets
$\Ee_{\sigma}(L(H))$, $\Ef_{\sigma\times\tau}(L(H))$ and $\Pe_{\sigma\times\tau}(K)
$ where $\sigma,\tau\in\Es_{H^0}$. The rest of the proof is almost identical to the corresponding part of the proof of Theorem \ref{thm:lpa2}, and therefore is omitted.
\end{proof}

\begin{remark}\label{rem:hone-vertex}
It is easy to see that if the hypergraph \(H\) has exactly one
vertex, then every generalised permutation matrix in
\(\Ge(L(H))\) can be written as a product of elementary matrices and
one invertible diagonal matrix with entries in \(K^\times\).
Consequently, for one-vertex hypergraphs,
Theorem~\ref{thm:hlpa2} remains true if one replaces
\(\Pe_{\sigma\times\tau}(K)\) by \(\De_\sigma(K)\).
\end{remark}

\begin{remark}\label{rem:iso-vertex}
Suppose that $H$ is the hypergraph with one vertex $v$ and one hyperedge $h$ such that $|I_h|=m$ and $|J_h|=n$ where $m,n\geq 2$. Then $L(H)$ is isomorphic to the Leavitt algebra $L(m,n)$. Let $\sigma=(v,\dots,v)\in (H^0)^\ell$ for some $\ell< \min(m,n)$. The $\V$-monoid of $L(H)$ has the presentation $\langle v\mid mv=nv\rangle$ (see \cite{Raimund2}). Hence all sets $\GL_{\sigma\times\tau}(L(H))$ and $\GL_{\tau\times\sigma}(L(H))$, where $\tau\neq\sigma$, are empty. It follows from Theorem~\ref{thm:hlpa2} and Remark~\ref{rem:hone-vertex}
that $\GL_{\sigma}(L(H))\cong \GL_{\ell}(L(H))$ is generated by $\De_\sigma(K)$ and $\Ee_\sigma(L(H))$. In particular, $\GL_1(L(H))=K^\times$.
\end{remark}

In the following we denote $L(H)$ by $L$.

\begin{definition}\label{def:hfatALL}
We denote by 
\begin{itemize}
\item $\EE(L)$ the subgroupoid of
$\Ge(L)$ generated by the sets $\Ee_{\sigma}(L)$ where $\sigma\in\Es$,
\item $\F\P(L)$ the subgroupoid of $\Ge(L)$ generated by all sets $\Pe_{\sigma\times\tau}(K)$ and $\Ef_{\sigma\times\tau}(L)$ where $\sigma,\tau\in\Es$,
\item $\widehat{\EE}(L)$ the subgroupoid of $\Ge(L)$ generated by all matrices of the form $AEA^{-1}$ where $A\in \F\P(L)$ and $E\in \EE(L)$ are composable. 
\end{itemize}
For $\mathbf{C}\in\{\F\P,\widehat\EE\}$ and $\sigma,\tau\in \Es$ we set $\mathbf{C}_{\sigma\times\tau}(L):=\mathbf{C}(L)\cap \GL_{\sigma\times \tau}(L)$ and $\mathbf{C}_{\sigma}(L):=\mathbf{C}_{\sigma\times\sigma}(L)$.
\end{definition}

\begin{corollary}\label{cor:hlpa2}
Let $\sigma\in \Es$. Then every matrix $A\in \GL_\sigma(L)$ can be written as $A=BC$ where $B\in \widehat{\EE}_\sigma(L)$ and $C\in\F \P_{\sigma}(L)$.
\end{corollary}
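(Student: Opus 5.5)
The plan is to mirror the proof of Corollary~\ref{cor:lpa2} almost word for word, with Theorem~\ref{thm:hlpa2} in place of Theorem~\ref{thm:lpa2}.

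First, take $A\in\GL_\sigma(L)$. By Theorem~\ref{thm:hlpa2}, $A=A_1\cdots A_k$ is a composable product of matrices from the sets $\Ee_{\pi}(L)$, $\Ef_{\pi\times\rho}(L)$, $\Pe_{\pi\times\rho}(K)$ and their inverses. Each factor from $\Ee_\pi(L)$ lies in $\EE(L)\subseteq\widehat{\EE}(L)$: take $A=I_\pi\in\F\P(L)$ in the definition, noting $I_\pi\in\Pe_{\pi\times\pi}(K)$. Every other factor lies in $\F\P(L)$. Hence $A$ is a composition of morphisms from the two subgroupoids $\widehat{\EE}(L)$ and $\F\P(L)$.

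Second, observe that $\widehat{\EE}(L)$ is normalised by $\F\P(L)$. Let $X\in\F\P_{\pi\times\rho}(L)$ and let $Y\in\widehat{\EE}_\rho(L)$ be a product of generators $B_iE_iB_i^{-1}$ with $B_i\in\F\P(L)$ and $E_i\in\EE(L)$. Then
\[XYX^{-1}=\prod_i (XB_i)E_i(XB_i)^{-1},\]
and each $XB_i\in\F\P(L)$, so $XYX^{-1}\in\widehat{\EE}(L)$. Note also that every morphism of $\widehat{\EE}(L)$ has equal source and target, since each of its generators does. So $\widehat{\EE}(L)$ is a union of groups $\widehat{\EE}_\rho(L)$.

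Third, move all $\widehat{\EE}$-factors to the left. In the product $A_1\cdots A_k$, whenever a factor $C\in\F\P(L)$ stands immediately left of a factor $Y\in\widehat{\EE}(L)$, rewrite $CY=(CYC^{-1})C$, where $CYC^{-1}\in\widehat{\EE}(L)$ by the second step. Finitely many such rewrites give $A=B'C'$, where $B'$ is a composition of $\widehat{\EE}$-factors, so $B'\in\widehat{\EE}_\pi(L)$ for some $\pi$, and $C'\in\F\P_{\pi\times\rho}(L)$. Since $A\in\GL_\sigma(L)$, the target of $B'$ is $\sigma$, so $\pi=\sigma$. Because $B'$ is an isotropy morphism, the target of $C'$ is also $\sigma$, and its source is the source of $A$, namely $\sigma$. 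Thus $B:=B'\in\widehat{\EE}_\sigma(L)$ and $C:=C'\in\F\P_\sigma(L)$, as required.

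The only point needing care is the bookkeeping of sources and targets, that is, checking that the $\widehat{\EE}$-part is an isotropy morphism. This is handled by the observation in the second step, so I expect no real obstacle.
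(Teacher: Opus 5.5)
Your proposal is correct and follows the paper's proof. Like the paper, you write $A$ via Theorem~\ref{thm:hlpa2} as a product of factors from $\widehat{\EE}(L)$ and $\F\P(L)$, use that $\widehat{\EE}(L)$ is normalised by $\F\P(L)$ to move the $\widehat{\EE}$-factors to the left, and then read off sources and targets. You additionally supply the details the paper leaves as ``clearly'': that $I_\pi\in\Pe_{\pi\times\pi}(K)$, the explicit normalisation computation, and that every morphism of $\widehat{\EE}(L)$ is an isotropy morphism.
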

\begin{proof}
By Theorem \ref{thm:hlpa2} a matrix $A\in \GL_\sigma(L)$ can be written as a composition of matrices from $\widehat{\EE}(L)$ and $\F\P(L)$. Clearly $\widehat{\EE}(L)$ is normalised by $\F\P(L)$, i.e. if $X\in \F\P(L)$ and $Y\in \widehat{\EE}(L)$ are composable, then $XYX^{-1}\in  \widehat{\EE}(L)$. It follows that $A$ can be written as $A=BC$ where $B\in \widehat{\EE}_\sigma(L)$ and $C\in \F\P_{\sigma}(L)$.
\end{proof}

\section*{Conflict of interest}
The author declares that there is no conflict of interest.

\section*{Data availability}
No datasets were generated or analysed.

\end{document}